\documentclass[preprint,3p,times]{elsarticle}
\usepackage[toc,page,title,titletoc,header]{appendix}
\usepackage{algorithm}
\usepackage{algorithmic}
\usepackage{amsmath}
\usepackage{amssymb}
\usepackage{amsthm}
\usepackage{enumerate}
\usepackage{enumitem}
\usepackage{mathrsfs}
\usepackage{graphicx}
\newtheorem{example}{Example}
\usepackage{tikz,pgfplots,tikz-3dplot}
\usepackage{microtype}
\usepackage{natbib}
\usetikzlibrary{fit}
\biboptions{sort&compress}

\newtheorem{thm}{Theorem}[section]
\newtheorem{lemma}{Lemma}[section]

\newtheorem{rem}{Remark}
\renewcommand{}

\newcommand{\bI}{\mathbb{I}}

\newcommand{\mS}{\mathcal{S}}

\usepackage{cases}

\def\epsilon{\varepsilon} 
\newcommand{\mat}[1]{\boldsymbol{#1}}

\allowdisplaybreaks

\begin{document}
\begin{frontmatter}
\title{
{Adaptive moving mesh methods for isotropic/anisotropic mean curvature flow with axisymmetric geometry}
}

\author[1]{Yiming Wang}
\author[1]{Meng Li*}
\address[1]{School of Mathematics and Statistics, Zhengzhou University,
Zhengzhou 450001, China}
\ead{limeng@zzu.edu.cn}

\begin{abstract}
This paper introduces adaptive moving mesh methods for the numerical simulation of axisymmetric mean curvature flow, addressing both isotropic and anisotropic cases. The methods are developed within the framework of the mesh equidistribution principle, where a carefully designed tangential velocity is employed to dynamically redistribute mesh points during the evolution. To accurately capture the key geometric features of the evolving interfaces, we select monitor functions based on the curvature $\varkappa$, its arc-length derivative $\varkappa_s$, and the squared curvature $\varkappa^2$. These monitor functions can be flexibly tailored to suit different problem settings and play a vital role in determining the resulting mesh quality and numerical accuracy. Spatial discretization is performed using central finite differences, while temporal integration is handled with first- and second-order time-stepping schemes, including the BDFk ($k=1,2$) and Crank-Nicolson methods. Additionally, a Lagrange multiplier approach is incorporated into the adaptive system to enforce the underlying geometric constraint, resulting in energy-stable numerical schemes.
Numerical experiments confirm the convergence and energy stability of the proposed methods. More importantly, the results clearly show that the proposed methods offer significant advantages in complex geometric evolutions: by utilizing appropriately designed monitor functions, the adaptive methods achieve dynamic redistribution of mesh points, efficiently capturing localized geometric features, significantly improving numerical accuracy, and effectively preventing mesh degeneration, particularly in anisotropic cases.

\end{abstract} 

\begin{keyword} 
isotropic/anisotropic mean curvature flow, adaptive moving mesh method, tangential redistribution, monitor function
\end{keyword}

\end{frontmatter}

\section{Introduction}\setcounter{equation}{0}
In numerous physical and engineering phenomena, interfaces evolve over time under the influence of surface tension to minimize the total surface energy. Typical examples include grain boundary motion in materials science, morphological evolution of thin films during solid-state dewetting, and interface smoothing in multiphase flows. A mathematical framework that characterizes such surface-tension-driven motion is the mean curvature flow (MCF), in which the normal velocity of a surface is proportional to its mean curvature. 
As one of the fundamental geometric evolution equations, the MCF has been widely applied in various scientific domains, including physics, image processing, and materials science.
For comprehensive introductions to the theory and analysis of MCF, we refer to \cite{ecker2004regularity,mantegazza2011lecture} and references therein.

In this paper, we consider a family of closed hypersurfaces $(\mS(t))_{t \geq 0} \subset \mathbb{R}^3$, whose evolution is governed by the geometric law: 
\begin{align}
    V_{n} = \mathcal{H}\quad \text{on } \mS(t)
\end{align}
where $V_{n}=\partial_t\mat X\cdot\mat n_{\mS}$ denotes the normal velocity of $\mS(t)$ with $\mat n_{\mS}$ the normal vector of the surface $\mS(t)$,  and $\mathcal{H}$ represents its mean curvature. 
This evolution can be regarded as the $L^2$–gradient flow of the surface area functional 
$A(t)=\int_{\mS(t)} 1 \text{d}\mS$, derived from the first variation of the interfacial energy. To simulate this curvature-driven evolution, numerous numerical techniques have been developed over the past few decades. A comprehensive overview of these methods can be found in \cite{deckelnick2005computation, dziuk2013finite}. In general, existing approaches fall into three primary formulations: parametric, level-set, and phase-field methods. Among them, parametric finite element methods (PFEMs) provide high accuracy in evaluating geometric quantities and have been widely applied to the MCF \cite{dziuk1990algorithm, barrett2008parametric, m2017approximations, kovacs2019convergent}. On the other hand, level-set techniques \cite{osher1988fronts, giga2006surface} naturally accommodate topological changes, while diffusion-generated schemes \cite{ruuth1998efficient} provide a computationally efficient curvature-regularized evolution. Additionally, phase-field models \cite{mullins1956two} have been adopted to describe interface morphology, offering a flexible framework for complex phenomena. 
Furthermore, numerical frameworks have been extended to simulate other geometric evolution equations, such as volume-preserving MCF \cite{athanassenas1997volume, ruuth2003simple, bao2022volume}, surface diffusion \cite{bernoff1998axisymmetric, barrett2019finite, bao2021structure} and Willmore flow \cite{mayer2002numerical, deckelnick2011error, barrett2021stable}.

In anisotropic materials, the surface energy is always direction-dependent and varies with the interface orientation. Mathematically, the anisotropic surface energy of an interface \( \mS(t) \) can be expressed as the functional
\begin{align} \label{eq:AMCF_energy}
	W(t) = \int_{\mS(t)} \gamma(\mat n_{\mS}) \, \mathrm{d}\mS,
\end{align}
where \( \gamma(\mat n_{\mS}) > 0 \) characterizes the orientation-dependent surface energy density.
From this energy functional, the interface evolves via the $L^2$-gradient flow
\begin{align}\label{eqn:vt}
	V_{n} = \mathcal{H}_\gamma\quad \text{on } \mS(t),
\end{align}
where $\mathcal{H}_\gamma$ refers to the weighted mean curvature. 
Numerical studies of the anisotropic MCF have attracted significant attention in recent decades. From a theoretical perspective, variational formulations based on anisotropic surface energies, such as the Finsler-type framework of Bellettini and Paolini~\cite{bellettini1996anisotropic}, and subsequent analytical developments~\cite{giga2006surface, chen1999uniqueness} have contributed to the understanding of anisotropic curvature-driven evolution. On the computational side, the PFEMs constitute the predominant numerical approach. Numerical schemes for weighted curvature evolution were investigated in~\cite{deckelnick1999discrete, deckelnick2002fully}, and structure-preserving numerical schemes, in particular parametric finite element approximations, were developed to enhance stability in the presence of anisotropy~\cite{barrett2008numerical, barrett2008variational, bao2024structure}. More recent developments have focused on improving geometric accuracy and handling curvature concentration, highlighting the importance of appropriate discretization strategies in practical simulations.

In numerous practical scenarios, the evolving surface exhibits rotational symmetry, allowing the geometric flow to be reformulated as a one-dimensional problem. This dimensional reduction facilitates theoretical analysis (see, e.g.,~\cite{dziuk1991rotational, athanassenas1997volume, bernoff1998axisymmetric}) and contributes to enhanced numerical tractability. 
Early analytical studies focused on rotationally symmetric surfaces, including investigations of geometric behavior and singularity formation~\cite{dziuk1991rotational, athanassenas1997volume}. Subsequent extensions addressed boundary conditions and stability aspects~\cite{matioc2007boundary, hartley2016stability}.
 Although Jeffres~\cite{jeffres2009gauss} considered Gauss curvature flow, his analysis under axisymmetric settings remains relevant to understanding similar evolution mechanisms. 
On the numerical side, one of the earliest computational studies in axisymmetric curvature-driven evolution was conducted by Mayer and Simonett~\cite{mayer2002numerical} using a finite-difference discretization, where reduced accuracy near the symmetry axis was observed due to geometric degeneracy in cylindrical coordinates. Subsequent finite-difference formulations have been explored in related curvature-driven problems~\cite{osher1988fronts, sethian1999level, smereka2003semi, kimura1994accurate, ruuth1998efficient}, suggesting their potential under regularized parameterizations. Alternatively, the parametric finite element framework introduced by Dziuk~\cite{dziuk1990algorithm} laid a foundation for surface evolution approximations. Building on this formulation, Deckelnick and collaborators~\cite{deckelnick2003error, deckelnick2011error} developed weak approximations for axisymmetric surface diffusion and Willmore flow, and later Barrett, Garcke, and Nürnberg~\cite{barrett2019variational} proposed variational discretizations that achieve enhanced stability for axisymmetric curvature evolution. 
In~\cite{deckelnick2021error}, Deckelnick and N\"urnberg studied
finite difference approximations for isotropic axisymmetric MCF 
and derived corresponding error estimates for genus-$0$ surfaces.
Existing studies in the literature have primarily focused on isotropic axisymmetric MCF, including its theoretical analysis and numerical simulation. In contrast, research on anisotropic axisymmetric MCF, encompassing both theoretical investigation and numerical methods, remains largely unexplored and has not yet been systematically developed. 
In the work of Li and Zhao~\cite{li2024parametric}, an anisotropic surface diffusion model with axisymmetric geometry was formulated, and corresponding structure-preserving PFEMs were developed.
Subsequently, Li and Zhou~\cite{li2025structure} extended this framework to the axisymmetric anisotropic solid-state dewetting and proposed a structure-preserving parametric finite element approximation that preserves both volume conservation and energy stability. 
Inspired by these developments, the present work investigates the axisymmetric MCF with anisotropic effects and accordingly develops an appropriate numerical methodology.

In this study, for the sake of brevity, we only consider the MCF with a genus-1 axisymmetric structure; however, the adaptive moving mesh method we develop in this work is also applicable to other boundary conditions. Specifically, we denote $\bI=\mathbb R/\mathbb Z$ with $\partial \bI=\emptyset$. Assuming rotational symmetry with respect to the \(x_2\)-axis (see Fig.~\ref{Fig.1}), the evolving surface \(\mathcal{S}(t)\) can be generated by rotating a planar curve
\[
\mat{X}(\rho,t) = \left(r(\rho,t),\, z(\rho,t)\right)^{T},
\qquad \rho \in \mathbb{I},
\]
around the \(x_2\)-axis. 
Obviously, under the assumption, we have 
\begin{align*}
	 \mat{X}(\rho,t)\cdot\mat{e}_{1} > 0 
  \quad \forall\,\rho\in \overline{\mathbb{I}},
\end{align*}
where $\mat{e}_{1}=(1,0)^{T}$. The planar curve \(\Gamma(t) = \mat{X}(\mathbb{I}, t)\) is referred to as the generating curve of the axisymmetric surface. The associated surface parametrization is given by
\[
(\rho, \theta, t) \mapsto 
\left(
r(\rho,t)\cos\theta,\,
r(\rho,t)\sin\theta,\,
z(\rho,t)
\right)^{T},
\qquad \theta \in [0, 2\pi].
\]
\begin{figure} [!t]
    \centering
    \includegraphics[width=0.9\linewidth]{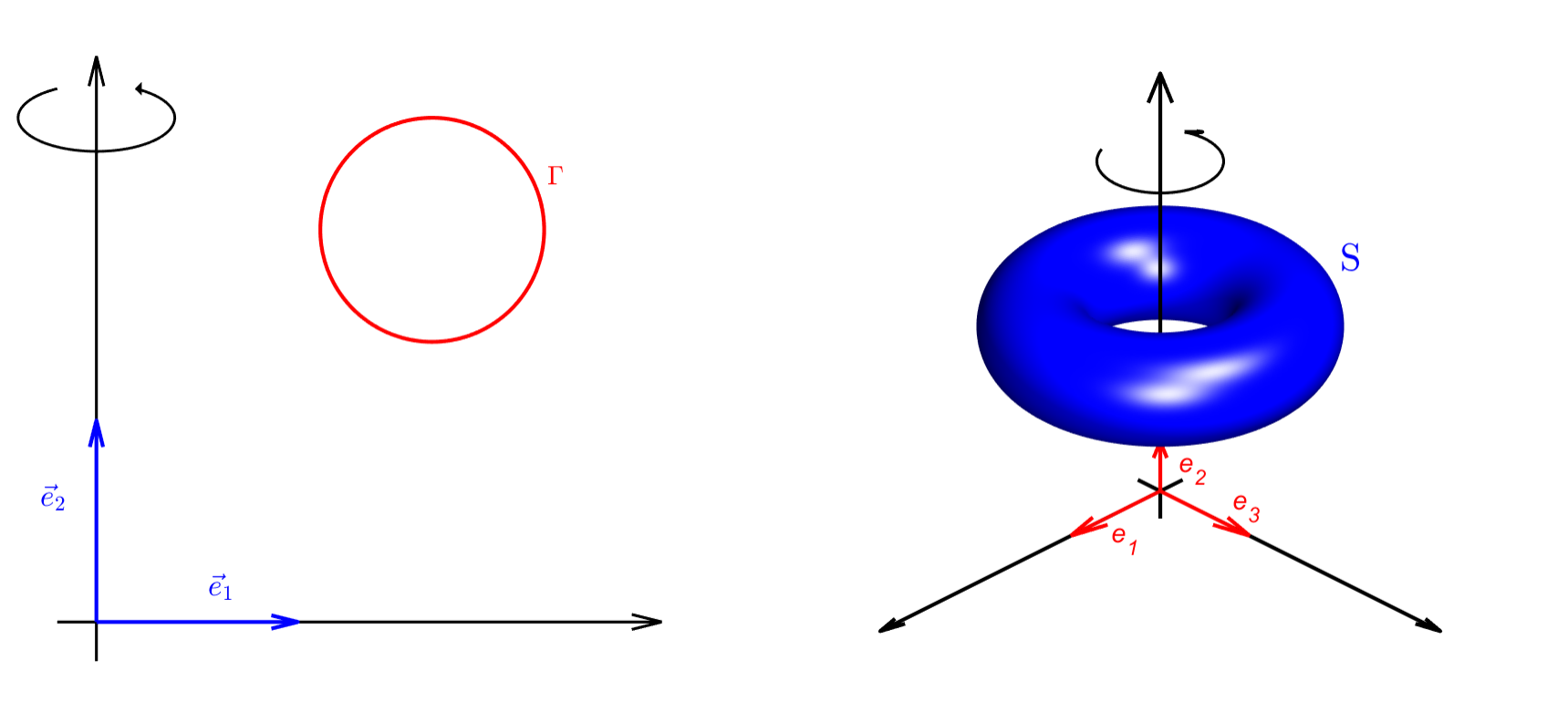}
    \caption{Sketch of $\Gamma$ and $S$, as well as the unit vectors $\mat e_1$, $\mat e_2$, and $\mat e_3$.}
    \label{Fig.1}
\end{figure}
Let $s$ denote the arc-length parameter along $\Gamma(t)$, and let
$\mat{n}$ be the associated unit normal. For the isotropic axisymmetric MCF, the normal velocity of the generating curve satisfies~\cite{barrett2019variational}
\begin{align}\label{eqn:iso_model}
     \partial_{t} \mat{X}\cdot\mat{n}
    =
    \varkappa
    -
    \frac{\mat{n}\cdot\mat{e}_{1}}
         {\mat{X}\cdot\mat{e}_{1}},
\end{align}
where $\varkappa=\partial_{ss}\mat{X}\cdot\mat{n}$ denotes the curvature of the 
generating curve $\Gamma(t)$.

We further assume that the anisotropy function $\gamma(\mat n_{\mS})$ is independent of the azimuthal angle and thus can be reduced to a function of $\mat n$, which we still denote by $\gamma(\mat n)$ with a slight abuse of notations. In this case,
the
normal velocity of the generating curve satisfies~\cite{li2024parametric}
\begin{align}\label{eq:aniso_model}
    \partial_{t} \mat{X}\cdot\mat{n}
    =
    \varkappa_{\gamma}
    -
    \frac{\left[\gamma(\theta)\mat n+\gamma'(\theta)\mat{\tau}\right]\cdot\mat{e}_{1}}
         {\mat{X}\cdot\mat{e}_{1}},
\end{align}
where $\gamma(\theta):=\gamma(\mat{n})>0$ denotes the surface energy density of the curve, and 
the anisotropic curvature of $\Gamma(t)$ is defined by
\begin{align}\label{eq:H_gamma}
    \varkappa_{\gamma}=
\left(\gamma(\theta)+\gamma''(\theta)\right)\varkappa.
\end{align}

Despite the considerable progress in improving the stability and accuracy of numerical schemes for curvature-driven interface evolution, traditional methods still require highly refined uniform meshes to achieve reliable solutions in the presence of strong curvature variations or sharp morphological changes, resulting in excessive computational cost. Adaptive moving mesh methods have therefore emerged as an effective strategy for enhancing computational efficiency while maintaining high accuracy. 
Huang, Ren, and Russell \cite{huang1994moving} systematically developed the moving mesh partial differential equation (PDE) framework based on the equidistribution principle, which is widely regarded as an important foundation of the moving mesh methods.  
Beyond the moving mesh PDE framework, the variational mesh optimization approach of Tourigny and Hulsemann \cite{tourigny1998new} offers an alternative viewpoint by formulating mesh adaptation as the minimization of a discrete energy functional. The harmonic map-based method of Li, Tang, and Zhang \cite{LI2001562} provides another direction, constructing adaptive meshes via iterative approximation of harmonic mappings and producing smooth, high-quality meshes in multidimensional settings. 
Mackenzie et al.\ \cite{mackenzie2019adaptive} employed an adaptive finite difference method (FDM) for the forced curve shortening flow by adopting a tangential redistribution strategy with a curvature-based monitor function, thereby accurately resolving highly localized geometric features while maintaining a relatively small number of mesh points.
For a comprehensive overview of these methodologies, including their theoretical foundations, practical implementation strategies, and several representative applications, we refer the reader to \cite{budd2009adaptivity, huang2010adaptive}. 
Despite these advances, extending adaptive moving mesh techniques to higher-dimensional or anisotropic geometric flow problems remains highly challenging, which motivates the developments presented in this work. In addition, the construction of energy-stable adaptive moving mesh methods is itself an important problem, further underscoring the need for our study.

In this work, we develop several adaptive moving mesh methods for simulating the axisymmetric MCF, considering both isotropic and anisotropic cases. The evolution of the generating curve is expressed through its normal and tangential components as follows:
\begin{equation}\label{eq:velocity-decomposition}
	\partial_t \mat X = \left(\partial_t \mat X \cdot \mat n\right)\mat n + \left(\partial_t \mat X \cdot \mat\tau\right)\mat\tau,
\end{equation}
where the tangential velocity is derived as the gradient flow of a mesh energy functional. This functional incorporates monitor functions based on the curvature $\varkappa$, its arc-length derivative $\varkappa_s$, and the squared curvature $\varkappa^2$. These monitor functions enable the mesh points to automatically concentrate in regions with significant geometric variation, while preserving the normal component of the axisymmetric MCF.
Building upon the new coupled system based on the total velocity equation, we develop adaptive moving mesh methods by employing central finite differences for spatial discretization and using first- and second-order implicit time integrators (BDFk, $k=1,2$, and Crank-Nicolson). The results demonstrate a flexible and accurate discretization method capable of addressing curvature-induced stiffness and effectively capturing localized features on dynamically evolving axisymmetric surfaces.
As an enhancement to the adaptive method, and inspired by the approach in \cite{garcke2025structure}, we further develop several energy-stable methods based on a Lagrange multiplier formulation. These methods ensure the monotonic decay of the discrete energy, significantly improving the robustness of the fully discrete algorithm. Numerical experiments confirm the convergence and energy stability of the proposed methods. More importantly, the results clearly demonstrate that the adaptive methods offer substantial advantages in handling complex geometric evolutions. By utilizing well-designed monitor functions, the adaptive moving mesh methods presented in this work facilitate dynamic mesh point redistribution, effectively capturing localized geometric features. This greatly enhances numerical accuracy while preventing mesh degeneration. Specifically, for anisotropic curvature evolution problems, the adaptive methods maintain high mesh quality, ensuring stable and accurate long-term numerical simulations of the evolution process.

The paper is structured as follows. Section 2 introduces a new continuous coupling system for the isotropic axisymmetric MCF, and based on this system, several adaptive finite difference approximations are established using the BDFk and Crank-Nicolson methods for time discretization. Furthermore, Section 2 extends the framework by developing area-decay methods based on the adaptive moving mesh approach.
Section 3 extends the adaptive moving mesh method for isotropic problems to anisotropic MCF and further establishes energy-stable methods. Section 4 focuses on the design principles of the curvature-based monitor function employed throughout the work. Section 5 presents numerical results illustrating the performance of the proposed methods. Finally, Section 6 concludes with remarks and future directions.


\section{Numerical approximations for the isotropic axisymmetric MCF}\label{sec2}
In this section, we will first introduce a new continuous coupling system for the isotropic axisymmetric MCF and, based on this, establish several adaptive finite difference approximations by using the BDFk and Crank-Nicolson methods in time.
\subsection{The strong formulation}
We introduce the arc-length coordinate $s(\rho,t) = \int_{0}^{\rho} \left|\partial_{q}\mat X(q,t)\right|\,\mathrm{d}q$ with $s \in [0,L]$, 
so that the corresponding arc-length element is $\mathrm{d}s = \left|\partial_\rho\mat X(\rho,t)\right|\,\mathrm{d}\rho, \partial_{s} = \frac{1}{\left|\partial_\rho\mat X(\rho,t)\right|}\,\partial_{\rho}.$
The unit tangent and normal vectors associated with the curve $\Gamma(t)$ are defined by
\begin{equation} \label{eq:tau_continue}
	\mat \tau(\rho,t) := \partial_{s} \mat{X}(\rho,t)
	= \frac{\partial_\rho\mat X(\rho,t)}{\left|\partial_\rho\mat X(\rho,t)\right|},
	\qquad
	\mat n(\rho,t) := -\mat \tau^{\perp}(\rho,t),
\end{equation}
where $(\cdot)^{\perp}$ denotes the clockwise rotation of a vector by $\frac{\pi}{2}$.  
The axisymmetric MCF can be written as the following
second-order geometric system. Given $\mat X(\rho,0)=\mat X_0(\rho)$, we find
$\left(\mat X(\rho,t),\,\varkappa(\rho,t)\right)$, $(\rho,t)\in\mathbb I\times(0,+\infty)$,
such that
\begin{subequations}\label{IAMCF}
	\begin{align}
		&\label{eq:x_t.n}\partial_t \mat X \cdot \mat{n} = \varkappa
		- \frac{\mat n\cdot\mat e_{1}}{\mat X\cdot\mat e_{1}}, \\[4pt]
		&\label{eq:hn}\varkappa\,\mat n= \partial_{ss}\mat X
		= \frac{\partial_{\rho\rho}\mat X}{\left|\partial_{\rho}\mat X\right|^{2}}
		-\frac{\mat \tau\cdot\partial_{\rho\rho}\mat X}{\left|\partial_{\rho}\mat X\right|^{3}}\,
		\partial_{\rho}\mat X .
	\end{align}    
\end{subequations}
The numerical approximations based on the original system \eqref{IAMCF} typically exhibit good asymptotic equidistribution grid quality. To account for the adaptive effects related to curvature, we further investigate a new coupled system, which incorporates a tangential velocity component by introducing a monitor function related to the curvature $\varkappa$, its arc-length derivative $\varkappa_s$, and the squared curvature $\varkappa^2$. To this end, motivated by \cite{huang1994moving,huang2010adaptive,tourigny1998new,mackenzie2019adaptive}, we recall a tangential velocity equation by adopting the first variation of the mesh energy.
\begin{lemma}\label{lemma:tangential}
    Let \(\Gamma(t)\) be a smooth closed curve parametrized by the arc-length coordinate \(s\), and let \(\rho = \rho(s,t)\) be a reparametrization function. Given a strictly positive monitor function \(M(\rho,t)\), define the mesh energy by
    \begin{equation}\label{eq:mesh-energy}
        I[\rho(t)] = \frac{1}{2} \int_{\Gamma(t)} \frac{1}{M(\rho,t)} \left( \partial_s \rho(s,t) \right)^{2} \, \mathrm{d}s.
    \end{equation}
    An evolution equation is obtained from the gradient flow equation
    \begin{equation}\label{eq:rho-evolution}
        \partial_t \rho = -\frac{P}{\mathcal{J}} \frac{\delta I}{\delta \rho},
    \end{equation}
    where \(\mathcal{J} > 0\) is a relaxation time constant, and \(P\) is a positive operator. Then, the total velocity is given by
    \begin{equation}\label{eq:total-velocity}
      \partial_t \mat X
		= V\mat n
		+ \frac{P}{\mathcal J}\,
		\left(M\left|\partial_\rho \mat X\right|\right)^{-2}
		\,\partial_\rho\!\left(M\left|\partial_\rho \mat X\right|\right)\,\mat\tau.
    \end{equation}
\end{lemma}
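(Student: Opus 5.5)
Regard $\rho=\rho(s,t)$ as the unknown. First compute the variational derivative of $I$ in \eqref{eq:mesh-energy}, then convert the resulting evolution of $\rho$ at fixed $s$ into a tangential velocity of $\mat X$ at fixed $\rho$ by differentiating the identity $\mat X(\rho(s,t),t)=\mat Y(s,t)$, where $\mat Y$ is the arc-length parametrization. For the first step, perturb $\rho\mapsto\rho+\epsilon\phi$ with $\phi$ periodic on $[0,L]$. The term $1/M$ also depends on $\rho$, but following the standard moving-mesh convention of \cite{huang1994moving,huang2010adaptive} we freeze the monitor while taking the variation, i.e.\ treat $M$ as a given weight in $s$. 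Integrating by parts with no boundary terms, since $\Gamma(t)$ is closed, gives
\[
\frac{\delta I}{\delta\rho}=-\partial_s\!\left(\frac{1}{M}\partial_s\rho\right),
\qquad
\partial_t\rho=\frac{P}{\mathcal J}\,\partial_s\!\left(\frac{1}{M}\partial_s\rho\right).
\]

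Next, express this in terms of $\rho$ derivatives. We have $\partial_s\rho=1/|\partial_\rho\mat X|$, so $\frac1M\partial_s\rho=(M|\partial_\rho\mat X|)^{-1}$. Applying $\partial_s=|\partial_\rho\mat X|^{-1}\partial_\rho$ gives
\[
\partial_s\!\left(\frac{1}{M}\partial_s\rho\right)=-\frac{1}{|\partial_\rho\mat X|}\,(M|\partial_\rho\mat X|)^{-2}\,\partial_\rho\!\left(M|\partial_\rho\mat X|\right).
\]
Thus $\partial_t\rho|_s=-\frac{P}{\mathcal J}|\partial_\rho\mat X|^{-1}(M|\partial_\rho\mat X|)^{-2}\partial_\rho(M|\partial_\rho\mat X|)$.

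For the second step, differentiate $\mat X(\rho(s,t),t)=\mat Y(s,t)$ in $t$ at fixed $s$. This gives $\partial_t\mat X|_\rho=\partial_t\mat Y|_s-\partial_\rho\mat X\,\partial_t\rho|_s$. The geometric evolution prescribes only the normal part of the motion, which is $V\mat n$. We also take the arc-length parametrization to carry no additional tangential motion of its own, which is the modelling choice implicit in the lemma. Then, using $\partial_\rho\mat X=|\partial_\rho\mat X|\mat\tau$,
\[
\partial_t\mat X=V\mat n-|\partial_\rho\mat X|\,\partial_t\rho\,\mat\tau
=V\mat n+\frac{P}{\mathcal J}(M|\partial_\rho\mat X|)^{-2}\partial_\rho(M|\partial_\rho\mat X|)\,\mat\tau .
\]
This is \eqref{eq:total-velocity}; the factors of $|\partial_\rho\mat X|$ cancel.

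The main obstacle is making these two conventions precise: that $M$ is held fixed in the variation, and that the tangential part of $\partial_t\mat Y$ is discarded. Both hold the monitor and the arc-length frame fixed, as in \cite{mackenzie2019adaptive}, and the resulting sign must be checked against \eqref{eq:total-velocity}. I would state both explicitly as part of the derivation, since the lemma is a formal derivation rather than an identity holding for every $\mat Y$.
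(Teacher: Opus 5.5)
Your proposal is correct and follows essentially the same route as the paper: you freeze $M$ in the variation, integrate by parts on the closed curve to get $\delta I/\delta\rho=-\partial_s(M^{-1}\partial_s\rho)$, rewrite this in $\rho$-derivatives, and convert $\partial_t\rho|_s$ into a tangential velocity. Differentiating $\mat X(\rho(s,t),t)=\mat Y(s,t)$ makes explicit the chain-rule step $\partial_t s|_\rho=-|\partial_\rho\mat X|\,\partial_t\rho|_s$ and the modelling assumption (no intrinsic tangential motion of the arc-length frame), both of which the paper leaves implicit when it writes $\partial_t\mat X\cdot\mat\tau=\partial_t s$.
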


\begin{proof}
    We first compute the first variation of the mesh energy \eqref{eq:mesh-energy}. For a perturbation \(\rho^\varepsilon = \rho + \varepsilon \eta\), where \(\eta\) is a smooth function, and treating \(M(\rho,t)\) as fixed during the relaxation step, we write
    \begin{align}\label{lemma:tangential_pf1}
        I\left[\rho^\varepsilon\right]
        = \frac12 \int_{\Gamma(t)} \frac{1}{M} \left|\partial_s \left( \rho + \varepsilon \eta \right)\right|^2 \mathrm{d}s
        = \frac12 \int_{\Gamma(t)} \frac{1}{M} \left( \left|\partial_s \rho\right|^2 + 2\varepsilon\, \partial_s \rho \,\partial_s \eta + \varepsilon^2 \left|\partial_s \eta\right|^2 \right) \mathrm{d}s.
    \end{align}
    Differentiating \eqref{lemma:tangential_pf1} at \(\varepsilon = 0\) yields
    \begin{equation}\label{lemma:tangential_pf2}
        \left.\frac{\mathrm{d}}{\mathrm{d}\varepsilon} I\left[\rho^\varepsilon\right]\right|_{\varepsilon=0}
        = \int_{\Gamma(t)} \frac{1}{M} \, \partial_s \rho \, \partial_s \eta \, \mathrm{d}s.
    \end{equation}
    Since \(\Gamma(t)\) is a closed curve, we use integration by parts in \eqref{lemma:tangential_pf2}, which gives
    \begin{align}\label{lemma:tangential_pf3}
        \left.\frac{\mathrm{d}}{\mathrm{d}\varepsilon} I\left[\rho^\varepsilon\right]\right|_{\varepsilon=0}
        = -\int_{\Gamma(t)} \eta \, \partial_s \left( \frac{1}{M} \partial_s \rho \right) \mathrm{d}s.
    \end{align}
    Hence, we have 
    \begin{equation}\label{lemma:tangential_pf4}
        \frac{\delta I}{\delta \rho} = -\partial_s \left( \frac{1}{M} \partial_s \rho \right).
    \end{equation}
    Substituting \eqref{lemma:tangential_pf4} into \eqref{eq:rho-evolution} gives
    \begin{equation}\label{lemma:tangential_pf5}
        \partial_t \rho = \frac{P}{\mathcal{J}} \, \partial_s \left( \frac{1}{M} \partial_s \rho \right).
    \end{equation}
    Using the relation
    \[
    \partial_\rho s = \left|\partial_\rho \mat{X}\right| = \left(\partial_s \rho\right)^{-1},
    \]
    along with the identity \(\partial_s = \left|\partial_\rho \mat{X}\right|^{-1} \partial_\rho\), we obtain
    \begin{align}\label{lemma:tangential_pf6}
        \partial_t \rho = \frac{P}{\mathcal{J}} \, \frac{1}{\left|\partial_\rho \mat{X}\right|} \partial_\rho \left( \frac{1}{M \left|\partial_\rho \mat{X}\right|} \right).
    \end{align}
    From \eqref{lemma:tangential_pf6}, we obtain the tangential component
    \begin{equation}\label{eq:tangential-final}
        \partial_{t} \mat{X} \cdot \mat \tau = \partial_t s = \frac{P}{\mathcal{J}} \,
        \left(M \left|\partial_\rho \mat{X}\right|\right)^{-2} \partial_\rho \left( M \left|\partial_\rho \mat{X}\right| \right),
    \end{equation}
    which, together with the normal velocity equation \eqref{eq:x_t.n}, leads to the total velocity equation \eqref{eq:total-velocity}. Therefore, the proof is complete.
\end{proof}

From the vector equation \eqref{eq:hn}, we can deduce the following scalar equation
\begin{align}\label{eqn:scalar}
    \varkappa = \frac{\partial_{\rho \rho} \mat X\cdot\mat n}{\left|\partial_{\rho}\mat X\right|^2}.
\end{align}
From \eqref{eq:total-velocity} and  \eqref{eqn:scalar}, we derive the following new coupled system, which is to find $\mat X$, $V$ and $\varkappa$, such that 
\begin{subequations}\label{eq:isotropic-AMCF-system}
	\begin{align}
		&\label{eq:iso_x_t}
        \partial_t \mat X
		= V\mat n
		\;+\;
		\frac{P}{\mathcal J}\,
		\left(M\left|\partial_\rho \mat X\right|\right)^{-2}
		\,\partial_\rho\!\left(M\left|\partial_\rho \mat X\right|\right)\,\mat\tau ,\\[4pt]
		&\label{eq:V}V = \left(
		\varkappa - \frac{\mat n \cdot \mat e_1}{\mat X \cdot \mat e_1}
		\right) , \\[4pt]
		&\varkappa = \frac{\partial_{\rho \rho} \mat X\cdot\mat n}{\left|\partial_{\rho}\mat X\right|^2}.
	\end{align}
\end{subequations}

The area of the axisymmetric surface, determined by the generating curve governed by the system \eqref{eq:isotropic-AMCF-system}, can be demonstrated to be dissipative.
\begin{lemma}
	\label{lem:area_dissipation}
	Let $A(t)$ denote the surface area of the axisymmetric surface generated by
	$\Gamma(t)$ governed by \eqref{eq:isotropic-AMCF-system}. Then, $A(t)$ is dissipative in the sense that 
	\begin{equation}\label{eqn:area_dissipation}
		\frac{\mathrm d}{\mathrm dt} A(t)
		=
		-\,2\pi \int^1_{0} (\mat X\cdot \mat e_1)\,V^{2}\,\left|\partial_\rho\mat X\right| \,\mathrm d\rho
		\leq 0.
	\end{equation}
\end{lemma}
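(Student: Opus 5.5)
Write the area as $A(t)=2\pi\int_0^1 (\mat X\cdot\mat e_1)\,\left|\partial_\rho\mat X\right|\,\mathrm d\rho$ and differentiate under the integral. We use $\partial_t\left|\partial_\rho\mat X\right| = \mat\tau\cdot\partial_\rho\partial_t\mat X$, which gives
\begin{equation*}
\frac{\mathrm d}{\mathrm dt}A(t) = 2\pi\int_0^1 \Bigl[(\partial_t\mat X\cdot\mat e_1)\left|\partial_\rho\mat X\right| + (\mat X\cdot\mat e_1)\,\mat\tau\cdot\partial_\rho\partial_t\mat X\Bigr]\mathrm d\rho .
\end{equation*}
Next we integrate the second term by parts in $\rho$; there are no boundary terms since $\bI$ is periodic. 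We then use $\partial_\rho\mat\tau = \left|\partial_\rho\mat X\right|\varkappa\,\mat n$ from \eqref{eq:hn} and $\partial_\rho(\mat X\cdot\mat e_1) = \left|\partial_\rho\mat X\right|\,\mat\tau\cdot\mat e_1$. This turns the derivative into
\begin{equation*}
\frac{\mathrm d}{\mathrm dt}A(t) = 2\pi\int_0^1 \partial_t\mat X\cdot\Bigl[\mat e_1 - (\mat\tau\cdot\mat e_1)\mat\tau - (\mat X\cdot\mat e_1)\varkappa\mat n\Bigr]\left|\partial_\rho\mat X\right|\mathrm d\rho .
\end{equation*}
In the orthonormal frame $\{\mat\tau,\mat n\}$ we have $\mat e_1-(\mat\tau\cdot\mat e_1)\mat\tau = (\mat n\cdot\mat e_1)\mat n$. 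So the bracket is purely normal and equals
\begin{equation*}
-(\mat X\cdot\mat e_1)\Bigl(\varkappa - \frac{\mat n\cdot\mat e_1}{\mat X\cdot\mat e_1}\Bigr)\mat n = -(\mat X\cdot\mat e_1)\,V\,\mat n .
\end{equation*}

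The key observation is that the tangential part of $\partial_t\mat X$ drops out. Only $\partial_t\mat X\cdot\mat n$ enters, and by \eqref{eq:iso_x_t} and \eqref{eq:V} it equals $V$, whatever monitor function $M$ and tangential term from Lemma~\ref{lemma:tangential} are used. Substituting yields $\frac{\mathrm d}{\mathrm dt}A = -2\pi\int_0^1(\mat X\cdot\mat e_1)V^2\left|\partial_\rho\mat X\right|\mathrm d\rho$. This is nonpositive because $\mat X\cdot\mat e_1>0$ by the standing axisymmetric assumption.

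The main obstacle is bookkeeping in the integration by parts, specifically keeping the sign convention $\mat n=-\mat\tau^\perp$ consistent with $\partial_s\mat\tau=\varkappa\mat n$. The scalar relation \eqref{eqn:scalar} and the vector relation \eqref{eq:hn} fix this sign, so I would check the frame identity against them once rather than recompute it. Smoothness of $\mat X$ and $\left|\partial_\rho\mat X\right|>0$ are assumed so that the differentiation under the integral is justified.
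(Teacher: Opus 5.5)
Your proof is correct and follows essentially the same route as the paper. Both differentiate $A(t)=2\pi\int_0^1(\mat X\cdot\mat e_1)|\partial_\rho\mat X|\,\mathrm d\rho$, integrate the $\mat\tau\cdot\partial_\rho\partial_t\mat X$ term by parts using periodicity and $\partial_\rho\mat\tau=\varkappa|\partial_\rho\mat X|\mat n$, remove the tangential part via $\mat e_1-(\mat\tau\cdot\mat e_1)\mat\tau=(\mat n\cdot\mat e_1)\mat n$, and insert $\partial_t\mat X\cdot\mat n=V$. Your sign-convention worry is harmless: $\partial_s\mat\tau=\varkappa\mat n$ is exactly how \eqref{eq:hn} defines $\varkappa$, and the frame identity holds for either choice of unit normal.
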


	\begin{proof}
		Differentiating the area with respect to time yields
		\begin{align} \label{eqn:area_dissipation_pf1}
			\frac{\mathrm d}{\mathrm dt}A(t)
			=
			2\pi\frac{\mathrm d}{\mathrm dt}\int^1_{0}\mat X\cdot \mat e_1\left|\partial_\rho\mat X\right|\mathrm d\rho
			=2\pi\int^1_{0}\partial_t\mat X\cdot \mat e_1\left|\partial_\rho\mat X\right|\mathrm d \rho+2\pi\int^1_{0}\mat X\cdot \mat e_1\frac{\partial}{\partial t}\left|\partial_\rho\mat X\right|\mathrm d\rho.
		\end{align}
		By using 
		\begin{align*}
			\frac{\partial}{\partial t}\left|\partial_\rho\mat X\right|=\partial_t\partial_\rho\mat X\cdot \mat \tau 
		\end{align*}
		in \eqref{eqn:area_dissipation_pf1}, we get 
		\begin{align}\label{eqn:area_dissipation_pf2}
			\frac{\mathrm d}{\mathrm dt}A(t)
			&=
			2\pi\int^1_{0}
			\partial_t\mat X\cdot \mat e_1
			\left|\partial_\rho\mat X\right|
			\,\mathrm d\rho
			+
			2\pi\int^1_{0}
			\left(\mat X\cdot \mat e_1\right)
			\partial_\rho\partial_t\mat X\cdot \mat\tau
			\,\mathrm d\rho .
		\end{align}
By applying integration by parts to the second term on the right-hand side of \eqref{eqn:area_dissipation_pf2}, we obtain
	\begin{align}\label{eqn:area_dissipation_pf3}
		\frac{\mathrm d}{\mathrm dt}A(t)
		&=
		2\pi\int^1_{0}
		\partial_t\mat X\cdot \mat e_1
		\left|\partial_\rho\mat X\right|
		\,\mathrm d\rho
		-
		2\pi\int^1_{0}
		\partial_\rho\!\left(
		\left(\mat X\cdot \mat e_1\right)\mat\tau
		\right)\cdot
		\partial_t\mat X
		\,\mathrm d\rho .
	\end{align}
	Here the boundary term vanishes due to periodicity for closed curves.
	Thanks to the relation $\partial_\rho\mat\tau
		= \varkappa\,\left|\partial_\rho\mat X\right| \mat n,$
		we have 
		\begin{align}\label{eqn:area_dissipation_pf4}
		   \partial_\rho\!\left(
		\left(\mat X\cdot \mat e_1\right)\mat\tau
		\right)
		=
		\left(\partial_\rho\mat X\cdot \mat e_1\right)\mat\tau
		+
		\left(\mat X\cdot \mat e_1\right)
		\varkappa
		\left|\partial_\rho\mat X\right|
		\mat n . 
		\end{align}
        Substituting \eqref{eqn:area_dissipation_pf4} into \eqref{eqn:area_dissipation_pf3}, and using \eqref{eq:iso_x_t}, we obtain
\begin{align}\label{eqn:area_dissipation_pf5}
\frac{\mathrm d}{\mathrm dt}A(t)
&=
2\pi\int^1_{0}
\partial_t\mat X\cdot \mat e_1
\left|\partial_\rho\mat X\right|
\,\mathrm d\rho
-
2\pi\int^1_{0}
\left(\partial_\rho\mat X\cdot \mat e_1\right)
\left(\partial_t\mat X\cdot\mat \tau\right)
\,\mathrm d\rho \nonumber
-
2\pi\int^1_{0}
\left(\mat X\cdot \mat e_1\right)
\varkappa
\left|\partial_\rho\mat X\right|
\left(\partial_t\mat X\cdot\mat n\right)
\,\mathrm d\rho \nonumber\\
&=
2\pi\int_0^{1}
\bigg[
\partial_t\mat X\cdot \mat e_1
-
\left(\mat\tau\cdot \mat e_1\right)
\left(\partial_t\mat X\cdot\mat\tau\right)
-
\left(\mat X\cdot \mat e_1\right)\varkappa
\left(\partial_t\mat X\cdot\mat n\right)
\bigg]\left|\partial_\rho\mat X\right|
\,\mathrm d\rho \nonumber\\
&=
2\pi\int_0^{1}
\left(\partial_t\mat X\cdot\mat n\right)
\left(\mat n\cdot \mat e_1\right)\left|\partial_\rho\mat X\right|
\,\mathrm d\rho
-
2\pi\int_0^{1}
\left(\mat X\cdot \mat e_1\right)
\varkappa
\left(\partial_t\mat X\cdot\mat n\right)\left|\partial_\rho\mat X\right|
\,\mathrm d\rho \nonumber\\
&=
2\pi\int_0^{1}
V
\bigg[
\mat n\cdot \mat e_1
-
\left(\mat X\cdot \mat e_1\right)\varkappa
\bigg]\left|\partial_\rho\mat X\right|
\,\mathrm d\rho .
\end{align}
		Finally, from \eqref{eq:V}, 
		we have
\begin{align}\label{eqn:area_dissipation_pf6}
    		\mat n\cdot \mat e_1
		-
		\left(\mat X\cdot \mat e_1\right)\varkappa
		=
		-\left(\mat X\cdot \mat e_1\right)V.
\end{align}
		Therefore, substituting \eqref{eqn:area_dissipation_pf6} into \eqref{eqn:area_dissipation_pf5}
 concludes that 
		\[
		\frac{\mathrm d}{\mathrm dt}A(t)
		=
		-\,2\pi
		\int^1_{0}
		\left(\mat X\cdot \mat e_1\right)
		V^{2}
		\left|\partial_\rho\mat X\right|
		\,\mathrm d\rho
		\le 0,
		\]
		which completes the proof.
	\end{proof}

\subsection{Numerical approximations}

Let $h=1/M$ and $\Delta t=T/N$ denote the spatial mesh width and the time step, respectively, where $M,N\in\mathbb{N}$.
We employ a uniform discretization of the parameter domain by setting $\rho_i=ih$ for $i=0,1,\dots,M-1$, and impose periodicity through the identification $\rho_M\equiv\rho_0$.
The temporal grid is given by $t^n=n\,\Delta t$ for $n=0,1,\dots,N$.
At each time level $t^n$, the discrete curve is represented by the nodal values $\mat X_i^n\approx \mat X(\rho_i,t^n)\in\mathbb{R}^2$ for $i=0,1,\dots,M-1$.
To define the finite difference operators, let \( f \) be a scalar-valued function sampled at the grid points \( \rho_i \) and assumed to satisfy periodicity, i.e., \( f_{i+M} = f_i \).
The forward, backward, and centered first-order differences, together with the
standard centered second-order difference used for curvature and other geometric
quantities, are defined by
\begin{align*}
	\delta_{\rho}^+ f_i &= \frac{f_{i+1}-f_i}{h}, 
	\qquad
	\delta_{\rho}^- f_i = \frac{f_i-f_{i-1}}{h}, \\[4pt]
	\delta_{\rho} f_i &= \frac{f_{i+1}-f_{i-1}}{2h},
	\qquad
	\delta_{\rho\rho} f_i = \frac{f_{i+1}-2f_i+f_{i-1}}{h^2}.
\end{align*}
All operators act componentwise when \(f\) is vector-valued.

In the discrete framework, the geometric quantities are evaluated through finite difference
approximations along the periodic parameter~\(\rho\).
The derivative of the parametrization with respect to \(\rho\) is approximated
by the centered difference
\[
\partial_\rho \mat X(\rho_i,t^n)
\;\approx\;
\delta_\rho \mat X_i^n
:=
\frac{\mat X_{i+1}^n - \mat X_{i-1}^n}{2h}.
\] 
The unit tangent and normal vectors are denoted by 
\begin{align*}
\boldsymbol{\tau}_i^n
= \frac{\delta_\rho \mat X_i^n}
{\left|\delta_\rho \mat X_i^n\right|};
\qquad
\boldsymbol{n}_i^n
= R\,\boldsymbol{\tau}_i^n,
\qquad
R =
\begin{pmatrix}
	0 & -1 \\
	1 & \;\;0
\end{pmatrix}.
\end{align*}
To obtain the fully discrete methods of \eqref{eq:anisotropic-AMCF-system}, 
we discretize the spatial derivatives using second-order centered finite differences, and approximate the temporal derivative by the BDF1, BDF2 and
Crank-Nicolson methods. 
For the Crank-Nicolson method, we additionally define
\[
\widehat{f}_i^{\,n+\frac12}
:= \frac{f_i^{\,n+1}+f_i^{\,n}}{2},
\]
and apply the same notation componentwise to vector-valued quantities.

Given the solution values $\left\{
\left(\mat X_i^{\,n-p},\, V_i^{\,n-p},\, \varkappa_i^{\,n-p}\right)
\right\}_{p=0}^{r-1}$ or $\left\{
\left(\mat X_i^{\,n-p}, \mathcal B_i^{n-p}, V_i^{\,n-p}, \varkappa_i^{\,n-p}\right)
\right\}_{p=0}^{r-1}$ 
from the previous \(r\) time levels, where \(r=1\) for the BDF1 and Crank-Nicolson methods, and \(r=2\) for the BDF2 method, the time-stepping procedure computes the updated solution
$\left(\mat X_i^{\,n+1},\, V_i^{\,n+1},\, \varkappa_i^{\,n+1}\right)$ or $\left(\mat X_i^{\,n+1}, \mathcal B_i^{n+1},  V_i^{\,n+1}, \varkappa_i^{\,n+1}\right)$
by solving the following fully discrete approximations. By employing different time discretization strategies, we obtain the fully discrete schemes for the new coupled system \eqref{eq:isotropic-AMCF-system}, including 
\begin{itemize}
	\item [(i)]\textbf{BDFk method ($k=1,2$) for the isotropic system (referred to the ISO-A-BDFk-FDMs)}:
	\begin{subequations}\label{eq:isotropic-BDFk}
		\begin{align}
			&\frac{1}{\Delta t}\sum_{\rho=0}^{k}
			\alpha_\rho\,\mat X_i^{\,n+1-\rho}
			=
			V_i^{n+1}\,\mat n_i^{n+1}
			+
			\frac{P_i^{n+1}}{\mathcal J}
			\left(
			M_i^{n+1}\left|\delta_\rho\mat X_i^{n+1}\right|
			\right)^{-2}
			\delta_\rho\!\left(
			M_i^{n+1}\left|\delta_\rho\mat X_i^{n+1}\right|
			\right)\mat\tau_i^{n+1},
			\\[4pt]
			&V_i^{n+1}
			=
			\varkappa_i^{n+1}
			-
			\frac{\mat n_i^{n+1}\cdot\mat e_1}
			{\mat X_i^{n+1}\cdot\mat e_1},
			\\[4pt]
			&\varkappa_i^{n+1}
			=
			\frac{
				\delta_{\rho\rho}\mat X_i^{n+1}\cdot \mat n_i^{n+1}}
			{\left|\delta_\rho\mat X_i^{n+1}\right|^2}.
		\end{align}
	\end{subequations}
	where the coefficients $\{\alpha_\rho\}_{\rho=0}^{k}$ correspond to the k-step BDFk method. For instance, the commonly used temporal first-order and second-order schemes are given by
	\begin{align*}
		&\text{BDF1 }(k=1): 
		&& \alpha_0 = 1,\quad \alpha_1 = -1; \\[4pt]
		&\text{BDF2 }(k=2): 
		&& \alpha_0 = \tfrac{3}{2},\quad \alpha_1 = -2,\quad \alpha_2 = \tfrac{1}{2}. 
	\end{align*}
	\item [(ii)]\textbf{Crank-Nicolson method for the isotropic system (referred to the ISO-A-CN-FDM)}: 
	\begin{subequations}\label{eq:isotropic-CN}
		\begin{align}
			&\frac{\mat X_i^{n+1}-\mat X_i^{n}}{\Delta t}
			=
			\widehat{V}_i^{n+\frac12}\,
			\widehat{\mat n}_i^{\,n+\frac12}
			+
			\widehat{\mathcal B}_i^{n+\frac12}	\widehat{\mat \tau}_i^{\,n+\frac12},
			\\[4pt]
			& \mathcal B_i^{n+1}=	\frac{P_i^{n+1}}{\mathcal J}
			\left(
			M_i^{n+1}\left|\delta_\rho\mat X_i^{n+1}\right|
			\right)^{-2}
			\delta_\rho\!\left(
			M_i^{n+1}\left|\delta_\rho\mat X_i^{n+1}\right|
			\right),\\
			&V_i^{n+1}
			=
			\varkappa_i^{n+1}
			-
			\frac{
				\mat n_i^{\,n+1}\cdot\mat e_1}
			{\mat X_i^{\,n+1}\cdot\mat e_1},
			\\[4pt]
			&\varkappa_i^{n+1}
			=
			\frac{
				\delta_{\rho\rho}\mat X_i^{n+1}\cdot \mat n_i^{n+1}}
			{\left|\delta_\rho\mat X_i^{n+1}\right|^2}.
		\end{align}
	\end{subequations}
	
\end{itemize}

To treat the nonlinearity in \eqref{eq:isotropic-BDFk} and \eqref{eq:isotropic-CN}, we adopt a Picard-type iterative strategy at each time step. 
Given the iteration $\left(\mat X_i^{n+1,m},\, V_i^{n+1,m},\, \varkappa_i^{n+1,m}\right)$ or $\left(\mat X_i^{n+1,m}, \mathcal B^{n+1,m}, V_i^{n+1,m},  \varkappa_i^{n+1,m}\right)$, the next iteration $\left(\mat X_i^{n+1,m+1},\, V_i^{n+1,m+1},\, \varkappa_i^{n+1,m+1}\right)$ or $\left(\mat X_i^{n+1,m+1}, \mathcal B^{n+1,m+1}, V_i^{n+1,m+1},\, \varkappa_i^{n+1,m+1}\right)$ is obtained by solving the following linearized system:
\begin{itemize}
	\item [(i)]\textbf{Picard-iteration method for the ISO-A-BDFk-FDMs}:
	\begin{subequations}\label{eq:isotropic-BDFk-Picard}
		\begin{align}
			&\frac{\alpha_0 \mat X_i^{\,n+1,m+1}+\sum_{p=1}^{k} \alpha_p \mat X_i^{\,n+1-p} } {\Delta t}
			=
			V_i^{n+1,m+1}\,\mat n_i^{n+1,m}
			+
			\frac{P_i^{n+1}}{\mathcal J}
			\left(
			M_i^{n+1,m}\left|\delta_\rho\mat X_i^{n+1,m}\right|
			\right)^{-2}
			\delta_\rho\!\left(
			M_i^{n+1,m}\left|\delta_\rho\mat X_i^{n+1,m}\right|
			\right)\mat\tau_i^{n+1,m},
			\\[4pt]
			&V_i^{n+1,m+1}
			=
			\varkappa_i^{n+1,m+1}
			-
			\frac{\mat n_i^{n+1,m}\cdot\mat e_1}
			{\mat X_i^{n+1,m}\cdot\mat e_1},
			\\[4pt]
			&\varkappa_i^{n+1,m+1}
			=
			\frac{
				\delta_{\rho\rho}\mat X_i^{n+1,m+1}\cdot \mat n_i^{n+1,m}}
			{\left|\delta_\rho\mat X_i^{n+1,m}\right|^2}.
		\end{align}
	\end{subequations}
	\item [(ii)]\textbf{Picard-iteration method for the ISO-A-CN-FDM}:
	\begin{subequations}\label{eq:isotropic-CN-Picard}
		\begin{align}
			&\frac{\mat X_i^{n+1,m+1}-\mat X_i^{n}}{\Delta t}
			=
			\widehat{V}_i^{\,n+\frac12,m+1}\,
			\widehat{\mat n}_i^{\,n+\frac12,m}
			+
			\widehat{\mathcal B}_i^{\,n+\frac12,m+1}\,
			\widehat{\mat \tau}_i^{\,n+\frac12,m},
			\\[4pt]
			&\mathcal B_i^{n+1,m+1}
			=
			\frac{P_i^{n+1,m}}{\mathcal J}
			\left(
			M_i^{n+1,m}\left|\delta_\rho\mat X_i^{n+1,m}\right|
			\right)^{-2}
			\delta_\rho\!\left(
			M_i^{n+1,m}\left|\delta_\rho\mat X_i^{n+1,m}\right|
			\right),
			\\[4pt]
			&V_i^{n+1,m+1}
			=
			\varkappa_i^{n+1,m+1}
			-
			\frac{
				\mat n_i^{\,n+1,m}\cdot\mat e_1
			}{
				\mat X_i^{\,n+1,m}\cdot\mat e_1
			},
			\\[4pt]
			&\varkappa_i^{n+1,m+1}
			=
			\frac{
				\delta_{\rho\rho}\mat X_i^{n+1,m+1}\cdot \mat n_i^{n+1,m}
			}{
				\left|\delta_\rho\mat X_i^{n+1,m}\right|^2
			}.
		\end{align}
	\end{subequations}
\end{itemize}
\begin{rem}
At the fully discrete level, even for the isotropic axisymmetric MCF, establishing an area dissipation property analogous to
Lemma~\ref{lem:area_dissipation} is highly nontrivial.
Standard time discretizations, such as BDFk or Crank--Nicolson schemes,
do not generally admit discrete counterparts of the transport identities
and integration--by--parts arguments used in the continuous analysis.
As a result, the monotonic decay of the surface area is not automatically
preserved, especially for higher-order time discretizations. 
In the next subsection, we adopt the Lagrange multiplier approach to establish several area-decay approximations for the system \eqref{eq:isotropic-AMCF-system}.
\end{rem}

\subsection{Area-decay methods}
For the axisymmetric MCF \eqref{eq:isotropic-AMCF-system}, the continuous model naturally satisfies an area dissipation property, as shown in Lemma \ref{lem:area_dissipation}. While standard time discretizations, such as BDFk or Crank-Nicolson methods, typically exhibit area decay in practical computations, this property may not be directly demonstrated at the fully discrete level, particularly for higher-order temporal schemes.
To address this issue, motivated by \cite{garcke2025structure}, we adopt a Lagrange multiplier approach and introduce a
scalar function into the normal velocity, leading to a formulation that is
amenable to the construction of fully discrete schemes which can preserve the area-decay property at the discrete level, independently of the time step size.
This idea leads to the following reformulated isotropic axisymmetric MCF:
\begin{subequations}\label{eq:isotropic-AMCF-LM-system}
	\begin{align}
		&\partial_t \mat X
		= \left(
		1 - \lambda(t)
		\right)V\mat n
		\;+\;
		\frac{P}{\mathcal J}\,
		\left(M\left|\partial_\rho \mat X\right|\right)^{-2}
		\,\partial_\rho\!\left(M\left|\partial_\rho \mat X\right|\right)\,\mat\tau ,
		\label{eq:LM-evolution} \\[4pt]
		&V
		= \varkappa
		- \frac{\mat n \cdot \mat e_1}{\mat X \cdot \mat e_1},
		\label{eq:LM-V} \\[4pt]
		&\varkappa
		= \frac{\partial_{\rho \rho} \mat X\cdot\mat n}
		{\left|\partial_{\rho}\mat X\right|^2},
		\label{eq:LM-kappa} \\[4pt]
		&\frac{\mathrm d}{\mathrm dt}A(t)
		= -\,2\pi\int^1_{0}
		r\,V^{2}\left|\partial_\rho\mat X\right|
		\,\mathrm d\rho .
		\label{eq:ISO-LM-energy}
	\end{align}
\end{subequations}

\begin{lemma}
	\label{lem:xx}
The new system \eqref{eq:isotropic-AMCF-LM-system} is consistent with the original system \eqref{eq:isotropic-AMCF-system}.
\end{lemma}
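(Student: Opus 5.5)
The claim to prove is that any smooth solution of \eqref{eq:isotropic-AMCF-LM-system} has $\lambda(t)\equiv 0$. It then reduces exactly to \eqref{eq:isotropic-AMCF-system}. Conversely, a solution of \eqref{eq:isotropic-AMCF-system} together with $\lambda\equiv0$ solves \eqref{eq:isotropic-AMCF-LM-system}, because Lemma~\ref{lem:area_dissipation} supplies \eqref{eq:ISO-LM-energy} (note $r=\mat X\cdot\mat e_1$). So the work is all in the first direction. The plan is to redo the computation of Lemma~\ref{lem:area_dissipation} with the modified velocity \eqref{eq:LM-evolution}, and compare the result with the imposed identity \eqref{eq:ISO-LM-energy}.

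\textbf{Step 1.} I repeat \eqref{eqn:area_dissipation_pf1}--\eqref{eqn:area_dissipation_pf5} without change. Those manipulations (differentiating $|\partial_\rho\mat X|$, integrating by parts on the periodic domain, using $\partial_\rho\mat\tau=\varkappa|\partial_\rho\mat X|\mat n$) never use the specific form of $\partial_t\mat X$. They give, for any smooth periodic evolution,
\[
\frac{\mathrm d}{\mathrm dt}A(t)=2\pi\int_0^1 (\partial_t\mat X\cdot\mat n)\big[\mat n\cdot\mat e_1-(\mat X\cdot\mat e_1)\varkappa\big]|\partial_\rho\mat X|\,\mathrm d\rho .
\]
The tangential part of the velocity drops out in this formula, so the mesh-redistribution term plays no role.

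\textbf{Step 2.} I insert $\partial_t\mat X\cdot\mat n=(1-\lambda)V$ from \eqref{eq:LM-evolution}, and use \eqref{eq:LM-V} in the form \eqref{eqn:area_dissipation_pf6}. This yields
\[
\frac{\mathrm d}{\mathrm dt}A(t)=-2\pi(1-\lambda(t))\int_0^1 r V^2|\partial_\rho\mat X|\,\mathrm d\rho .
\]
Equating with \eqref{eq:ISO-LM-energy} gives
\[
\lambda(t)\int_0^1 rV^2|\partial_\rho\mat X|\,\mathrm d\rho=0 .
\]

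\textbf{Step 3 (main obstacle).} The degenerate case is when $\int_0^1 rV^2|\partial_\rho\mat X|\,\mathrm d\rho=0$ at some time. Since $r>0$ and $|\partial_\rho\mat X|>0$, this forces $V\equiv0$, so the normal velocity vanishes whatever $\lambda$ is. In that case both systems prescribe the same motion $(1-\lambda)V\mat n=0=V\mat n$, and $\lambda$ may be set to $0$ without loss of generality. Otherwise $\lambda(t)=0$ directly. In either case \eqref{eq:LM-evolution} coincides with \eqref{eq:iso_x_t}, which proves consistency. I would state this degenerate case explicitly, since that is where a careless argument could fail.
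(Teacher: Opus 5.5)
Your proposal is correct and follows the paper's route. The paper likewise reuses the computation of Lemma~\ref{lem:area_dissipation} to write $\frac{\mathrm d}{\mathrm dt}A(t)=-2\pi\int_0^1 rVV_n|\partial_\rho\mat X|\,\mathrm d\rho$, substitutes $V_n=(1-\lambda)V$, and compares with \eqref{eq:ISO-LM-energy} to conclude $\lambda\equiv0$. Your handling of the degenerate case $\int_0^1 rV^2|\partial_\rho\mat X|\,\mathrm d\rho=0$ (where $\lambda$ is undetermined but the motion is unaffected) and of the converse direction are refinements the paper's proof skips.
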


\begin{proof}
For convenience, we denote $V_n := \partial_t \mat X\cdot \mat n$ and $r:=\mat X\cdot \mat e_1$. Differentiating $A(t)$ with respect to $t$ obtains  
  \begin{equation}\label{eq:xx_pf1}
		\frac{\mathrm d}{\mathrm dt}A(t)
		= -\,2\pi\int^1_{0} r\,V\,V_n\,\left|\partial_\rho\mat X\right|
		\,\mathrm d\rho.
	\end{equation}
 Thanks to \eqref{eq:LM-evolution}, we have 
	\(V_n = V-\lambda(t) V\).
	Substituting this into \eqref{eq:xx_pf1} gives 
	\begin{align}\label{eq:xx_pf2}
		\frac{\mathrm d}{\mathrm dt}A(t)
		&= -\,2\pi\int^1_{0} r\,V\bigg[V-\lambda(t)\,V\bigg]\,\left|\partial_\rho\mat X\right|
		\,\mathrm d\rho \nonumber\\
		&= -\,2\pi\int^1_{0} r\,V^{2}\,\left|\partial_\rho\mat X\right|
		\,\mathrm d\rho
		\;+\;2\pi\lambda(t)\int^1_{0} r\,V^{2}\,\left|\partial_\rho\mat X\right|
		\,\mathrm d\rho.
	\end{align}
	Comparing \eqref{eq:xx_pf2} with \eqref{eq:ISO-LM-energy} implies $\lambda(t)\equiv 0$. 
	Hence, The new system \eqref{eq:isotropic-AMCF-LM-system} is consistent with the original system \eqref{eq:isotropic-AMCF-system}. Therefore, we have completed the proof. 
\end{proof}

Based on the above equivalent reformulation \eqref{eq:isotropic-AMCF-LM-system}, we construct the following area-decay methods.
\begin{itemize}
	\item[(i)]\textbf{BDFk methods ($k=1,2$) for the isotropic system with
		Lagrange multiplier (referred to the ISO-A-LM-BDFk-FDMs)}:
	\begin{subequations}\label{eq:isotropic-BDFk-LM}
		\begin{align}
			&\frac{1}{\Delta t}\sum_{\rho=0}^{k}\alpha_\rho\,\mat X_i^{\,n+1-\rho}
			=
			\left(
			V_i^{\,n+1}-\lambda^{n+1}\,V_i^{\,n+1}
			\right)\mat n_i^{\,n+1}
			+
			\frac{P_i^{\,n+1}}{\mathcal J}
			\left(
			M_i^{\,n+1}\left|\delta_\rho\mat X_i^{\,n+1}\right|
			\right)^{-2}
			\delta_\rho\!\left(
			M_i^{\,n+1}\left|\delta_\rho\mat X_i^{\,n+1}\right|
			\right)\mat\tau_i^{\,n+1},
			\label{eq:iso-BDFk-LM-evolution}
			\\[4pt]
			&V_i^{\,n+1}
			=
			\varkappa_i^{\,n+1}
			-
			\frac{\mat n_i^{\,n+1}\cdot\mat e_1}
			{\mat X_i^{\,n+1}\cdot\mat e_1},
			\label{eq:iso-BDFk-LM-V}
			\\[4pt]
			&\varkappa_i^{\,n+1}
			=
			\frac{
				\delta_{\rho\rho}\mat X_i^{\,n+1}\cdot \mat n_i^{\,n+1}}
			{\left|\delta_\rho\mat X_i^{\,n+1}\right|^2},
			\label{eq:iso-BDFk-LM-kappa}
			\\[4pt]
			&\frac{1}{\Delta t}\sum_{\rho=0}^{k}\alpha_\rho\,A^{n+1-\rho}
			=
			-\,2\pi\int^1_{0}
			\left(\mat X^{n+1}\cdot\mat e_1\right)\,
			\left|V^{n+1}\right|^{2}\,
			\left|\partial_\rho \mat X^{n+1}\right|\,
			\mathrm d\rho .
			\label{eq:iso-BDFk-LM-energy}
		\end{align}
	\end{subequations}
	
	\item [(ii)]\textbf{Crank-Nicolson method for the isotropic system with
		Lagrange multiplier (referred to as the ISO-A-LM-CN-FDM)}:
	\begin{subequations}\label{eq:isotropic-CN-LM}
		\begin{align}
			&\frac{\mat X_i^{n+1}-\mat X_i^{n}}{\Delta t}
			=
			\left(
			\widehat{V}_i^{\,n+\frac12}
			-
			\lambda^{n+\frac12}\,
			\widehat{V}_i^{\,n+\frac12}
			\right)
			\widehat{\mat n}_i^{\,n+\frac12}
			+
			\widehat{\mathcal B}_i^{\,n+\frac12}\,
			\widehat{\mat \tau}_i^{\,n+\frac12},
			\label{eq:iso-CN-LM-evolution}
			\\[4pt]
			&\mathcal B_i^{n+1}
			=
			\frac{P_i^{n+1}}{\mathcal J}
			\left(
			M_i^{n+1}\left|\delta_\rho\mat X_i^{n+1}\right|
			\right)^{-2}
			\delta_\rho\!\left(
			M_i^{n+1}\left|\delta_\rho\mat X_i^{n+1}\right|
			\right),
			\label{eq:iso-CN-LM-B}
			\\[4pt]
			&V_i^{n+1}
			=
			\varkappa_i^{n+1}
			-
			\frac{
				\mat n_i^{\,n+1}\cdot\mat e_1}
			{\mat X_i^{\,n+1}\cdot\mat e_1},
			\label{eq:iso-CN-LM-V}
			\\[4pt]
			&\varkappa_i^{n+1}
			=
			\frac{
				\delta_{\rho\rho}\mat X_i^{n+1}\cdot \mat n_i^{n+1}}
			{\left|\delta_\rho\mat X_i^{n+1}\right|^2},
			\label{eq:iso-CN-LM-kappa}
			\\[4pt]
			&\frac{A^{n+1}-A^{n}}{\Delta t}
			=
			-\,2\pi\int^1_{0}
			\left(\mat X^{n+\frac12}\cdot\mat e_1\right)\,
			\left|\widehat{V}^{\,n+\frac12}\right|^{2}\,
			\left|\partial_\rho \mat X^{\,n+\frac12}\right|\,
			\mathrm d\rho .
			\label{eq:iso-CN-LM-energy}
		\end{align}
	\end{subequations}
\end{itemize}

\begin{thm} \label{thm:energy_dissipation}
The fully discrete schemes~\eqref{eq:isotropic-BDFk-LM} and~\eqref{eq:isotropic-CN-LM} are unconditionally area-decay, in the sense that 
\begin{align}\label{eqn:area-decay}
    A^{n+1}\leq A^n,\qquad n\geq 0. 
\end{align}
\end{thm}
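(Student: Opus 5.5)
The argument reads the area inequality directly off the last equation of each scheme, which was imposed on purpose; the scalar multiplier $\lambda^{n+1}$ (resp. $\lambda^{n+\frac12}$) is the extra unknown that makes this constraint solvable. The right-hand sides of \eqref{eq:iso-BDFk-LM-energy} and \eqref{eq:iso-CN-LM-energy} are nonpositive: they have the form $-2\pi\int_0^1 r\,|V|^2\,|\partial_\rho\mat X|\,\mathrm d\rho$, where $r=\mat X\cdot\mat e_1>0$ by the axisymmetric assumption and $|\partial_\rho\mat X|\ge 0$. At the discrete level the integral is a quadrature sum (e.g. a rectangle rule over the nodes $\rho_i$ with $\delta_\rho\mat X_i$), and each summand is still nonnegative as long as the discrete radii $\mat X_i\cdot\mat e_1$ stay positive. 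Denote this sum by $-\Delta t^{-1}\,\mathcal D^{n+1}$ with $\mathcal D^{n+1}\ge 0$ (and analogously at the half step).

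\emph{Crank--Nicolson and BDF1.} Multiplying \eqref{eq:iso-CN-LM-energy} by $\Delta t$ gives $A^{n+1}-A^n=-\mathcal D^{n+1/2}\le 0$ immediately. The BDF1 case ($\alpha_0=1$, $\alpha_1=-1$) is identical.

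\emph{BDF2.} This case is the main obstacle, since \eqref{eq:iso-BDFk-LM-energy} only gives
\[
\tfrac32A^{n+1}-2A^n+\tfrac12A^{n-1}=-\mathcal D^{n+1}\le 0,
\]
that is, $\tfrac32(A^{n+1}-A^n)\le \tfrac12(A^n-A^{n-1})$. Setting $d^{n+1}:=A^{n+1}-A^n$, this reads $d^{n+1}\le \tfrac13 d^n$. The plan is to argue by induction: if the startup step ($n=0$, computed with BDF1 or CN as is standard) satisfies $d^1\le 0$, then $d^{n+1}\le \tfrac13 d^n\le 0$ for every $n$, giving \eqref{eqn:area-decay}. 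I would state explicitly that BDF2 is initialized by one step of the BDF1 or CN Lagrange-multiplier scheme, so that $d^1\le0$ follows from the first case. Combining the cases proves unconditional area decay, with no restriction on $\Delta t$.

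It remains to remark on well-posedness. The theorem presupposes that the nonlinear system, including the scalar equation for $\lambda$, admits a solution with $\mat X_i\cdot\mat e_1>0$. The proof is conditional on solvability, exactly as in \cite{garcke2025structure}, and positivity of $r$ is the only structural fact used.
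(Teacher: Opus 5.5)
Your proposal is correct and follows essentially the same route as the paper: the BDF1 and Crank--Nicolson cases come directly from the nonpositivity of the imposed dissipation term (given $\mat X\cdot\mat e_1>0$), and the BDF2 case uses the rearrangement $\frac32(A^{n+1}-A^n)\le\frac12(A^n-A^{n-1})$ with induction from a BDF1 start-up step that gives $A^1\le A^0$. Your added remarks, that the integral is in practice a quadrature sum and that the result is conditional on solvability of the nonlinear system (including the scalar equation for $\lambda$) with positive discrete radii, are accurate clarifications that the paper leaves implicit.
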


\begin{proof}
For the ISO-A-LM-BDF1-FDM, it follows from~\eqref{eq:iso-BDFk-LM-energy} that
\begin{align}\label{eqn:area-decay-pf1}
    \frac{A^{n+1}-A^{n}}{\Delta t}
    =
    -\,2\pi \int^1_{0}
    \left(\mat X^{n+1}\cdot\mat e_1\right)\,
    \left|V^{n+1}\right|^{2}\,
    \left|\partial_\rho \mat X^{n+1}\right|\,
    \mathrm d\rho .
\end{align}
Under the assumptions that $\mat X^{n+1}\cdot\mat e_1>0$ and $\left|\partial_\rho \mat X^{n+1}\right|>0$, the right-hand side of~\eqref{eqn:area-decay-pf1} is non-positive, which immediately implies the area-decay property~\eqref{eqn:area-decay}.

In addition, for the ISO-A-LM-BDF2-FDM, the first time step is computed by the ISO-A-LM-BDF1-FDM. 
According to the above analysis, this yields $A^1 \le A^0$. 
By using~\eqref{eq:iso-BDFk-LM-energy}, we further obtain
\begin{align}\label{eqn:area-decay-pf2}
    \frac{1}{\Delta t}
    \left(
        \frac{3}{2} A^{n+1}
        - 2 A^{n}
        + \frac{1}{2} A^{n-1}
    \right)
    =
    -\,2\pi \int^1_{0}
    \left(\mat X^{n+1}\cdot\mat e_1\right)\,
    \left|V^{n+1}\right|^{2}\,
    \left|\partial_\rho \mat X^{n+1}\right|\,
    \mathrm d\rho
    \le 0 .
\end{align}
The inequality \eqref{eqn:area-decay-pf2} further implies that
\begin{align}\label{eqn:area-decay-pf3}
    \frac{3}{2}\left(A^{n+1}-A^{n}\right)
    \le
    \frac{1}{2}\left(A^{n}-A^{n-1}\right).
\end{align}
We now proceed by mathematical induction. 
Since $A^1 \le A^0$, assume that
\[
A^{j} \le A^{j-1}, \qquad j = 1,2,\dots,n .
\]
Then, \eqref{eqn:area-decay-pf3} immediately implies that $A^{n+1} \le A^{n}$. 
Therefore, the area-decay property~\eqref{eqn:area-decay} holds for all $n \ge 1$.

Finally, for the ISO-A-LM-CN-FDM, 
\eqref{eq:iso-CN-LM-energy} directly implies
\[
\frac{A^{n+1}-A^{n}}{\Delta t}
=
-\,2\pi \int^1_{0}
\left(\widehat{\mat X}^{n+\frac12}\cdot\mat e_1\right)\,
\left|\widehat{V}^{\,n+\frac12}\right|^{2}\,
\left|\partial_\rho \mat X^{\,n+\frac12}\right|\,
\mathrm d\rho
\;\le\;0,
\]
and hence $A^{n+1}\le A^{n}$. 
Therefore, the area-decay  property~\eqref{eqn:area-decay} holds for all $n \ge 1$. 
This completes the proof.
\end{proof}


\section{Numerical approximations for the anisotropic axisymmetric MCF} \label{sec3}
In this section, we first review the strong formulation of the axisymmetric MCF with anisotropic surface energy. Based on this formulation, we then develop adaptive moving mesh methods along with their corresponding energy-stable numerical schemes.

\subsection{The strong formulation}
In contrast to the isotropic case, where the axisymmetric formulation of MCF is well-established, explicit derivations of anisotropic axisymmetric models based on a surface energy functional are less commonly documented in the literature. For details on surface diffusion and solid-state dewetting, see \cite{zhao2019sharp, li2024parametric}. For completeness, we provide a simplified derivation of the axisymmetric anisotropic MCF.

Consider the generating curve $\Gamma(t)$ parametrized by the arc-length variable $s$.
We introduce a normal perturbation of the curve in the form
\[
\Gamma^\varepsilon = \mat X + \varepsilon\,\mat X^{(1)},
\]
where $\varepsilon$ is a small perturbation parameter and
$\mat X^{(1)} = \left(r^{(1)}(s),\, z^{(1)}(s)\right) \in \left(\mathrm{Lip}[0,L]\right)^2$.
If $s$ and $\varphi$ are taken as the parameters of the perturbed surface $S^\varepsilon$,
the associated metric tensor is given by
\[
g^\varepsilon
=
\left[\left(r_s^\varepsilon\right)^2 + \left(z_s^\varepsilon\right)^2\right]
\left(r^\varepsilon\right)^2 .
\]
Based on \eqref{eq:AMCF_energy}, the total free energy of the perturbed system can be written as
\begin{equation}\label{eq:energyvar}
    W^{\varepsilon}
    =
    2\pi \int_{\Gamma^{\varepsilon}}
    r^{\varepsilon}\,\gamma\!\left(\theta^{\varepsilon}\right)\,
    \left| \partial_{s} \mat X^{\varepsilon}\right|
    \,\mathrm{d}s .
\end{equation}
The unit tangential and outer normal vectors of the curve $\Gamma$ are defined in \eqref{eq:tau_continue}.
We expand the following quantities at $\varepsilon = 0$:
\begin{subequations}\label{eq:eps-expansion}
\begin{align}
&r^\varepsilon
= r + r^{(1)}\,\varepsilon + O(\varepsilon^2), \label{eq:eps-expansion-a} \\[2mm]
&\left|\partial_{s} \mat X^\varepsilon\right|
= 1 + (\mat\tau \cdot \partial_{s} \mat X^{(1)})\,\varepsilon + O(\varepsilon^2), \label{eq:eps-expansion-b} \\[2mm]
&\gamma(\theta^\varepsilon)
= \gamma(\theta)
+ \gamma'(\theta)\,(\mat n \cdot \partial_{s} \mat X^{(1)})\,\varepsilon
+ O(\varepsilon^2). \label{eq:eps-expansion-c}
\end{align}
\end{subequations}
To compute the first variation of the total energy, we rewrite the energy functional by Taylor's formula
\begin{equation}\label{eq:w_varepsilon}
    W^{\varepsilon} = W + \varepsilon\, W^{(1)} + O(\varepsilon^2),
\end{equation}
where $W^{(1)}$ denotes the coefficient of the $O(\varepsilon)$ term.
Substituting \eqref{eq:eps-expansion-a}-\eqref{eq:eps-expansion-c} into
\eqref{eq:energyvar} and performing integration by parts, we obtain
\begin{equation}\label{eq:2.10}
\begin{aligned}
W^{(1)}
&=
2\pi \int_0^L
\left[
\gamma'(\theta)\left(\mat n \cdot \partial_s \mat X^{(1)}\right) r
+
\gamma(\theta)\left(\mat\tau \cdot \partial_s \mat X^{(1)}\right) r
+
\gamma(\theta) r^{(1)}
\right] \,\mathrm ds \\[2mm]
&=
2\pi \int_0^L
\left[
-
\partial_s\!\left(\gamma(\theta)\mat\tau + \gamma'(\theta)\mat n\right)
\cdot \mat X^{(1)}\, r
-
\left(\gamma(\theta)\mat\tau + \gamma'(\theta)\mat n\right)
\cdot \partial_s \mat X^{(1)}
+
\gamma(\theta) r^{(1)}
\right] \,\mathrm ds .
\end{aligned}
\end{equation}
Using the geometric identities
\[
\partial_s\mat n = \varkappa\,\mat\tau,
\qquad
\partial_s\theta = -\varkappa,
\qquad
\partial_s\mat\tau = -\varkappa\,\mat n,
\qquad
\mat\tau \cdot \partial_s \mat X^{(1)} - r^{(1)}
=
\partial_s z\, \mat X^{(1)} \cdot \mat n ,
\]
we arrive at
\begin{equation}
\begin{aligned}
W^{(1)}
&=
2\pi \int_0^L
\left[
\left(\gamma(\theta) + \gamma''(\theta)\right)\varkappa\,
\left(\mat n \cdot \mat X^{(1)}\right) r
-
\left(\gamma(\theta)\partial_s z + \gamma'(\theta)\partial_s r\right)
\left(\mat X^{(1)} \cdot \mat n\right)
\right]
\,\mathrm ds \\[2mm]
&=
2\pi \int_0^L
\left[
\left(\gamma(\theta)+\gamma''(\theta)\right)\varkappa
-
\frac{
\left(\gamma(\theta)\,\mat n + \gamma'(\theta)\,\mat\tau\right)\cdot \mat e_1
}{\mat X\cdot\mat e_1}
\right]
\left(\mat X^{(1)} \cdot \mat n\right)
\, r\,\mathrm ds .
\end{aligned}
\end{equation}
Similar to \cite{zhao2019sharp}, we define
\begin{equation}\label{eq:aniso-AMCF}
	\mu=
\left(\gamma(\theta)+\gamma''(\theta)\right)\varkappa- \frac{\left(\gamma(\theta)\,\mat n
		+ \gamma'(\theta)\,\mat\tau
		\right)\cdot\mat e_1
	}{\mat X\cdot\mat e_1}.
\end{equation}

We introduce the tangential component of the velocity via the variational mesh redistribution strategy outlined in Lemma~\ref{lemma:tangential}. The resulting coupled system for the anisotropic axisymmetric MCF is given by:
\begin{subequations}\label{eq:anisotropic-AMCF-system}
    \begin{align} 
        &\partial_t \mat X
    =\mu\mat n
  + \frac{P}{\mathcal J}\,
    \left(M\left|\partial_\rho \mat X\right|\right)^{-2}
    \partial_\rho\!\left(M\left|\partial_\rho \mat X\right|\right)\mat\tau ,\label{eq:unified_velocity} \\[4pt]
       &\mu=
\left(\gamma(\theta)+\gamma''(\theta)\right)\varkappa- \frac{\left(\gamma(\theta)\,\mat n
		+ \gamma'(\theta)\,\mat\tau
		\right)\cdot\mat e_1
	}{\mat X\cdot\mat e_1},\label{eq:mu}     \\[4pt]
    &\varkappa = \frac{\partial_{\rho \rho} \mat X\cdot\mat n}{\left|\partial_{\rho}\mat X\right|^2}.
    \end{align}
\end{subequations}

\begin{lemma}
\label{lem:aniso-energy-dissipation}
Let $W(t)$ denote the anisotropic surface energy of the axisymmetric surface
generated by $\Gamma(t)$ governed by \eqref{eq:anisotropic-AMCF-system}.
Then the energy is dissipative in the sense that 
\begin{equation}\label{eqn:aniso_energy_dissipation}
\frac{\mathrm d}{\mathrm dt} W(t)
=
-\,2\pi
\int^1_{0}
\left(\mat X\cdot \mat e_1\right)\,
\mu^{2}\,
\left|\partial_\rho\mat X\right|
\,\mathrm d\rho
\;\le\; 0 .
\end{equation}
\end{lemma}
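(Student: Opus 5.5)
The plan mirrors the proof of Lemma~\ref{lem:area_dissipation}, with the first variation $W^{(1)}$ computed above used as the key identity. Write
\[
W(t)=2\pi\int_0^1 (\mat X\cdot\mat e_1)\,\gamma(\theta)\,\left|\partial_\rho\mat X\right|\mathrm d\rho .
\]
Differentiating in time gives three terms: one from $\partial_t r$, one from $\partial_t\left|\partial_\rho\mat X\right|=\partial_\rho\partial_t\mat X\cdot\mat\tau$, and one from $\gamma'(\theta)\,\partial_t\theta$. The last one is handled by writing $\partial_t\theta\,\left|\partial_\rho\mat X\right|=\mat n\cdot\partial_\rho\partial_t\mat X$, with the sign convention used in \eqref{eq:eps-expansion-c}.

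This is precisely the computation that produced $W^{(1)}$, with $\mat X^{(1)}$ replaced by $\partial_t\mat X$ and $\mathrm ds=\left|\partial_\rho\mat X\right|\mathrm d\rho$. Rather than redo it, I would invoke it directly. After integration by parts (no boundary terms, by periodicity on $\bI$), the geometric identities listed before \eqref{eq:aniso-AMCF} give
\[
\frac{\mathrm d}{\mathrm dt}W(t)=2\pi\int_0^1 \mu\,(\partial_t\mat X\cdot\mat n)\,(\mat X\cdot\mat e_1)\left|\partial_\rho\mat X\right|\mathrm d\rho ,
\]
with $\mu$ as in \eqref{eq:mu}.

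The main point to check is that the tangential part of $\partial_t\mat X$ contributes nothing. I would verify this on its own rather than rely on the general variation formula. Take $\partial_t\mat X=f\mat\tau$ and compute the time derivative of the integrand directly. The $\theta$-term gives $\partial_s(f\mat\tau)\cdot\mat n=-f\varkappa$ (sign conventions as in the paper), so $\gamma(\theta)$ is merely transported. The total contribution then becomes
\[
2\pi\int \partial_s\bigl(f\,r\,\gamma(\theta)\bigr)\,\mathrm ds=0 ,
\]
which is the reparametrization invariance of $W$. This is the step I expect to be most delicate: it requires the sign conventions for $\theta$, $\mat n$ and $\varkappa$ to be handled consistently.

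Finally, substitute \eqref{eq:unified_velocity}, which gives $\partial_t\mat X\cdot\mat n=\mu$. This yields
\[
\frac{\mathrm d}{\mathrm dt}W=-2\pi\int_0^1 (\mat X\cdot\mat e_1)\,\mu^2\left|\partial_\rho\mat X\right|\mathrm d\rho ,
\]
with the overall sign fixed by the gradient-flow orientation of $\mat n$ as in Lemma~\ref{lem:area_dissipation}. This is nonpositive because $\mat X\cdot\mat e_1>0$ and $\left|\partial_\rho\mat X\right|>0$.
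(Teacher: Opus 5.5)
Your outline (differentiate $W$, discard the tangential part by reparametrization invariance, identify the normal part with $\mu$, substitute $\partial_t\mat X\cdot\mat n=\mu$) follows the paper's route. The gap is that the proposal never establishes the sign, which is the entire content of the lemma.

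Invoking $W^{(1)}$ gives you $\frac{\mathrm d}{\mathrm dt}W=+2\pi\int_0^1 r\,\mu\,(\partial_t\mat X\cdot\mat n)\,|\partial_\rho\mat X|\,\mathrm d\rho$ with $r=\mat X\cdot\mat e_1$. Substituting $\partial_t\mat X\cdot\mat n=\mu$ then yields $+2\pi\int_0^1 r\mu^2|\partial_\rho\mat X|\,\mathrm d\rho\ge 0$, the opposite of the claim. Saying the sign is ``fixed by the gradient-flow orientation of $\mat n$'' is circular, since whether $\partial_t\mat X\cdot\mat n=\mu$ is a descent direction is exactly what has to be shown.

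The discrepancy is a genuine error, not a normalization. The derivation of $W^{(1)}$ uses $\partial_s\mat\tau=-\varkappa\mat n$ and $\partial_s\theta=-\varkappa$. By contrast, \eqref{eq:anisotropic-AMCF-system} defines $\varkappa$ through $\varkappa\mat n=\partial_{ss}\mat X$, i.e.\ $\partial_s\mat\tau=+\varkappa\mat n$. For $\gamma\equiv1$ one has $\mu=V$, and the computation in the proof of Lemma~\ref{lem:area_dissipation} (whose intermediate steps hold for an arbitrary velocity) gives $\frac{\mathrm d}{\mathrm dt}A=-2\pi\int_0^1 r\,\mu\,(\partial_t\mat X\cdot\mat n)|\partial_\rho\mat X|\,\mathrm d\rho$. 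So the identity you ``invoke directly'' has the wrong sign in the conventions of the system. You cannot simply defer to the paper here either: its display \eqref{eq:dWdt_final} equals $+2\pi\int_0^1 r\,\mu\,(\partial_t\mat X\cdot\mat n)|\partial_\rho\mat X|\,\mathrm d\rho$ by \eqref{eq:mu}, yet is rewritten with a minus sign in \eqref{eq:dWdt_mu}.

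Fixing only the overall sign from the isotropic limit does not repair this. That limit says nothing about the $\gamma'$ term, which is where the convention clash also shows up. Carry out the computation with your step-one convention $\partial_t\theta\,|\partial_\rho\mat X|=\mat n\cdot\partial_\rho\partial_t\mat X$ together with $\partial_s\mat\tau=\varkappa\mat n$ (hence $\partial_s\theta=\varkappa$). Integrating $r\gamma'(\theta)\,\partial_s(\partial_t\mat X\cdot\mat n)$ by parts produces $-(\mat\tau\cdot\mat e_1)\gamma'$, and you obtain
\[
\frac{\mathrm d}{\mathrm dt}W=-2\pi\int_0^1 r\Bigl[\bigl(\gamma(\theta)+\gamma''(\theta)\bigr)\varkappa-\frac{\bigl(\gamma(\theta)\mat n-\gamma'(\theta)\mat\tau\bigr)\cdot\mat e_1}{r}\Bigr](\partial_t\mat X\cdot\mat n)\,|\partial_\rho\mat X|\,\mathrm d\rho .
\]
The bracket differs from \eqref{eq:mu} by $2\gamma'(\theta)(\mat\tau\cdot\mat e_1)/r$, so with $\partial_t\mat X\cdot\mat n=\mu$ this has no definite sign in general. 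To land exactly on \eqref{eq:mu} you must orient $\theta$ the other way: $\partial_s\theta=-\varkappa$ and $\partial_t\theta\,|\partial_\rho\mat X|=-\mat n\cdot\partial_\rho\partial_t\mat X$.

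The missing step is therefore one self-contained computation of $\frac{\mathrm d}{\mathrm dt}W$ in the conventions of \eqref{eq:anisotropic-AMCF-system}, with the orientation of $\theta$ stated explicitly and every sign tracked. Only then does substituting $\partial_t\mat X\cdot\mat n=\mu$ give $-2\pi\int_0^1 r\mu^2|\partial_\rho\mat X|\,\mathrm d\rho\le0$. Your separate tangential check is fine and does not depend on these conventions.
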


\begin{proof}
The anisotropic surface energy of the axisymmetric surface generated by
$\Gamma(t)$ can be written as 
\begin{equation}\label{eq:W_def}
W(t)
=
2\pi \int^1_{0}
\left(\mat X\cdot\mat e_1\right)\,
\gamma(\theta)\,
\left|\partial_\rho\mat X\right|
\,\mathrm d\rho .
\end{equation}
Differentiating \eqref{eq:W_def} with respect to $t$, we obtain
\begin{align}
\frac{\mathrm d}{\mathrm dt}W(t)
&=
2\pi\int^1_{0}
\bigg[
\left(\partial_t\mat X\cdot\mat e_1\right)\,\gamma(\theta)
+
\left(\mat X\cdot\mat e_1\right)\,\gamma'(\theta)\,\partial_t\theta
\bigg]
\left|\partial_\rho\mat X\right|
\,\mathrm d\rho
\nonumber\\
&\quad
+
2\pi\int^1_{0}
\left(\mat X\cdot\mat e_1\right)\,\gamma(\theta)\,
\mat \tau\cdot\partial_\rho(\partial_t\mat X)
\,\mathrm d\rho.
\label{eq:dWdt_step1}
\end{align}
By using 
\begin{equation}\label{eq:theta_t_identity}
\partial_t\theta\,\left|\partial_\rho\mat X\right|
=
\partial_\rho\!\left(\partial_t\mat X\cdot\mat n\right)
+
\partial_\rho\theta\,\left(\partial_t\mat X\cdot\mat \tau\right),\quad
\mat \tau\cdot\partial_\rho(\partial_t\mat X)
=
\partial_\rho\left(\partial_t\mat X\cdot\mat \tau\right)
-
\left(\partial_\rho\mat \tau\right)\cdot\partial_t\mat X,
\end{equation}
we rewrite \eqref{eq:dWdt_step1} as
\begin{align}
\frac{\mathrm d}{\mathrm dt}W(t)
&=
2\pi\int^1_{0}
\left(\partial_t\mat X\cdot\mat e_1\right)\,\gamma(\theta)
\left|\partial_\rho\mat X\right|
\,\mathrm d\rho
+
2\pi\int^1_{0}
\left(\mat X\cdot\mat e_1\right)\,\gamma'(\theta)\,
\partial_\rho\!\left(\partial_t\mat X\cdot\mat n\right)
\,\mathrm d\rho
\nonumber\\
&\quad
+
2\pi\int^1_{0}
\left(\mat X\cdot\mat e_1\right)\,\gamma'(\theta)\,\partial_\rho\theta
\left(\partial_t\mat X\cdot\mat \tau\right)
\,\mathrm d\rho
+
2\pi\int^1_{0}
\left(\mat X\cdot\mat e_1\right)\,\gamma(\theta)\,
\partial_\rho\!\left(\partial_t\mat X\cdot\mat \tau\right)
\,\mathrm d\rho
\nonumber\\
&\quad
-
2\pi\int^1_{0}
\left(\mat X\cdot\mat e_1\right)\,\gamma(\theta)\,
\left(\partial_\rho\mat\tau\cdot\partial_t\mat X\right)
\,\mathrm d\rho .
\label{eq:dWdt_step2}
\end{align}
By applying integration by parts to the second and the forth terms of \eqref{eq:dWdt_step2}, we have 
\begin{align}
\frac{\mathrm d}{\mathrm dt}W(t)
&=
2\pi\int^1_{0}
\left(\partial_t\mat X\cdot\mat e_1\right)\,\gamma(\theta)
\left|\partial_\rho\mat X\right|
\,\mathrm d\rho
-
2\pi\int^1_{0}
\partial_\rho\!\left(
\left(\mat X\cdot\mat e_1\right)\gamma'(\theta)
\right)
\left(\partial_t\mat X\cdot\mat n\right)
\,\mathrm d\rho \nonumber\\
&\quad
+
2\pi\int^1_{0}
\left(\mat X\cdot\mat e_1\right)\,\gamma'(\theta)\,\partial_\rho\theta
\left(\partial_t\mat X\cdot\mat \tau\right)
\,\mathrm d\rho -
2\pi\int^1_{0}
\partial_\rho\!\left(
\left(\mat X\cdot\mat e_1\right)\gamma(\theta)
\right)
\left(\partial_t\mat X\cdot\mat \tau\right)
\,\mathrm d\rho
\nonumber\\
&\quad
-
2\pi\int^1_{0}
\left(\mat X\cdot\mat e_1\right)\gamma(\theta)
\left(\partial_\rho\mat \tau\cdot\partial_t\mat X\right)
\,\mathrm d\rho.
\label{eq:dWdt_step4}
\end{align}
By using $\partial_\rho \theta = \varkappa\,|\partial_\rho \mat X|$
and $\partial_\rho\mat\tau=\varkappa\,|\partial_\rho\mat X|\,\mat n$, 
the normal-tangential decomposition \eqref{eq:velocity-decomposition}, together with
\[
\partial_t\mat X\cdot\mat e_1
=
(\partial_t\mat X\cdot\mat\tau)(\mat\tau\cdot\mat e_1)
+
(\partial_t\mat X\cdot\mat n)(\mat n\cdot\mat e_1),
\]
we have 
\begin{align}
\frac{\mathrm d}{\mathrm dt}W(t)
&=
2\pi\int_{\Gamma(t)}
(\mat X\cdot\mat e_1)\left(\gamma(\theta)+\gamma''(\theta)\right)\varkappa
\left(\partial_t\mat X\cdot\mat n\right)
|\partial_\rho\mat X|\,
\mathrm d\rho
\nonumber\\
&\quad
-2\pi\int_{\Gamma(t)}
\left(\gamma(\theta)\mat n+\gamma'(\theta)\mat\tau\right)\cdot\mat e_1
\left(\partial_t\mat X\cdot\mat n\right)
|\partial_\rho\mat X|\,
\mathrm d\rho .
\label{eq:dWdt_final}
\end{align}
By virtue of \eqref{eq:mu}, we can rewrite \eqref{eq:dWdt_final} as
\begin{equation}\label{eq:dWdt_mu}
\frac{\mathrm d}{\mathrm dt}W(t)
=
-\,2\pi\int^1_{0}
\left(\mat X\cdot\mat e_1\right)\,
\mu\,
\left(\partial_t\mat X\cdot\mat n\right)
\left|\partial_\rho\mat X\right|
\,\mathrm d\rho .
\end{equation}
Finally, taking the normal component of the evolution law \eqref{eq:unified_velocity}
and substituting it into \eqref{eq:dWdt_mu}, we obtain
\[
\frac{\mathrm d}{\mathrm dt}W(t)
=
-\,2\pi\int^1_{0}
\left(\mat X\cdot\mat e_1\right)\,
\mu^2
\left|\partial_\rho\mat X\right|
\,\mathrm d\rho
\le 0,
\]
which completes the proof.
\end{proof}

\subsection{Numerical discretizations}\label{sec3-aniso-discretizations}
In this subsection, we intend to construct fully discrete schemes for the anisotropic coupled system
\eqref{eq:anisotropic-AMCF-system}. To this end, 
central finite differences are employed for the spatial discretization, while
BDFk ($k=1,2$) and the Crank-Nicolson methods are used for time integration.
An adaptive tangential redistribution term is incorporated to achieve moving
mesh refinement in regions with strong anisotropic effects.
Below, we present the BDFk and Crank-Nicolson methods for the coupled system \eqref{eq:anisotropic-AMCF-system},
where the resulting nonlinear systems are solved
by a Picard-type iteration.

\begin{itemize}
    \item [(i)]\textbf{BDFk methods ($k=1,2$) for the anisotropic system (referred to the ANISO-A-BDFk-FDMs)}:
\begin{subequations}\label{eq:anisotropic-BDFk}
\begin{align}
&\frac{1}{\Delta t}\sum_{\rho=0}^{k}
\alpha_\rho\,\mat X_i^{\,n+1-\rho}
=
\mu_i^{\,n+1}\,\mat n_i^{\,n+1}
+
\frac{P_i^{n+1}}{\mathcal J}
\left(
  M_i^{\,n+1}\left|\delta_\rho\mat X_i^{\,n+1}\right|
\right)^{-2}
\delta_\rho\!\left(
  M_i^{\,n+1}\left|\delta_\rho\mat X_i^{\,n+1}\right|
\right)\mat\tau_i^{\,n+1},
\\[4pt]
&\mu_i^{\,n+1}
=
\left(\gamma(\theta_i^{\,n+1,m})+\gamma''(\theta_i^{\,n+1,m})\right)\,
\varkappa_i^{\,n+1}
-
\frac{
  \left(
    \gamma(\theta_i^{\,n+1,m})\,\mat n_i^{\,n+1}
    + \gamma'(\theta_i^{\,n+1,m})\,\mat\tau_i^{\,n+1}
  \right)\cdot\mat e_1
}{
  \mat X_i^{\,n+1}\cdot\mat e_1
},
\\[4pt]
&\varkappa_i^{\,n+1}
=
\frac{
  \delta_{\rho\rho}\mat X_i^{\,n+1}\cdot \mat n_i^{\,n+1}
}{
  \left|\delta_\rho\mat X_i^{\,n+1}\right|^2
}.
\end{align}
\end{subequations}

\item [(ii)]\textbf{Crank-Nicolson method for the anisotropic system (referred to the ANISO-A-CN-FDM)}: 
\begin{subequations}\label{eq:anisotropic-CN}
		\begin{align}
			&\frac{\mat X_i^{n+1}-\mat X_i^{n}}{\Delta t}
			=
			\widehat{\mu}_i^{\,n+\frac12}\,
			\widehat{\mat n}_i^{\,n+\frac12}
			+
			\widehat{\mathcal B}_i^{n+\frac12}	\widehat{\mat \tau}_i^{\,n+\frac12},
			\\[4pt]
			& \mathcal B_i^{n+1}=	\frac{P_i^{n+1}}{\mathcal J}
			\left(
			M_i^{n+1}\left|\delta_\rho\mat X_i^{n+1}\right|
			\right)^{-2}
			\delta_\rho\!\left(
			M_i^{n+1}\left|\delta_\rho\mat X_i^{n+1}\right|
			\right),\\
			&\mu_i^{\,n+1}
			=
			\left(\gamma(\theta_i^{\,n+1,m})+\gamma''(\theta_i^{\,n+1,m})\right)\,
			\varkappa_i^{\,n+1}
			-
			\frac{
				\left(
				\gamma(\theta_i^{\,n+1,m})\,\mat n_i^{\,n+1}
				+ \gamma'(\theta_i^{\,n+1,m})\,\mat\tau_i^{\,n+1}
				\right)\cdot\mat e_1
			}{
				\mat X_i^{\,n+1}\cdot\mat e_1
			},
			\\[4pt]
			&\varkappa_i^{\,n+1}
			=
			\frac{
				\delta_{\rho\rho}\mat X_i^{\,n+1}\cdot \mat n_i^{\,n+1}
			}{
				\left|\delta_\rho\mat X_i^{\,n+1}\right|^2
			}.
		\end{align}
	\end{subequations}
\end{itemize}

Analogous to the isotropic case, we treat the nonlinearity in \eqref{eq:anisotropic-BDFk} and \eqref{eq:anisotropic-CN} by a Picard-type
iteration. Given the iteration \(\left(\mat X_i^{n+1,m},\, \mu_i^{n+1,m},\, \varkappa_i^{n+1,m}\right)\) or \(\left(\mat X_i^{n+1,m}, \mathcal B^{n+1,m}, \mu_i^{n+1,m},\, \varkappa_i^{n+1,m}\right)\), the next iteration is obtained by solving the following linearized systems.

\begin{itemize}
    \item [(i)]\textbf{Picard-iteration method for the ANISO-A-BDFk-FDMs}:
\begin{subequations}\label{eq:anisotropic-BDFk-Picard}
\begin{align}
&\frac{\alpha_0 \mat X_i^{\,n+1,m+1}+\sum_{p=1}^{k} \alpha_p \mat X_i^{\,n+1-p} } {\Delta t}
=
\mu_i^{\,n+1,m+1}\,\mat n_i^{\,n+1,m}
+
\frac{P_i^{n+1}}{\mathcal J}
\left(
  M_i^{\,n+1,m}\left|\delta_\rho\mat X_i^{\,n+1,m}\right|
\right)^{-2}
\delta_\rho\!\left(
  M_i^{\,n+1,m}\left|\delta_\rho\mat X_i^{\,n+1,m}\right|
\right)\mat\tau_i^{\,n+1,m},
\\[4pt]
&\mu_i^{\,n+1,m+1}
=
\left(
  \gamma\!\left(\theta_i^{\,n+1,m}\right)
  +\gamma''\!\left(\theta_i^{\,n+1,m}\right)
\right)\varkappa_i^{\,n+1,m+1}
-
\frac{\left(\gamma\!\left(\theta_i^{\,n+1,m}\right)\,\mat n_i^{\,n+1,m}
    + \gamma'\!\left(\theta_i^{\,n+1,m}\right)\,\mat\tau_i^{\,n+1,m}
  \right)\cdot\mat e_1
}{
  \mat X_i^{\,n+1,m}\cdot\mat e_1
},
\\[4pt]
&\varkappa_i^{\,n+1,m+1}
=
\frac{
  \delta_{\rho\rho}\mat X_i^{\,n+1,m+1}\cdot \mat n_i^{\,n+1,m}
}{
  \left|\delta_\rho\mat X_i^{\,n+1,m}\right|^2
}.
\end{align}
\end{subequations}

\item [(ii)]\textbf{Picard-iteration method for the ANISO-A-CN-FDM}:
\begin{subequations}\label{eq:anisotropic-CN-Picard}
\begin{align}
&\frac{\mat X_i^{n+1,m+1}-\mat X_i^{n}}{\Delta t}
=
\widehat{\mu}_i^{\,n+\frac12,m+1}\,
\widehat{\mat n}_i^{\,n+\frac12,m}
+
\widehat{\mathcal B}_i^{\,n+\frac12,m+1}\,
\widehat{\mat \tau}_i^{\,n+\frac12,m},
\\[4pt]
&\mathcal B_i^{n+1,m+1}
=
\frac{P_i^{n+1,m}}{\mathcal J}
\left(
M_i^{n+1,m}\left|\delta_\rho\mat X_i^{n+1,m}\right|
\right)^{-2}
\delta_\rho\!\left(
M_i^{n+1,m}\left|\delta_\rho\mat X_i^{n+1,m}\right|
\right),
\\[4pt]
&\mu_i^{\,n+1,m+1}
=
\left(\gamma\!\left(\theta_i^{\,n+1,m}\right)+\gamma''\!\left(\theta_i^{\,n+1,m}\right)\right)\,
\varkappa_i^{\,n+1,m+1}
-
\frac{
\left(
\gamma\!\left(\theta_i^{\,n+1,m}\right)\,\mat n_i^{\,n+1,m}
+ \gamma'\!\left(\theta_i^{\,n+1,m}\right)\,\mat\tau_i^{\,n+1,m}
\right)\cdot\mat e_1
}{
\mat X_i^{\,n+1,m}\cdot\mat e_1
},
\\[4pt]
&\varkappa_i^{\,n+1,m+1}
=
\frac{
\delta_{\rho\rho}\mat X_i^{\,n+1,m+1}\cdot \mat n_i^{\,n+1,m}
}{
\left|\delta_\rho\mat X_i^{\,n+1,m}\right|^2
}.
\end{align}
\end{subequations}
\end{itemize}

\subsection{Energy-stable methods}

The same Lagrange multiplier framework can be applied to the anisotropic
axisymmetric MCF considered in this work.
In this case, the evolution of the generating curve is given by
\begin{subequations}\label{eq:anisotropic-AMCF-LM-system}
\begin{align}
&\partial_t \mat X
=
\left(\mu-\lambda(t)\,\mu\right)\,\mat n
+\frac{P}{\mathcal J}\,
\left(M\left|\partial_\rho \mat X\right|\right)^{-2}
\partial_\rho\!\left(M\left|\partial_\rho \mat X\right|\right)\mat\tau,
\label{eq:aniso-LM-evolution}\\[4pt]
&\mu
=
\left(\gamma(\theta)+\gamma''(\theta)\right)\varkappa
-\frac{\left(\gamma(\theta)\,\mat n+\gamma'(\theta)\,\mat\tau\right)\cdot\mat e_1}
{\mat X\cdot\mat e_1},
\label{eq:aniso-mu}\\[4pt]
&\varkappa
=
\frac{\partial_{\rho\rho}\mat X\cdot\mat n}{\left|\partial_\rho \mat X\right|^2},
\label{eq:aniso-kappa}\\[4pt]
&\frac{\mathrm d}{\mathrm dt}W(t)=
-\,2\pi\int^1_{0}
		r\, \mu^{2}\,\left|\partial_\rho \mat X\right|\mathrm d \rho .
\label{eq:aniso-LM-energy}
\end{align}
\end{subequations}

\begin{lemma}
\label{lem:anisotropic-lm-consistency}
The anisotropic Lagrange multiplier system \eqref{eq:anisotropic-AMCF-LM-system}
is consistent with the original anisotropic system
\eqref{eq:anisotropic-AMCF-system}.
\end{lemma}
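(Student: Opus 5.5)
The argument will follow the proof of Lemma~\ref{lem:xx}, using the general energy identity from the proof of Lemma~\ref{lem:aniso-energy-dissipation} in place of the isotropic area identity. The key observation is that the computation leading to \eqref{eq:dWdt_mu} never used the particular form of the normal velocity. It only used the normal--tangential decomposition \eqref{eq:velocity-decomposition}, integration by parts on the periodic domain $\mathbb I$, and the geometric identities $\partial_\rho\theta=\varkappa|\partial_\rho\mat X|$ and $\partial_\rho\mat\tau=\varkappa|\partial_\rho\mat X|\mat n$. In that computation the tangential contributions cancel identically. Hence, for any sufficiently smooth evolution $\mat X(\rho,t)$ with $\mat X\cdot\mat e_1>0$, with $V_n:=\partial_t\mat X\cdot\mat n$, $r:=\mat X\cdot\mat e_1$ and $\mu$ given by \eqref{eq:aniso-mu},
\[
\frac{\mathrm d}{\mathrm dt}W(t)=-\,2\pi\int_0^1 r\,\mu\,V_n\,\left|\partial_\rho\mat X\right|\,\mathrm d\rho .
\]
I will state this as the first step. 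I will note explicitly that it holds irrespective of the tangential velocity, so that the redistribution term involving $P$, $\mathcal J$ and $M$ plays no role.

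Second, I take the normal component of \eqref{eq:aniso-LM-evolution}. Since $\mat\tau\cdot\mat n=0$, this gives $V_n=(1-\lambda(t))\mu$. Substituting into the identity above and using that $\lambda$ depends only on $t$ yields
\[
\frac{\mathrm d}{\mathrm dt}W(t)=-\,2\pi\int_0^1 r\,\mu^2\left|\partial_\rho\mat X\right|\mathrm d\rho+2\pi\lambda(t)\int_0^1 r\,\mu^2\left|\partial_\rho\mat X\right|\mathrm d\rho .
\]

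Third, I compare this with the constraint \eqref{eq:aniso-LM-energy}. The comparison gives $\lambda(t)\,\mathcal Q(t)=0$, where $\mathcal Q(t):=2\pi\int_0^1 r\,\mu^2|\partial_\rho\mat X|\,\mathrm d\rho\ge 0$.

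The main subtlety, which the isotropic proof passed over, is concluding $\lambda\equiv 0$ from this relation. I will handle it by cases. If $\mathcal Q(t)>0$, then $\lambda(t)=0$ directly. If $\mathcal Q(t)=0$, then $\mu\equiv0$ on $\Gamma(t)$, because $r>0$ and $|\partial_\rho\mat X|>0$. In that case the normal velocity $(1-\lambda)\mu$ vanishes for every choice of $\lambda$, so the value of $\lambda$ is irrelevant and may be taken to be $0$. In either case \eqref{eq:aniso-LM-evolution} reduces to \eqref{eq:unified_velocity}, with \eqref{eq:aniso-mu} and \eqref{eq:aniso-kappa} unchanged. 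Therefore \eqref{eq:anisotropic-AMCF-LM-system} is consistent with \eqref{eq:anisotropic-AMCF-system}, and the constraint \eqref{eq:aniso-LM-energy} is exactly the dissipation law of Lemma~\ref{lem:aniso-energy-dissipation}.
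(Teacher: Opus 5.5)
Your proposal is correct and follows the paper's proof essentially step for step. The paper likewise uses the identity $\frac{\mathrm d}{\mathrm dt}W=-2\pi\int_0^1 r\,\mu\,V_n\,|\partial_\rho\mat X|\,\mathrm d\rho$, substitutes $V_n=(1-\lambda(t))\mu$, and compares the result with \eqref{eq:aniso-LM-energy}. Your case distinction for $\mathcal Q(t)=0$, where $\mu\equiv 0$ and $\lambda$ is undetermined but irrelevant, fills in a step that the paper only asserts, namely that the comparison yields $\lambda(t)\equiv 0$.
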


\begin{proof}
We denote the normal velocity by
\( V_n := \partial_t \mat X \cdot \mat n \).
For anisotropic axisymmetric MCF, the surface energy
\( W(t) \) satisfies
\[
\frac{\mathrm d}{\mathrm dt}W(t)
=
-\,2\pi \int^1_{0} r\,\mu\,V_n \,\left|\partial_\rho \mat X\right|\mathrm d \rho,
\]
where \( \mu \) is the anisotropic chemical potential defined in
\eqref{eq:anisotropic-AMCF-system}.
In the Lagrange multiplier formulation
\eqref{eq:anisotropic-AMCF-LM-system}, the normal velocity is
\( V_n = (1-\lambda(t))\,\mu \).
Substituting this relation gives
\[
\frac{\mathrm d}{\mathrm dt}W(t)
=
-\,2\pi \int^1_{0} r\,\mu^2 \,\left|\partial_\rho \mat X\right|\mathrm d \rho
+
2\pi\,\lambda(t)
\int^1_{0} r\,\mu^2 \,\left|\partial_\rho \mat X\right|\mathrm d \rho.
\]
Comparing with the energy dissipation law
\eqref{eq:aniso-LM-energy}, we can obtain
\( \lambda(t) \equiv 0 \).
This proves the consistency of the anisotropic Lagrange multiplier
formulation.
\end{proof}

Similar to the isotropic case, based on the anisotropic Lagrange multiplier
formulation, we now construct fully discrete finite difference schemes for the
anisotropic system. 
\begin{itemize}
\item[(i)]\textbf{BDFk schemes ($k=1,2$) for the anisotropic system with
Lagrange multiplier (referred to the ANISO-A-LM-BDFk-FDMs)}:
\begin{subequations}\label{eq:anisotropic-BDFk-LM}
\begin{align}
&\frac{1}{\Delta t}\sum_{\rho=0}^{k}
\alpha_\rho\,\mat X_i^{\,n+1-\rho}
=
\left(
\mu_i^{\,n+1}-\lambda^{n+1}\,\mu_i^{\,n+1}
\right)\mat n_i^{\,n+1}
+
\frac{P_i^{\,n+1}}{\mathcal J}
\left(
M_i^{\,n+1}\left|\delta_\rho\mat X_i^{\,n+1}\right|
\right)^{-2}
\delta_\rho\!\left(
M_i^{\,n+1}\left|\delta_\rho\mat X_i^{\,n+1}\right|
\right)\mat\tau_i^{\,n+1},
\label{eq:aniso-BDFk-LM-evolution}
\\[4pt]
&\mu_i^{\,n+1}
=
\left(
\gamma\!\left(\theta_i^{\,n+1}\right)
+\gamma''\!\left(\theta_i^{\,n+1}\right)
\right)\varkappa_i^{\,n+1}
-
\frac{
\left(
\gamma\!\left(\theta_i^{\,n+1}\right)\,\mat n_i^{\,n+1}
+\gamma'\!\left(\theta_i^{\,n+1}\right)\,\mat\tau_i^{\,n+1}
\right)\cdot\mat e_1
}{
\mat X_i^{\,n+1}\cdot\mat e_1
},
\label{eq:aniso-BDFk-LM-mu}
\\[4pt]
&\varkappa_i^{\,n+1}
=
\frac{
\delta_{\rho\rho}\mat X_i^{\,n+1}\cdot \mat n_i^{\,n+1}
}{
\left|\delta_\rho\mat X_i^{\,n+1}\right|^2
},
\label{eq:aniso-BDFk-LM-kappa}
\\[4pt]
&\frac{1}{\Delta t}\sum_{\rho=0}^{k}
\alpha_\rho\,W^{\,n+1-\rho}
=
-\,2\pi\int^1_{0}
\left(\mat X^{n+1}\cdot\mat e_1\right)\,\left(
\mu^{\,n+1}\right)^{2}\,
\left|\partial_\rho \mat X^{n+1}\right|\,
\mathrm d\rho .
\label{eq:aniso-BDFk-LM-energy}
\end{align}
\end{subequations}

\item [(ii)]\textbf{Crank-Nicolson scheme for the anisotropic system with Lagrange multiplier
(referred to as the ANISO-A-LM-CN-FDM)}:
\begin{subequations}\label{eq:anisotropic-CN-LM}
\begin{align}
&\frac{\mat X_i^{n+1}-\mat X_i^{n}}{\Delta t}
=
\left(
\widehat{\mu}_i^{\,n+\frac12}
-
\lambda^{n+\frac12}\,
\widehat{\mu}_i^{\,n+\frac12}
\right)
\widehat{\mat n}_i^{\,n+\frac12}
+
\widehat{\mathcal B}_i^{\,n+\frac12}\,
\widehat{\mat\tau}_i^{\,n+\frac12},
\label{eq:aniso-CN-LM-evolution}
\\[4pt]
&\mathcal B_i^{n+1}
=
\frac{P_i^{n+1}}{\mathcal J}
\left(
M_i^{n+1}\left|\delta_\rho\mat X_i^{n+1}\right|
\right)^{-2}
\delta_\rho\!\left(
M_i^{n+1}\left|\delta_\rho\mat X_i^{n+1}\right|
\right),
\label{eq:aniso-CN-LM-B}
\\[4pt]
&\mu_i^{\,n+1}
=
\left(\gamma\!\left(\theta_i^{\,n+1}\right)+\gamma''\!\left(\theta_i^{\,n+1}\right)\right)\,
\varkappa_i^{\,n+1}
-
\frac{
\left(
\gamma\!\left(\theta_i^{\,n+1}\right)\,\mat n_i^{\,n+1}
+\gamma'\!\left(\theta_i^{\,n+1}\right)\,\mat\tau_i^{\,n+1}
\right)\cdot\mat e_1
}{
\mat X_i^{\,n+1}\cdot\mat e_1
},
\label{eq:aniso-CN-LM-mu}
\\[4pt]
&\varkappa_i^{\,n+1}
=
\frac{
\delta_{\rho\rho}\mat X_i^{\,n+1}\cdot \mat n_i^{\,n+1}
}{
\left|\delta_\rho\mat X_i^{\,n+1}\right|^2
},
\label{eq:aniso-CN-LM-kappa}
\\[4pt]
&\frac{W^{n+1}-W^{n}}{\Delta t}
=
-\,2\pi\int^1_{0}
\left(\widehat{\mat X}_i^{n+\frac12}\cdot\mat e_1\right)\,
\left(\widehat{\mu}_i^{\,n+\frac12}\right)^{2}\,
\left|\partial_\rho \widehat{\mat X}_i^{\,n+\frac12}\right|\,
\mathrm d\rho .
\label{eq:aniso-CN-LM-energy}
\end{align}
\end{subequations}
\end{itemize}

\begin{thm}
\label{thm:aniso-energy}
The fully discrete schemes
\eqref{eq:anisotropic-BDFk-LM} and \eqref{eq:anisotropic-CN-LM}
are unconditionally energy-stable, in the sense that
\[
W^{n+1} \le W^{n}, \qquad n \ge 0,
\]
where \( W^n \) denotes the discrete anisotropic surface energy.
\end{thm}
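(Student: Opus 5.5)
The plan mirrors the proof of Theorem~\ref{thm:energy_dissipation}, replacing the area $A^n$ by the discrete anisotropic energy $W^n$ and $V$ by $\mu$. The key observation is that the discrete energy equations \eqref{eq:aniso-BDFk-LM-energy} and \eqref{eq:aniso-CN-LM-energy} are imposed as constraints. The Lagrange multiplier $\lambda^{n+1}$ (resp.\ $\lambda^{n+\frac12}$) is the extra unknown that makes the enlarged system solvable. Hence, for any solution of the scheme, the discrete time difference of $W$ equals a weighted integral of $\mu^2$. I will assume, as in the isotropic case, that the computed solution satisfies $\mat X^{n+1}\cdot\mat e_1>0$ and $|\partial_\rho\mat X^{n+1}|>0$, and likewise for the averaged quantities in the Crank--Nicolson case. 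Under these assumptions the right-hand side $-2\pi\int_0^1 (\mat X\cdot\mat e_1)\mu^2|\partial_\rho\mat X|\,\mathrm d\rho$ is nonpositive, since $\mu^2\ge 0$.

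\textbf{BDF1.} Equation \eqref{eq:aniso-BDFk-LM-energy} with $\alpha_0=1$, $\alpha_1=-1$ gives $(W^{n+1}-W^n)/\Delta t\le 0$. Thus $W^{n+1}\le W^n$ for every $n\ge0$, with no restriction on $\Delta t$.

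\textbf{BDF2.} The first step is computed with ANISO-A-LM-BDF1-FDM, so $W^1\le W^0$. For $n\ge1$, \eqref{eq:aniso-BDFk-LM-energy} gives $\tfrac32W^{n+1}-2W^n+\tfrac12W^{n-1}\le0$. This rearranges to
\[
\tfrac32\left(W^{n+1}-W^n\right)\le\tfrac12\left(W^n-W^{n-1}\right).
\]
Induction on $n$, starting from $W^1\le W^0$, then yields $W^{n+1}\le W^n$ for all $n$.

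\textbf{Crank--Nicolson.} Equation \eqref{eq:aniso-CN-LM-energy} gives $(W^{n+1}-W^n)/\Delta t = -2\pi\int_0^1(\widehat{\mat X}^{n+\frac12}\cdot\mat e_1)(\widehat\mu^{n+\frac12})^2|\partial_\rho\widehat{\mat X}^{n+\frac12}|\,\mathrm d\rho\le0$. This holds provided the averaged curve stays off the axis, i.e.\ $\widehat{\mat X}^{n+\frac12}\cdot\mat e_1>0$. That follows from positivity of $\mat X^{n}\cdot\mat e_1$ and $\mat X^{n+1}\cdot\mat e_1$, because the average of two positive numbers is positive. The factor $|\partial_\rho\widehat{\mat X}^{n+\frac12}|$ is nonnegative automatically.

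The main point requiring care is not the inequality itself, which is immediate, but the solvability and positivity hypotheses. The argument presupposes that the nonlinear system, including the scalar unknown $\lambda$, admits a solution and that the radial coordinate remains positive. As in Theorem~\ref{thm:energy_dissipation}, I will state these as standing assumptions rather than prove them. I will also stress that the anisotropy enters only through $\mu$, which appears squared, so no convexity condition on $\gamma$ is needed for the decay.
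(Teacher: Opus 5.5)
Your proposal is correct and follows the paper's proof essentially step for step. The imposed discrete energy relation has a nonpositive right-hand side once $\mat X\cdot\mat e_1>0$ and $|\partial_\rho\mat X|>0$, which gives BDF1 and Crank--Nicolson directly; BDF2 then follows from a BDF1 first step, the rearrangement $\tfrac32(W^{n+1}-W^n)\le\tfrac12(W^n-W^{n-1})$, and induction. Your explicit remarks on solvability of the system enlarged by $\lambda$, and on positivity of the averaged radius in the Crank--Nicolson case, state assumptions that the paper uses without saying so.
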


\begin{proof}
For the ANISO-A-LM-BDF1-FDM, it follows from
\eqref{eq:anisotropic-BDFk-LM} that
\[
\frac{W^{n+1}-W^{n}}{\Delta t}
=
-\,2\pi
\int^1_{0}
\left(\mat X^{n+1}\cdot\mat e_1\right)
\left(\mu^{n+1}\right)^{2}
\left|\partial_\rho \mat X^{n+1}\right|
\,\mathrm d\rho .
\]
Under the assumptions that
\( \mat X^{n+1}\cdot\mat e_1 > 0 \) and
\( \left|\partial_\rho \mat X^{n+1}\right| > 0 \),
the right-hand side is non-positive, which immediately implies
\( W^{n+1} \le W^{n} \).
In addition, for the ANISO-A-LM-BDF2-FDM, the first time step is computed
by the ANISO-A-LM-BDF1-FDM, yielding \( W^{1} \le W^{0} \).
By using \eqref{eq:anisotropic-BDFk-LM}, we further obtain
\[
\frac{1}{\Delta t}
\left(
\frac{3}{2}W^{n+1}-2W^{n}+\frac{1}{2}W^{n-1}
\right)
=
-\,2\pi
\int^1_{0}
\left(\mat X^{n+1}\cdot\mat e_1\right)
\left(\mu^{n+1}\right)^{2}
\left|\partial_\rho \mat X^{n+1}\right|
\,\mathrm d\rho
\le 0 .
\]
This inequality implies
\[
\frac{3}{2}\left(W^{n+1}-W^{n}\right)
\le
\frac{1}{2}\left(W^{n}-W^{n-1}\right).
\]
We now proceed by mathematical induction.
Since \( W^{1} \le W^{0} \), assume that
\(
W^{j} \le W^{j-1}
\)
for \( j=1,2,\ldots,n \).
Then the above inequality immediately implies
\( W^{n+1} \le W^{n} \).
Therefore, the energy dissipation property holds for all \( n \ge 1 \).

Finally, for the ANISO-A-LM-CN-FDM,
\eqref{eq:anisotropic-CN-LM} directly implies
\[
\frac{W^{n+1}-W^{n}}{\Delta t}
=
-\,2\pi
\int^1_{0}
\left(\widehat{\mat X}^{n+\frac12}\cdot\mat e_1\right)
\left(\mu^{n+\frac12}\right)^{2}
\left|\partial_\rho \mat X^{n+\frac12}\right|
\,\mathrm d\rho
\le 0 ,
\]
and hence \( W^{n+1} \le W^{n} \).
This completes the proof.
\end{proof}

\section{Design of the monitor function}\label{sec4}
The performance of adaptive moving mesh methods is governed by the choice of the monitor function, as it dictates how mesh points are redistributed in response to evolving geometric features. An ill-suited monitor function can lead to excessive clustering or insufficient resolution, thereby compromising both accuracy and robustness. Consequently, this section focuses on the systematic design of monitor functions based on the geometric properties of evolving curves.

In the absence of reliable error estimates for the numerical approximation $\Gamma_h(t)$, the construction of such monitor functions is typically guided by geometric interpolation error analysis. The primary objective is to employ equidistribution to rigorously control the deviation between the smooth curve and its piecewise linear approximation. Following this paradigm, Mackenzie et al.~\cite{mackenzie2019adaptive} utilized the maximal distance between the curve and its polygonal interpolant as an error metric to derive a curvature-based monitor function.Building upon this framework, we adopt the same error criterion but conduct a more granular analysis of its local structure. For sufficiently smooth curves, retaining higher-order terms in the local expansion yields a sharper error estimate. This result provides a solid theoretical justification for incorporating higher-order geometric information, specifically the spatial variation of curvature, into the monitor function design.

To proceed, in Fig.\ref{fig:monitor_fun} we let $\Gamma$ be a smooth planar curve and consider an arc segment $\Gamma_i \subset \Gamma$ with endpoints $\mat X_i$ and $\mat X_{i+1}$.
Denote by $\Gamma_{h,i}$ the straight line segment connecting these two points. We define the geometric error at a point $\mat X \in \Gamma_i$ as its orthogonal distance to $\Gamma_{h,i}$.
Since this distance is invariant under rigid motions, we may assume, without loss of generality, that $\mat X_i$ is located at the origin and that $\Gamma_{h,i}$ is aligned with the horizontal axis.
Under this assumption, the curve segment $\Gamma_i$ can locally be represented as the graph of a smooth function $\overline{\Gamma}(\overline{x})$ over the interval $[0,h_i]$, where $h_i = \| \mat X_{i+1} - \mat X_i \|$.
The maximal deviation between $\Gamma_i$ and its linear interpolant is therefore given by
\[
\max_{0 \le \overline{x} \le h_i} \left| \overline{\Gamma}(\overline{x}) \right|.
\]
   \begin{figure} [H]
		\centering
		\includegraphics[width=0.8\linewidth]{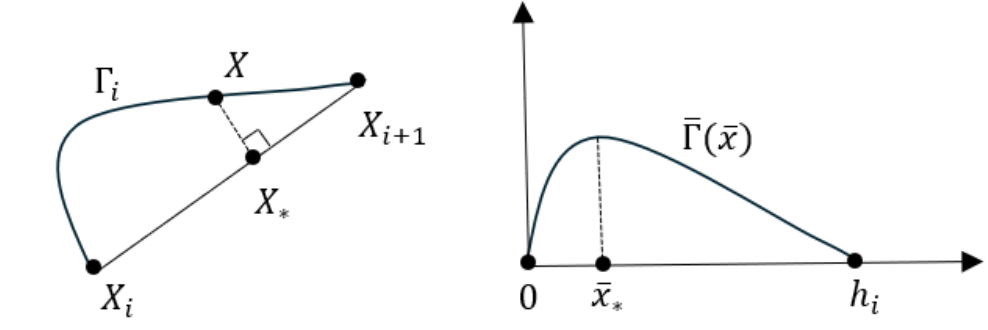}
		\caption{Left: Segment of a smooth curve $\Gamma_i$ and the interpolating linear approximation connecting mesh points $X_i$ and $X_{i+1}$. The geometric error at point $X$ is defined as the orthogonal distance from $X$ to $X_*$. Right: The translated and rotated segment is transformed into the graph $\overline{\Gamma}(\overline{x})$. The maximal distance corresponds to the absolute maximum value at the critical point $\overline{x}_* \in [0, h_i]$.}
		\label{fig:monitor_fun}
	\end{figure}
Assume that this maximum is attained at an interior point $\overline{x}_* \in (0,h_i)$. At this critical point, the tangent to the curve is parallel to the chord $\Gamma_{h,i}$, which implies the condition $\overline{\Gamma}'(\overline{x}_*) = 0$.
A local Taylor expansion of $\overline{\Gamma}(\overline{x})$ about $\overline{x}_*$, evaluated at the endpoint $\overline{x}=0$ (where $\overline{\Gamma}(0)=0$), yields the asymptotic estimate:
\begin{equation}
	\overline{\Gamma}(\overline{x}_*) \approx \frac{\overline{x}_*^2}{2}\overline{\Gamma}''(\overline{x}_*) - \frac{\overline{x}_*^3}{6}\overline{\Gamma}'''(\overline{x}_*).
	\label{eq:taylor_expansion}
\end{equation}
Taking the absolute value and retaining terms up to the third order, we can analyze the error components.
First, recall that the curvature magnitude for a planar graph is given by 
\begin{equation} \label{eq:kappa_gamma}
    \left|\varkappa\right| = \frac{\left|\overline{\Gamma}''\right|}{\left(1 + \left(\overline{\Gamma}'\right)^2\right)^{\frac32}}.
\end{equation}
At $\overline{x}_*$, the vanishing first derivative simplifies \eqref{eq:kappa_gamma} to:
\begin{equation}
	\left|\overline{\Gamma}''(\overline{x}_*)\right| = \left|\varkappa(\overline{x}_*)\right|.
	\label{eq:second_order_relation}
\end{equation}
To estimate the third-order term, let the curve be parametrized by arc-length $s$ as $\mat X(s)$.
Differentiation of the Frenet-Serret formulas for planar curves, $\mat X''(s) = \varkappa(s)\mathbf{n}(s)$, leads to:
\begin{equation}
	\mat X'''(s) = \partial_s \varkappa(s)\mathbf{n}(s) - \varkappa^2(s)\mathbf{t}(s),
\end{equation}
where $\mathbf{t}(s)$ and $\mathbf{n}(s)$ denote the unit tangent and normal vectors, respectively.
Using the triangle inequality, we obtain the bound:
\begin{equation}
	\left|\mat X'''(s)\right| \le \left|\partial_s \varkappa(s)\right| + \varkappa^2(s).
\end{equation}
Under the assumption of locally small slopes, the third derivative in the graph representation, $\overline{\Gamma}'''(\overline{x}_*)$, is of the same order as the intrinsic derivative $\mat X'''(s)$.
Combining this with (\ref{eq:taylor_expansion}) and (\ref{eq:second_order_relation}), and noting that $\overline{x}_* \le h_i$, we derive the following upper bound for the local geometric interpolation error:
\begin{equation}
	\max_{\overline{x} \in [0, h_i]} \left|\overline{\Gamma}(\overline{x})\right| \le C_1 h_i^2 \left|\varkappa\right| + C_2 h_i^3 \left|\partial_s \varkappa\right| + C_3 h_i^3 \varkappa^2,
	\label{eq:error_bound}
\end{equation}
where $C_1, C_2,$ and $C_3$ are positive constants depending only on the regularity of the curve.
This estimate highlights the hierarchical composition of the error. In regions characterized by high curvature or rapid curvature variation (large $|\partial_s \varkappa|$), higher-order geometric terms become dominant. 

Motivated by the local error bound derived in (\ref{eq:error_bound}), we propose a novel adaptive monitor function designed to control the geometric approximation error. Since our analysis reveals that the error is dominated by terms involving the curvature magnitude $\varkappa$, its derivative $\partial_s \varkappa$, and the square of the curvature $\varkappa^2$, we construct the monitor function as follows:\begin{equation}M(s,t) = M_{floor} + \sqrt{a |\varkappa| + b |\partial_s \varkappa| + c \varkappa^2},\label{eq:new_monitor}\end{equation}where $a, b,$ and $c$ are positive weighting parameters that balance the contribution of each geometric feature. Similar to the approach in \cite{mackenzie2019adaptive}, a positive regularization parameter $M_{floor}$ is included to prevent mesh starvation in flat regions where the curvature and its derivative vanish, thereby ensuring the stability of the numerical scheme.
\begin{rem}\label{rem:monitor_function}
    The proposed monitor function offers distinct advantages over standard curvature-based methods. First, by explicitly including the derivative term $|\partial_s \varkappa|$, the method automatically refines the mesh in regions of rapid curvature variation—such as transition zones near inflection points—rather than focusing solely on regions of high curvature magnitude. This directly addresses the higher-order geometric error components identified in our analysis. Second, the combination of these terms aligns with the equidistribution principle, facilitating a more efficient minimization of the global geometric interpolation error compared to uniform arc-length or standard curvature-based meshes.
\end{rem}

\begin{rem}
All adaptive numerical schemes in this paper are based on fully implicit formulations.
While iterative solvers are required, the iteration counts observed in practice are moderate, and the resulting computational overhead is limited due to the low cost of each finite-difference-based iteration.
Numerical experiments with linearized schemes indicate comparable performance, with adaptivity continuing to provide clear benefits.
In view of the inherent stability of implicit schemes and for brevity, we do not pursue linearized methods further, but emphasize that they remain a feasible alternative.
\end{rem}

\section{Numerical Experiments}\label{sec5}

In this section, we present a series of numerical experiments to validate the effectiveness and robustness of the proposed numerical schemes for both isotropic and anisotropic axisymmetric MCF.
The numerical results are designed to examine key properties of the methods, including convergence behavior, energy stability,  and the performance of adaptive mesh redistribution. 
By comparing the numerical solutions with exact solutions or highly resolved reference solutions, we systematically assess the accuracy and efficiency of different time-stepping schemes, BDFk $(k = 1,2)$, and the Crank-Nicolson method, and demonstrate the advantages of the adaptive approach in resolving geometric features.
The initial computational mesh is generated using the de Boor algorithm to ensure a smooth and well-distributed parametrization of the initial curve \cite{huang1994moving,huang2010adaptive,mackenzie2019adaptive}.
The stopping criterion for the iteration is set to tol = $10^{-8}$, and we select the relax time constant $\mathcal{J}=0.5$.

When testing the errors and corresponding convergence rates, a sequence of refined meshes is employed to measure the numerical errors. Let \(\left(h_\ell,\Delta t_\ell\right)\) denote the spatial mesh size and time step at refinement level \(\ell\). Let \(\mat U_{h_\ell}^{N_\ell}\) be the corresponding numerical solution at \(t=T\), where \(N_\ell=T/\Delta t_\ell\) and \(\mat U\) denotes the combined discrete unknowns, defined by
\[
\mat U =
\left\{
\begin{array}{ll}
	\left(\mat X,\, V,\, \varkappa\right),
	& \text{ISO-A-BDFk-FDMs}, \\[4pt]
	\left(\mat X,\, V,\, \mathcal{B},\, \varkappa\right),
	& \text{ISO-A-CN-FDM}, \\[4pt]
	\left(\mat X,\, V,\, \varkappa,\, \mu\right),
	& \text{ANISO-A-BDFk-FDMs}, \\[4pt]
	\left(\mat X,\, V,\, \mathcal{B},\, \varkappa,\, \mu\right),
	& \text{ANISO-A-CN-FDM}.
\end{array}
\right.
\]
The numerical error is defined by comparing two consecutive refinement levels,
\begin{align} \label{eq:error_definition}
	e_\ell^h
	=
	\left\|
	\mat U_{h_\ell}^{N_\ell}
	-
	\mat U_{h_{\ell+1}}^{N_{\ell+1}}
	\right\|_{\infty},
\end{align}
and the convergence order is computed as
\begin{align} \label{eq:order_definition}
	\mathrm{order}_\ell
	=
	\frac{
		\log\!\left(
		e_\ell^h / e_{\ell+1}^h
		\right)
	}{
		\log\!\left(
		\Delta t_\ell / \Delta t_{\ell+1}
		\right)
	}.
\end{align}
To facilitate the simultaneous measurement of temporal and spatial convergence rates, a coupled refinement strategy between the spatial mesh size and the time step is imposed. Accordingly, the pair \(\left(h_\ell,\Delta t_\ell\right)\) is updated as
\begin{align} \label{eq:refinement_strategy}
	\left(h_{\ell+1},\Delta t_{\ell+1}\right)=
	\begin{cases}
		\left(h_\ell/2,\;\Delta t_\ell/4\right), & \text{BDF1 method},\\[4pt]
		\left(h_\ell/2,\;\Delta t_\ell/2\right), & \text{BDF2 method},\\[4pt]
		\left(h_\ell/2,\;\Delta t_\ell/2\right), & \text{Crank-Nicolson method}.
	\end{cases}
\end{align}

To better showcase the advantages of the adaptive methods, we also introduce the corresponding finite-difference discretizations without explicit adaptive tangential mesh redistribution. The methods include 

\begin{itemize}
	\item [(i)]\textbf{BDFk methods ($k=1,2$) for the isotropic system 
		(referred to as the ISO-BDFk-FDMs)}:
	\begin{subequations}\label{eq:isotropic-BDFk-uniform}
		\begin{align}
			&\frac{1}{\Delta t}\sum_{\rho=0}^{k}
			\alpha_\rho\,\mat X_i^{\,n+1-\rho}\cdot\mat n_i^{n+1}
			=
			V_i^{n+1},
			\\[4pt]
			&V_i^{n+1}
			=
			\varkappa_i^{n+1}
			-
			\frac{\mat n_i^{n+1}\cdot\mat e_1}
			{\mat X_i^{n+1}\cdot\mat e_1},
			\\[4pt]
			&\varkappa_i^{\,n+1}\mat{n}_i^{\,n+1}
			= \frac{\delta_{\rho\rho}\mat X_i^{\,n+1}}{\left|\delta_{\rho}\mat X_i^{\,n+1}\right|^2}
			-
			\frac{\mat{\tau}_i^{\,n+1}\cdot 
				\delta_{\rho\rho}\mat X_i^{\,n+1}}
			{\left|\delta_{\rho}\mat X_i^{\,n+1}\right|^{3}}
			\,\delta_{\rho}\mat X_i^{\,n+1}.
		\end{align}
	\end{subequations}
	
	\item [(ii)]\textbf{Crank-Nicolson method for the isotropic system 
		(referred to as the ISO-CN-FDM)}:
	\begin{subequations}\label{eq:isotropic-CN-uniform}
		\begin{align}
			&\frac{\mat X_i^{n+1}-\mat X_i^{n}}{\Delta t}\cdot
			\widehat{\mat n}_i^{\,n+\frac12}
			=
			\widehat{V}_i^{n+\frac12},
			\\[4pt]
			&V_i^{n+1}
			=
			\varkappa_i^{n+1}
			-
			\frac{
				\mat n_i^{\,n+1}\cdot\mat e_1}
			{\mat X_i^{\,n+1}\cdot\mat e_1},
			\\[4pt]
			&\widehat{\varkappa}_i^{\,n+\frac{1}{2}}\widehat{\mat{n}}_i^{\,n+\frac{1}{2}}
			= \frac{\delta_{\rho\rho}\widehat{\mat X_i}^{\,n+\frac{1}{2}}}{\left|\delta_{\rho}\widehat{\mat X_i}^{\,n+\frac{1}{2}}\right|^2}
			-
			\frac{\mat{\widehat{\tau}}_i^{\,n+\frac{1}{2}}\cdot 
				\delta_{\rho\rho}\widehat{\mat X_i}^{\,n+\frac{1}{2}}}
			{\left|\delta_{\rho}\widehat{\mat X_i}^{\,n+\frac{1}{2}}\right|^{3}}
			\,\delta_{\rho}\widehat{\mat X_i}^{\,n+\frac{1}{2}}.
		\end{align}
	\end{subequations}
	
		\item [(iii)]
		\textbf{BDFk methods ($k=1,2$) for the anisotropic system 
			(referred to as the ANISO-BDFk-FDMs)}:
		\begin{subequations}\label{eq:aniso-BDFk-BGN}
			\begin{align}
				&\frac{1}{\Delta t}\sum_{\rho=0}^{k}
				\alpha_\rho\,\mat X_i^{\,n+1-\rho}\cdot\mat n_i^{\,n+1}
				=
				\mu_i^{\,n+1},
				\\[4pt]
				&\mu_i^{\,n+1}
				=
				\left(\gamma(\theta_i^{\,n+1,m})+\gamma''(\theta_i^{\,n+1,m})\right)\,
				\varkappa_i^{\,n+1}
				-
				\frac{
					\left(
					\gamma(\theta_i^{\,n+1,m})\,\mat n_i^{\,n+1}
					+ \gamma'(\theta_i^{\,n+1,m})\,\mat\tau_i^{\,n+1}
					\right)\cdot\mat e_1
				}{
					\mat X_i^{\,n+1}\cdot\mat e_1
				},
				\\[4pt]
				&\varkappa_i^{\,n+1}\mat{n}_i^{\,n+1}
				= \frac{\delta_{\rho\rho}\mat X_i^{\,n+1}}{\left|\delta_{\rho}\mat X_i^{\,n+1}\right|^2}
				-
				\frac{\mat{\tau}_i^{\,n+1}\cdot 
					\delta_{\rho\rho}\mat X_i^{\,n+1}}
				{\left|\delta_{\rho}\mat X_i^{\,n+1}\right|^{3}}
				\,\delta_{\rho}\mat X_i^{\,n+1}.
			\end{align}
		\end{subequations}
		
		\item [(iv)]\textbf{Crank-Nicolson method for the anisotropic system 
			(referred to as the ANISO-CN-FDM)}:
		\begin{subequations}\label{eq:anisotropic-CN-standard}
			\begin{align}
				&\frac{\mat X_i^{n+1}-\mat X_i^{n}}{\Delta t}\cdot\widehat{\mat n}_i^{\,n+\frac12}
				=
				\widehat{\mu}_i^{\,n+\frac12},
				\\[4pt]
				&\mu_i^{\,n+1}
				=
				\left(\gamma(\theta_i^{\,n+1,m})+\gamma''(\theta_i^{\,n+1,m})\right)\,
				\varkappa_i^{\,n+1}
				-
				\frac{
					\left(
					\gamma(\theta_i^{\,n+1,m})\,\mat n_i^{\,n+1}
					+ \gamma'(\theta_i^{\,n+1,m})\,\mat\tau_i^{\,n+1}
					\right)\cdot\mat e_1
				}{
					\mat X_i^{\,n+1}\cdot\mat e_1
				},
				\\[4pt]
				&\widehat{\varkappa}_i^{\,n+\frac{1}{2}}\widehat{\mat{n}}_i^{\,n+\frac{1}{2}}
				= \frac{\delta_{\rho\rho}\widehat{\mat X_i}^{\,n+\frac{1}{2}}}{\left|\delta_{\rho}\widehat{\mat X_i}^{\,n+\frac{1}{2}}\right|^2}
				-
				\frac{\mat{\widehat{\tau}}_i^{\,n+\frac{1}{2}}\cdot 
					\delta_{\rho\rho}\widehat{\mat X_i}^{\,n+\frac{1}{2}}}
				{\left|\delta_{\rho}\widehat{\mat X_i}^{\,n+\frac{1}{2}}\right|^{3}}
				\,\delta_{\rho}\widehat{\mat X_i}^{\,n+\frac{1}{2}}.
			\end{align}
		\end{subequations}
\end{itemize}
To assess the mesh quality quantitatively, several quantitative measures are employed.
Specifically, we consider the ratio
\[
R_1(\Delta s) = \frac{\max_i \Delta s_i}{\min_i \Delta s_i},
\]
which measures the uniformity of the discrete arc-length distribution, and the
monitor-weighted ratio
\[
R_2(M_f,\Delta s) =
\frac{\max_i \left(M_{f,i}\,\Delta s_i\right)}
{\min_i \left(M_{f,i}\,\Delta s_i\right)},
\]
which evaluates the equidistribution of the mesh with respect to the monitor
function $M_f$.
Clearly, values of $R_1$ and $R_2$ close to one indicate good mesh quality,
while larger values reflect increasing mesh distortion.

\subsection{Numerical experiments for the isotropic axisymmetric MCF} \label{sec5-isotropic}

\begin{example}[Convergence tests]
	In this example, we investigate the temporal-spatial convergence behavior
	of the proposed numerical schemes for the isotropic axisymmetric MCF.
	The initial generating curve is chosen as
	\begin{align} \label{eq:initial_curve}
		x\!\left(\rho,0\right)=4+2\cos\!\left(2\pi\rho\right),\qquad
		y\!\left(\rho,0\right)=\sin\!\left(2\pi\rho\right),\qquad \rho\in[0,1],
	\end{align}
	and the final time is set to \(T=0.4\). Fig.~\ref{fig:error_time_ISO} presents the log-log plots of the numerical errors versus the time step for the isotropic case. The results demonstrate that the ISO-A-BDF1-FDM achieves first-order convergence in time, while both the ISO-A-BDF2-FDM and ISO-A-CN-FDM achieve second-order convergence in time. Additionally, the spatial accuracy for all schemes reaches second-order. These results are entirely consistent with our expectations.
	
	\begin{figure}[htbp]\label{fig:error_time_ISO}
		\centering
		\includegraphics[width=0.97\linewidth]{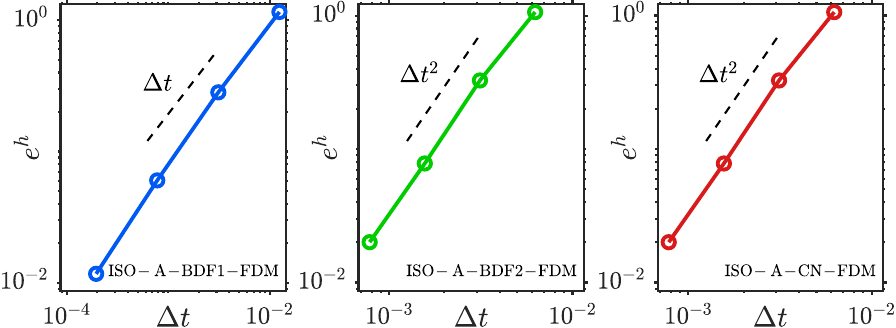}
		\hspace{0.08\linewidth} 
		\caption{
			Plots of the numerical error \(e^h\) versus the time step \(\Delta t\) are presented.
			From left to right, the results obtained with the ISO-A-BDF1-FDM, ISO-A-BDF2-FDM, and ISO-A-CN-FDM schemes are shown, respectively.
		}
	\end{figure}
\end{example}

\begin{example}  
	In this example, we investigate the mesh quality for several representative initial curves and highlight the importance of adaptive moving mesh methods. To simplify, we compare only the ISO-BDF1-FDM and ISO-A-BDF1-FDMs. The time evolution of the curves and the corresponding mesh quality indicators are presented in Figs.~\ref{fig:example2.1_mesh_quality}-\ref{fig:example2.3_mesh_quality}.
	The tests in this example include:
	\begin{itemize}
		\item First, we consider the evolution based on the initial curve with a localized geometric feature shown in Fig.~\ref{fig:example2.1_evolution}. As the evolution proceeds, the ISO-BDF1-FDM gradually develops a highly nonuniform distribution of mesh points. In contrast, the ISO-A-BDF1-FDM dynamically redistributes mesh points and maintains a significantly more uniform resolution along the curve. This qualitative difference is quantitatively confirmed by the mesh quality indicators in Fig.~\ref{fig:example2.1_mesh_quality}, where the values of \( R_1(\Delta s) \) and \( R_2(M_f, \Delta s) \) associated with the adaptive method remain significantly smaller than those of the non-adaptive scheme and gradually approach a steady value close to one.
		
		\item Next, we examine a more oscillatory initial curve, the evolution of which is shown in Fig.~\ref{fig:example2.2_evolution}. Due to the presence of rapidly varying curvature, the ISO-BDF1-FDM quickly suffers from severe mesh distortion, with mesh points clustering in some regions while becoming excessively sparse in others. As a result, the corresponding mesh quality indicators increase rapidly over time, as reported in Fig.~\ref{fig:example2.2_mesh_quality}. In contrast, the ISO-A-BDF1-FDM effectively concentrates mesh points in regions of large curvature while preserving a reasonable distribution elsewhere. As a result, the corresponding mesh quality indicators decrease steadily over time and stabilize at values close to one, indicating an increasingly uniform arc-length distribution and approximate monitor-based equidistribution. We can clearly see that, unlike ISO-A-BDF1-FDM, the evolution curve of ISO-BDF1-FDM is completely incorrect.

		\item Finally, we consider the evolution based on a highly nonconvex initial curve with complex geometric features, as illustrated in Fig.~\ref{fig:example2.3_evolution}. The mesh quality indicators are shown in Fig.~\ref{fig:example2.1_mesh_quality}. For this challenging configuration, the ISO-BDF1-FDM suffers from severe mesh degeneration at relatively early times, which eventually leads to numerical singularity and causes the simulation to break down at approximately \( t \approx 0.38 \). Consequently, no reliable mesh quality indicators can be obtained beyond this time for the non-adaptive method. In contrast, the ISO-A-BDF1-FDM remains numerically stable and allows the computation to proceed to much later times, up to about \( t = 0.7 \). The corresponding mesh quality indicators continue to decrease and gradually approach unity as time increases.
		
	\end{itemize}
	
	Overall, these numerical results demonstrate that the proposed ISO-A-BDF1-FDM provides an effective mesh redistribution mechanism and yields consistently superior mesh quality compared with the classical ISO-BDF1-FDM, particularly for curves with strong geometric complexity.

	\begin{figure}[htbp]
		\centering
		\includegraphics[width=0.38\linewidth]{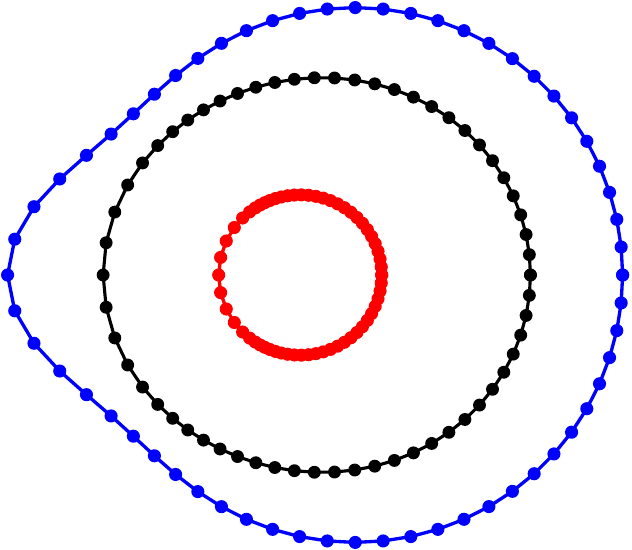}
		\hspace{0.08\linewidth} 
		\includegraphics[width=0.38\linewidth]{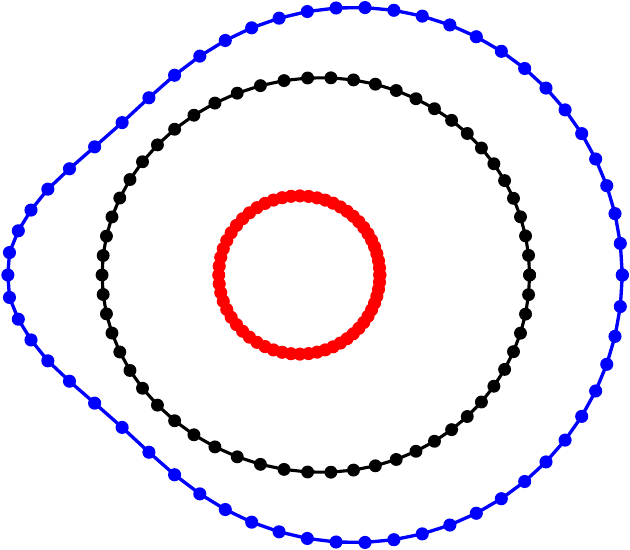}
		\caption{Curve evolution for the ISO-BDF1-FDM and ISO-A-BDF1-FDM, with the initial curve 
			$x = 4 + \left[1 + 0.3\exp\left(-\frac{(2\pi\rho - \pi)^{2}}{0.16}\right)\right]\cos (2\pi\rho), y = \left[1 + 0.3\exp\left(-\frac{(2\pi\rho - \pi)^{2}}{0.16}\right)\right]\sin (2\pi\rho)$.
			Left: ISO-BDF1-FDM, right: ISO-A-BDF1-FDM.
			Blue curves denote the initial curve,
			while black and red curves correspond to $t=0.25$, and $t=0.5$, respectively.}
		\label{fig:example2.1_evolution}
	\end{figure}
	
	\begin{figure}[htbp]
		\centering
		\includegraphics[width=0.44\linewidth]{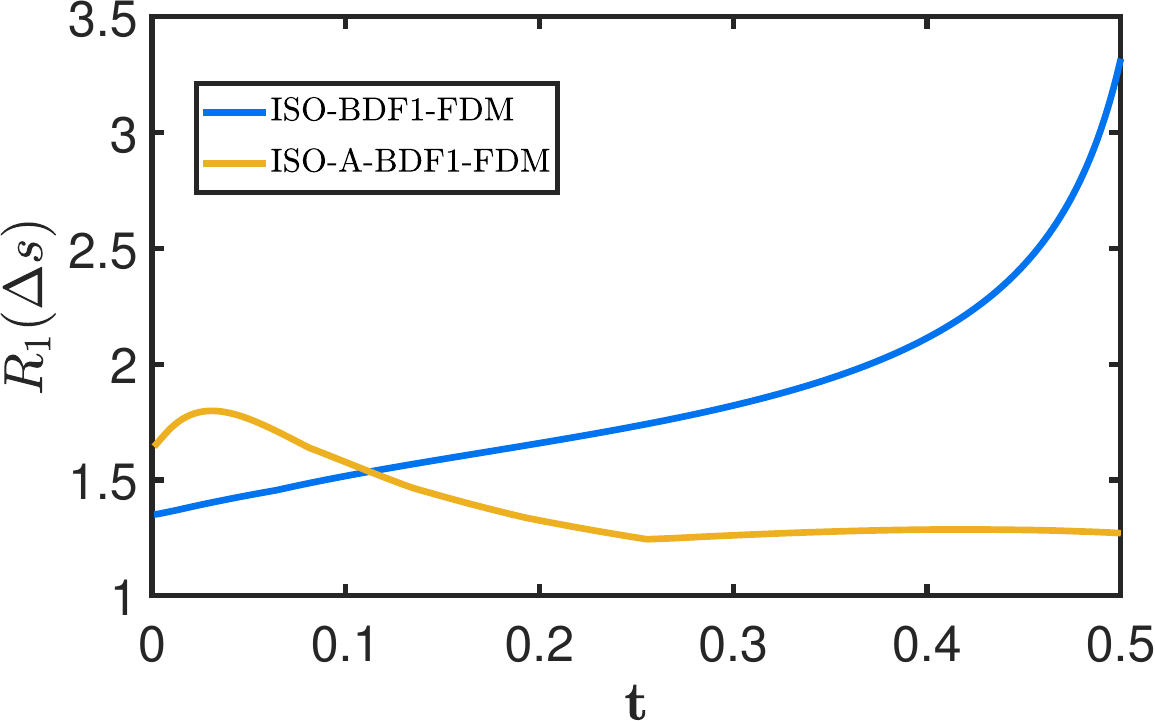}
		\hspace{0.04\linewidth} 
		\includegraphics[width=0.44\linewidth]{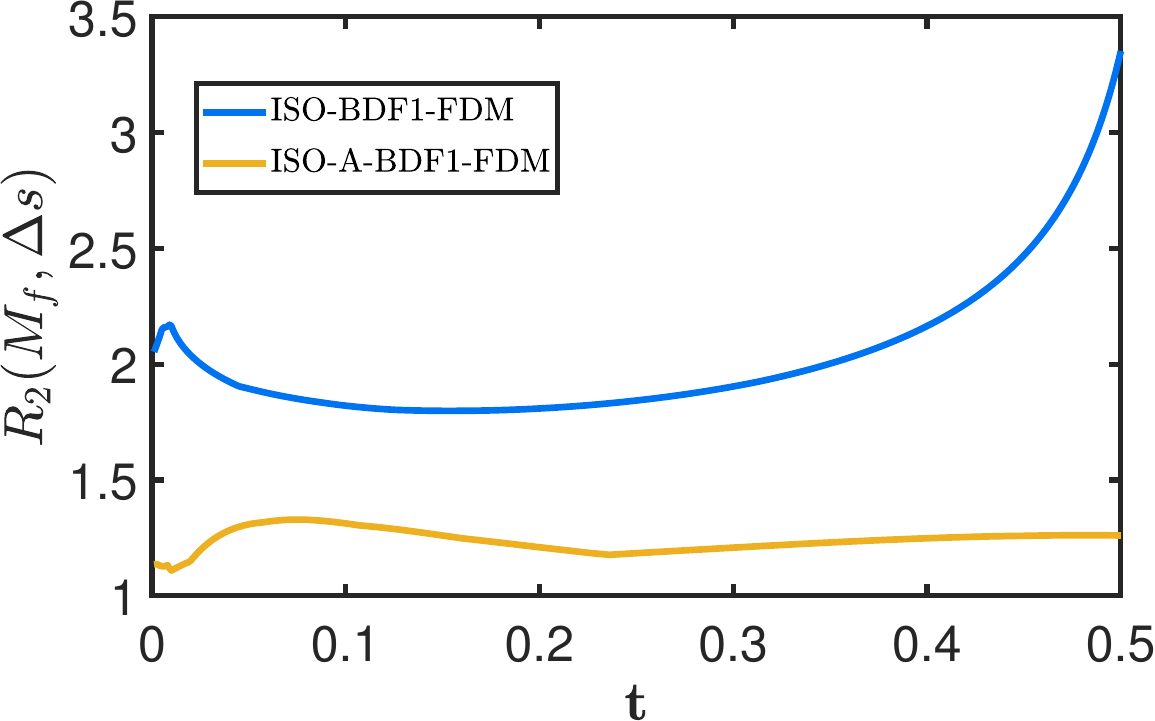}
		\caption{
			Time evolution of the mesh quality indicators
			$R_1(\Delta s)$ (left) and $R_2(M_f,\Delta s)$ (right)
			for the initial curve shown in Fig.~\ref{fig:example2.1_evolution}.
			The results correspond to the ISO-BDF1-FDM and the ISO-A-BDF1-FDM.
		}
		\label{fig:example2.1_mesh_quality}
	\end{figure}

	\begin{figure}[htbp]
		\centering
		\includegraphics[width=0.38\linewidth]{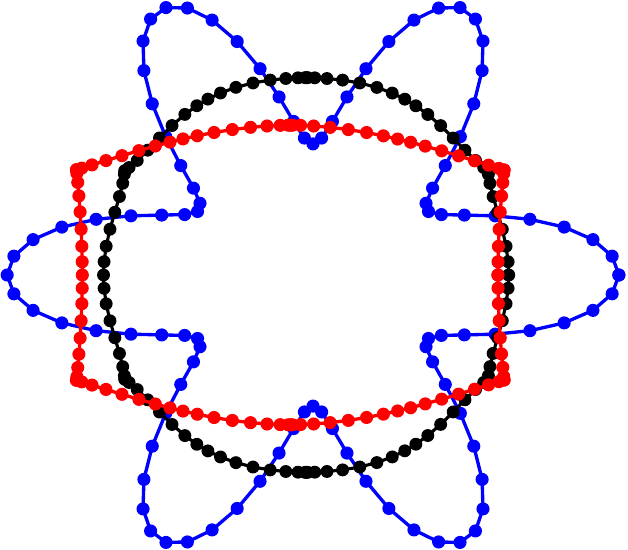}
		\hspace{0.08\linewidth} 
		\includegraphics[width=0.38\linewidth]{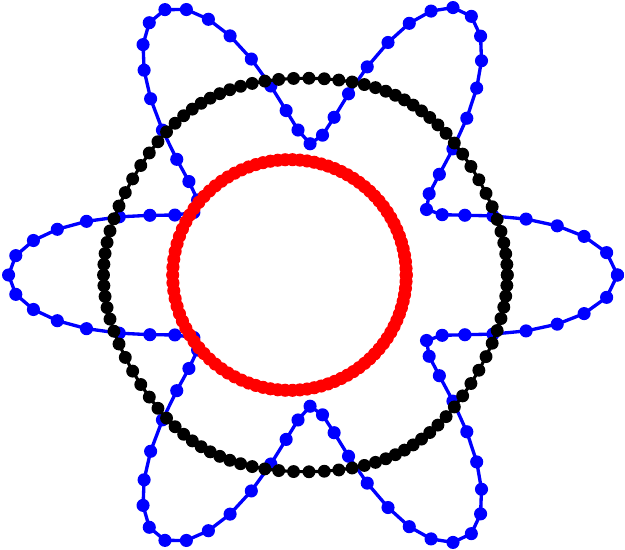}
		\caption{Curve evolution for the ISO-BDF1-FDM and ISO-A-BDF1-FDM, with the initial curve 
			$x = 4 + \left[1 + 0.4\cos\left(12\pi\rho\right)\right]\cos\left(2\pi\rho\right), 
			y = \left[1 + 0.4\cos\left(12\pi\rho\right)\right]\sin\left(2\pi\rho\right)$.
			Left: ISO-BDF1-FDM, right: ISO-A-BDF1-FDM.
			Blue curves denote the initial curve,
			while black and red curves correspond to $t=0.12$, and $t=0.4$, respectively.}
		\label{fig:example2.2_evolution}
	\end{figure}
	
	\begin{figure}[htbp]
		\centering
		\includegraphics[width=0.44\linewidth]{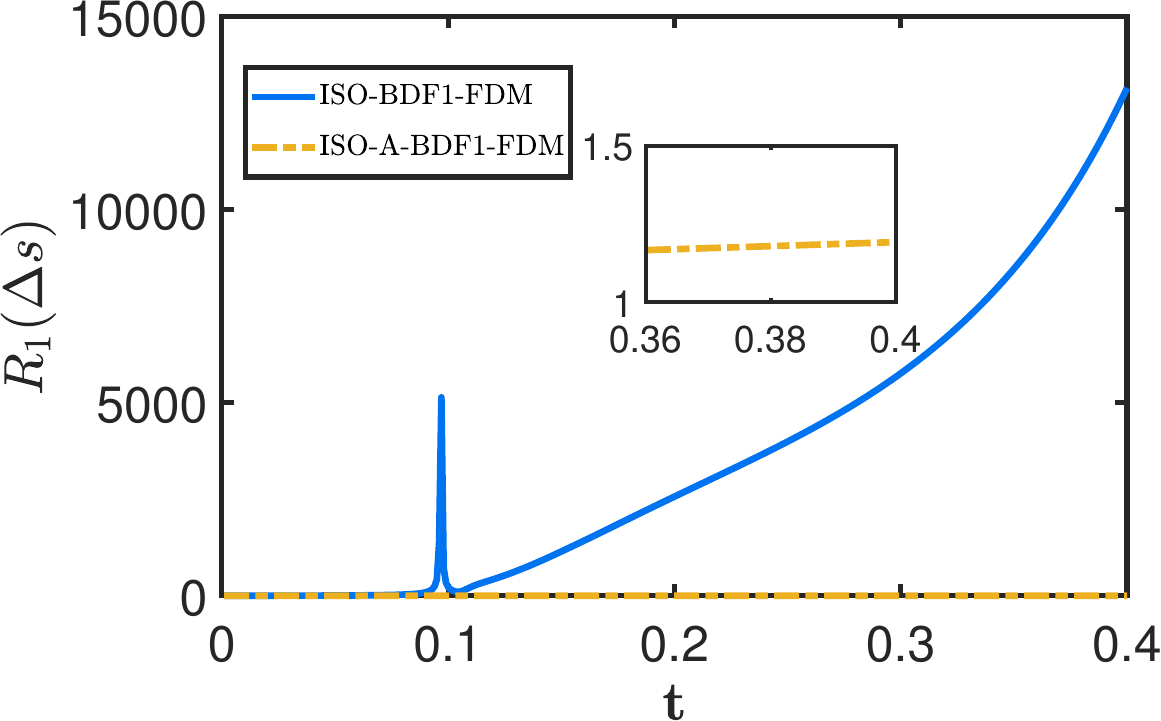}
		\hspace{0.04\linewidth} 
		\includegraphics[width=0.44\linewidth]{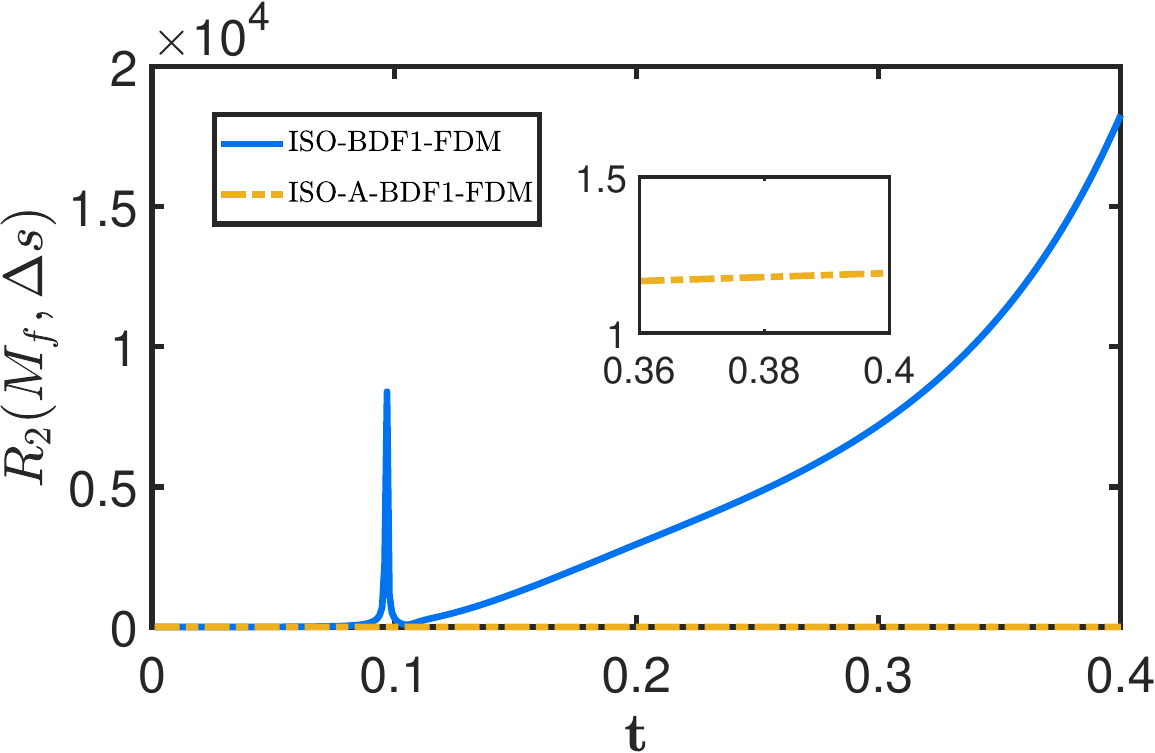}
		\caption{
			Time evolution of the mesh quality indicators
			$R_1(\Delta s)$ (left) and $R_2(M_f,\Delta s)$ (right)
			for the initial curve shown in Fig.~\ref{fig:example2.2_evolution}.
			The results correspond to the ISO-BDF1-FDM and the ISO-A-BDF1-FDM.
		}
		\label{fig:example2.2_mesh_quality}
	\end{figure}
	
	\begin{figure}[htbp]
		\centering
		\includegraphics[width=0.45\linewidth]{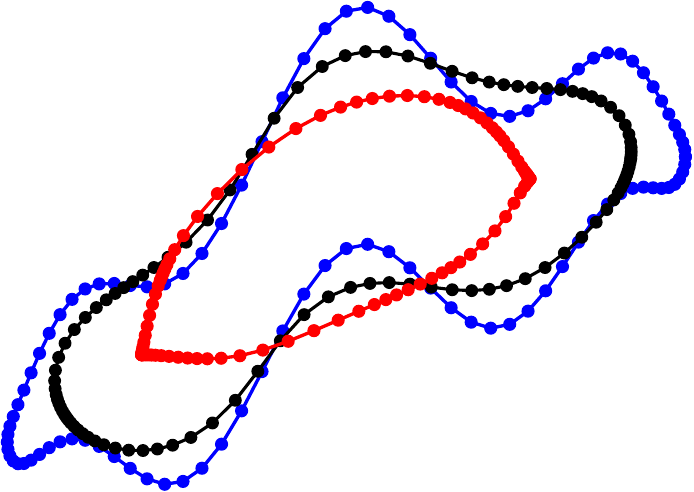}
		\hspace{0.08\linewidth} 
		\includegraphics[width=0.45\linewidth]{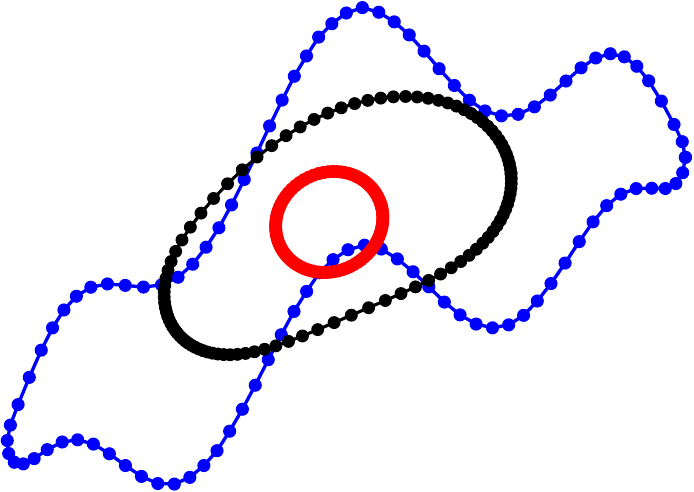}
		\caption{Curve evolution for the ISO-BDF1-FDM and ISO-A-BDF1-FDM, with the initial curve
			$x = 6 + 2\cos\left(2\pi\rho\right)$, $
			y = 0.5\sin\left(2\pi\rho\right)
			+ \sin\!\left(\cos\left(2\pi\rho\right)\right)
			+ \sin\left(2\pi\rho\right)\!\left[0.2 + \sin\left(2\pi\rho\right)\sin^{2}\left(6\pi\rho\right)\right].$ Left: ISO-BDF1-FDM; right: ISO-A-BDF1-FDM.
			Blue curves denote the initial curve,
			while black and red curves correspond to \(t = 0.1\), and \(t = 0.38\) on the left panel,
			and to \(t = 0.38\), and \(t = 0.66\) on the right panel, respectively.}
		\label{fig:example2.3_evolution}
	\end{figure}
	
	\begin{figure}[htbp]
		\centering
		\includegraphics[width=0.4\linewidth]{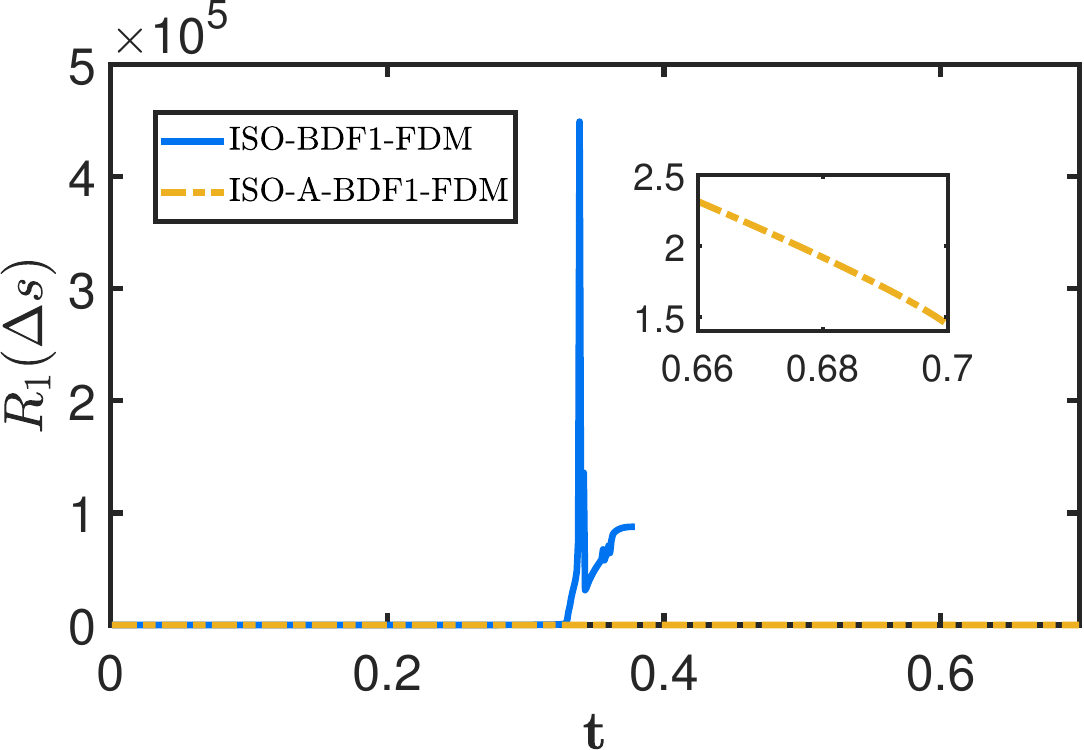}
		\hspace{0.08\linewidth} 
		\includegraphics[width=0.4\linewidth]{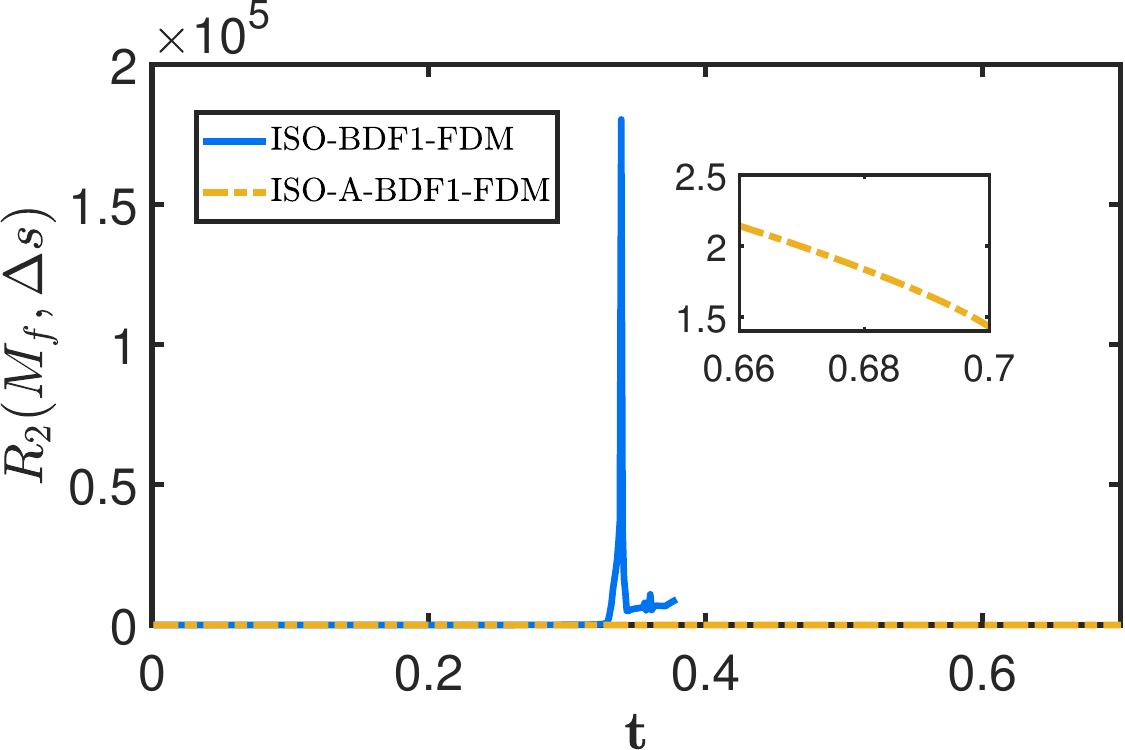}
		\caption{Time evolution of the mesh quality indicators
			$R_1(\Delta s)$ (left) and $R_2(M_f,\Delta s)$ (right)
			for the initial curve shown in Fig.~\ref{fig:example2.3_evolution}.
			The results correspond to the ISO-BDF1-FDM and the ISO-A-BDF1-FDM.}
		\label{fig:example2.3_mesh_quality}
	\end{figure}
\end{example}

\begin{example}
	In this example, we examine the area-decay property of the proposed
	structure-preserving schemes, namely the ISO-A-LM-BDFk-FDMs and the
	ISO-A-LM-CN-FDM, for the isotropic axisymmetric MCF, which
	inherit the intrinsic structure of the continuous model.
	The initial generating curve is chosen as
	\[
	x(\rho,0) = 6 + 3\cos(2\pi\rho), \qquad
	y(\rho,0) = 2\sin(2\pi\rho), \qquad \rho \in [0,1],
	\]
	which corresponds to an axisymmetric surface with a nonuniform curvature distribution.
	We define \( A^h(t)\big|_{t=t_m} := A^m \) as the area of the axisymmetric surface generated by the discrete curve, and we study the temporal evolution of the normalized area \( A^h(t)/A^h(0) \).
	Fig.~\ref{fig:ISO_energy_relative} displays the evolution of the normalized surface area
	$A^h(t)/A^h(0)$ computed by the ISO-A-LM-BDFk-FDMs and ISO-A-LM-CN-FDM.
	It is observed that the surface area decreases monotonically for all schemes over
	the entire time interval, which is consistent with our theoretical results.

	\begin{figure}[htbp]
		\centering
		\includegraphics[width=0.6\linewidth]{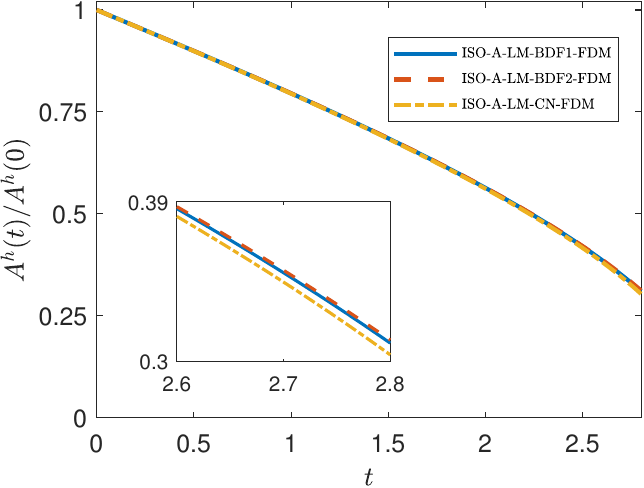}
		\caption{
			Time evolution of the normalized surface area $A^h(t)/A^h(0)$
			for the isotropic axisymmetric MCF computed by the
			ISO-A-LM-BDFk-FDMs and ISO-A-LM-CN-FDM.
			The initial generating curve is
			$x(\rho,0)=6+3\cos(2\pi\rho)$ and $y(\rho,0)=2\sin(2\pi\rho)$.
			The final time is $T=2.8$ with a uniform time step $\Delta t=0.01$.
			All schemes exhibit a clear area-decay behavior.
			The inset shows a local magnification over $t \in [2.6,2.8]$.
		}
		\label{fig:ISO_energy_relative}
	\end{figure}
\end{example}

\subsection{Numerical experiments for the anisotropic axisymmetric MCF}
\begin{example}[Convergence tests]
	In this example, we investigate the temporal-spatial convergence behavior
	of the proposed numerical schemes for the anisotropic axisymmetric MCF.
	The same initial generating curve~\eqref{eq:initial_curve} and final time
	$T=0.4$ as in the isotropic case are employed.
	Fig.~\ref{fig:error_time_ANISO} shows the log-log plots of the numerical
	errors versus the time step size for the anisotropic case.
	The results demonstrate that the ANISO-A-BDF1-FDM achieves first-order
	accuracy in time, while both the ANISO-A-BDF2-FDM and ANISO-A-CN-FDM achieve
	second-order temporal accuracy.
	Moreover, all schemes exhibit second-order spatial accuracy.
	These observations are fully consistent with the expected convergence rates.
	\begin{figure}[htbp]
		\label{fig:error_time_ANISO}
		\centering
		\includegraphics[width=0.97\linewidth]{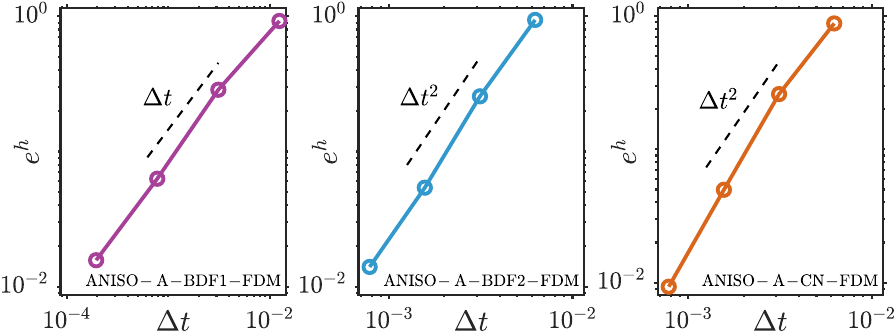}
		\caption{
			Plots of the numerical error \(e^h\) versus the time step \(\Delta t\).
			From left to right, the results obtained with the ANISO-A-BDF1-FDM,
			ANISO-A-BDF2-FDM, and ANISO-A-CN-FDM schemes are shown, respectively.
		}
	\end{figure}
\end{example}

\begin{example}
	In this example, we investigate the mesh quality of the moving-mesh strategy for the anisotropic
	axisymmetric MCF with a four-fold anisotropy
	\begin{equation*}
		\gamma(\theta)=1+0.04\cos\left(4\theta\right).
	\end{equation*}
	To simplify the comparison and focus on mesh quality behavior,
	we consider only the non-adaptive scheme ANISO-BDF1-FDM and its adaptive counterpart
	ANISO-A-BDF1-FDM.
	For each test, we report the mesh evolution together with the mesh quality indicators
	\(R_1(\Delta s)\) and \(R_2(M_f,\Delta s)\). For this example, we mainly do the following tests: 
	\begin{itemize}
		\item Fig.~\ref{fig:ANISO_example1_evolution} shows that the ANISO-BDF1-FDM rapidly develops
		a strongly nonuniform point distribution during the evolution, with increasingly uneven
		spatial resolution along the curve.
		In contrast, the ANISO-A-BDF1-FDM dynamically redistributes mesh points and maintains a visibly
		more balanced resolution throughout the entire evolution.
		This qualitative observation is quantitatively confirmed in
		Fig.~\ref{fig:ANISO_example1_mesh}, where the mesh quality indicators for the ANISO-BDF1-FDM
		grow steadily and increase sharply near the final time,
		while those for the adaptive scheme remain bounded and nearly flat. We can clearly see that the evolution curve based on ANISO-BDF1-FDM is incorrect.
		\item For the initial curve shown in Fig.~\ref{fig:ANISO_example2_evolution}, the non-adaptive
		ANISO-BDF1-FDM exhibits a clear \emph{loss of resolution} near the upper part of the curve:
		mesh points become noticeably sparse in this region, particularly in the upper-left part,
		whereas other regions of the curve remain over-resolved.
		By contrast, the ANISO-A-BDF1-FDM dynamically adjusts the point distribution and preserves
		a much more reasonable resolution around the localized geometric feature.
		The quantitative evidence is provided in Fig.~\ref{fig:ANISO_example2_mesh}, where
		both \(R_1(\Delta s)\) and \(R_2(M_f,\Delta s)\) for the ANISO-BDF1-FDM grow dramatically
		with a clear blow-up trend near the end of the simulation,
		while the corresponding curves for the adaptive scheme stay small and stable
		throughout the entire time interval. We can observe that, unlike the ANISO-A-BDF1-FDM, the evolution curve based on ANISO-BDF1-FDM is incorrect.
		\item For the oscillatory initial curve shown in Fig.~\ref{fig:ANISO_example3_evolution},
		the ANISO-BDF1-FDM quickly suffers from severe mesh distortion,
		leading to highly uneven point distributions and under-resolved geometric features.
		Accordingly, Fig.~\ref{fig:ANISO_example3_mesh} shows that both mesh quality indicators
		\(R_1(\Delta s)\) and \(R_2(M_f,\Delta s)\) increase rapidly and reach very large values.
		In contrast, the ANISO-A-BDF1-FDM keeps both indicators at much smaller levels,
		with only mild temporal variations, demonstrating robust mesh control even for
		highly oscillatory geometries. In this test, the evolution curve governed by the ANISO-BDF1-FDM is completely incorrect.
		
		\item The adaptive methods proposed in this work are initialized using the de Boor algorithm to generate a well-distributed initial mesh according to the prescribed monitor function. 
		This naturally raises the question of whether the mesh differences observed in the first three numerical experiments are merely a consequence of this initial mesh preprocessing. 
		To eliminate this concern, we conduct two additional sets of experiments, in which the de Boor algorithm is applied only to the initial mesh of the non-adaptive ANISO-BDF1-FDM, while the adaptive ANISO-A-BDF1-FDM starts from the original parameterization.
		As observed in  Fig.~\ref{fig:ANISO_example4_evolution},
		the de Boor initialization leads to a more uniform point distribution
		for the ANISO-BDF1-FDM at early times.
		However, as the evolution proceeds, the mesh quality of the non-adaptive scheme
		still deteriorates progressively. 
		The corresponding mesh quality indicators
		\(R_1(\Delta s)\) and \(R_2(M_f,\Delta s)\),
		shown in Fig.~\ref{fig:ANISO_example4_mesh},
		indicate that although the initial values are improved by the mesh preprocessing,
		both indicators exhibit sustained growth over time.
		In contrast, despite the absence of initial mesh preprocessing,
		the ANISO-A-BDF1-FDM gradually improves its mesh quality through adaptive
		tangential redistribution and maintains significantly smaller and more stable
		indicator values at later times.
		These results indicate that while appropriate initialization improves
		early-stage mesh quality, adaptive tangential redistribution is essential
		for maintaining robust mesh quality over long-time evolutions. 
		The evolution curve based on ANISO-BDF1-FDM is clearly incorrect, whereas the evolution curve based on ANISO-A-BDF1-FDM is evidently correct. This demonstrates that the mesh quality directly determines the accuracy of the evolution.
	\end{itemize}

	\begin{figure} [htbp]
		\centering
		\includegraphics[width=0.33\linewidth]{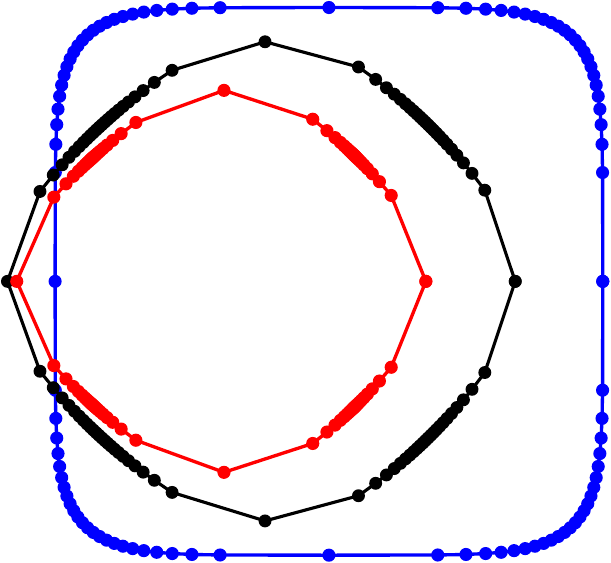}
		\hspace{0.2\linewidth} 
		\includegraphics[width=0.3\linewidth]{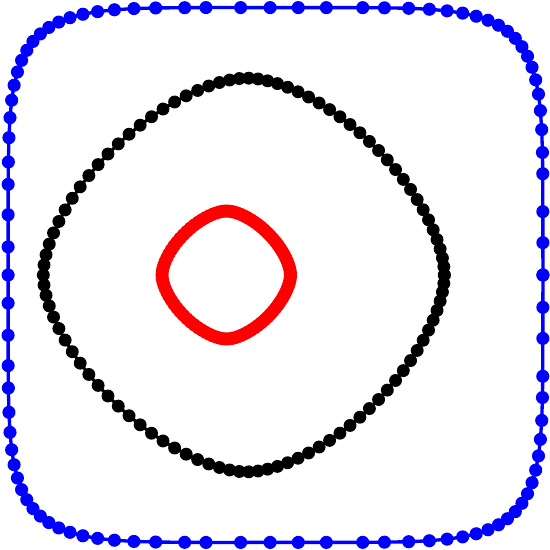}
		\caption{
			Curve evolution for the ANISO-BDF1-FDM and ANISO-A-BDF1-FDM, with  the initial curve $x = 8 + 2.3\,\mathrm{sign}\!\left(\cos\left(2\pi\rho\right)\right)
			\left|\cos\left(2\pi\rho\right)\right|^{1/3},
			$ $y = 2 + 2.3\,\mathrm{sign}\!\left(\sin\left(2\pi\rho\right)\right)
			\left|\sin\left(2\pi\rho\right)\right|^{1/3}.$
			The final time is \(T = 3.2\) with time step \(\Delta t = 0.001\).
			Left: ANISO-BDF1-FDM; right: ANISO-A-BDF1-FDM.
			Blue curves denote the initial curve,
			while black and red curves correspond to \(t = 2\), and \(t = 3.2\), respectively.}
		\label{fig:ANISO_example1_evolution}
	\end{figure}
	\begin{figure}[htbp]
		\centering
		\includegraphics[width=0.4\linewidth]{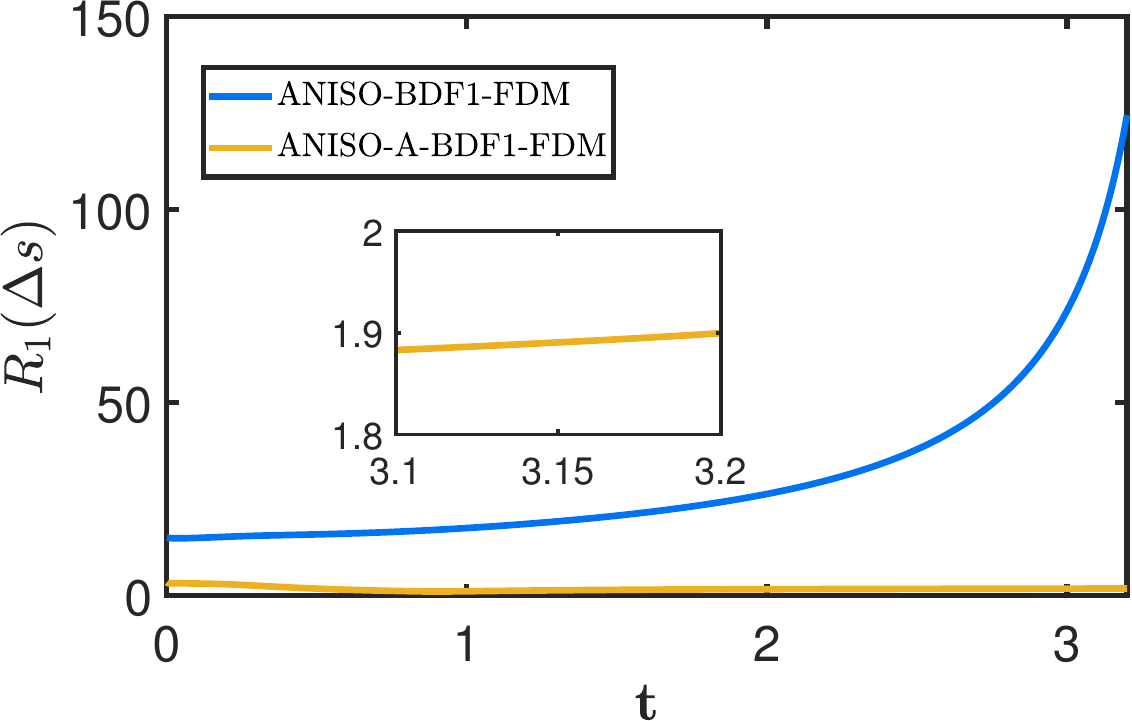}
		\hspace{0.08\linewidth} 
		\includegraphics[width=0.4\linewidth]{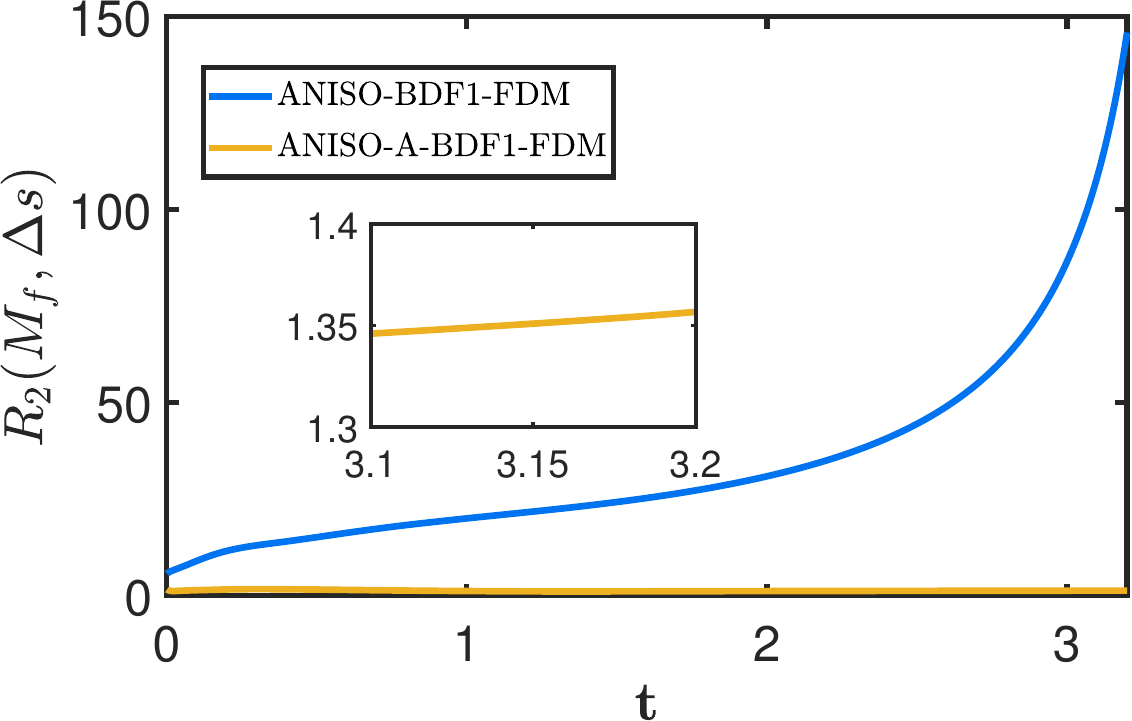}
		\caption{Time evolution of the mesh quality indicators
			$R_1(\Delta s)$ (left) and $R_2(M_f,\Delta s)$ (right)
			for the initial curve shown in Fig.~\ref{fig:ANISO_example1_evolution}.
			The results correspond to the ANISO-BDF1-FDM and the ANISO-A-BDF1-FDM.}
		\label{fig:ANISO_example1_mesh}
	\end{figure}
	\begin{figure}[htbp]
		\centering
		\includegraphics[width=0.3\linewidth]{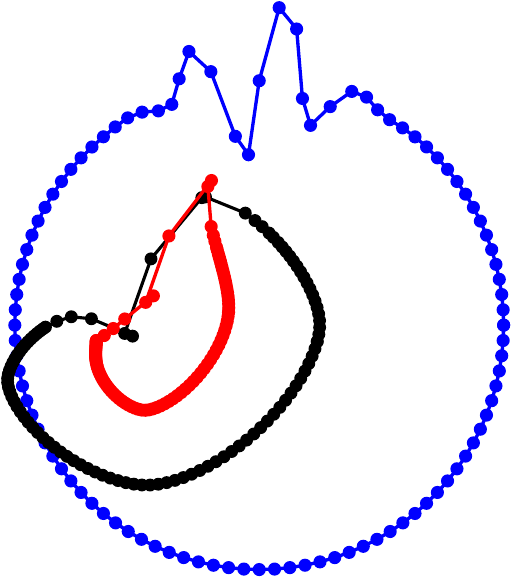}
		\hspace{0.2\linewidth} 
		\includegraphics[width=0.3\linewidth]{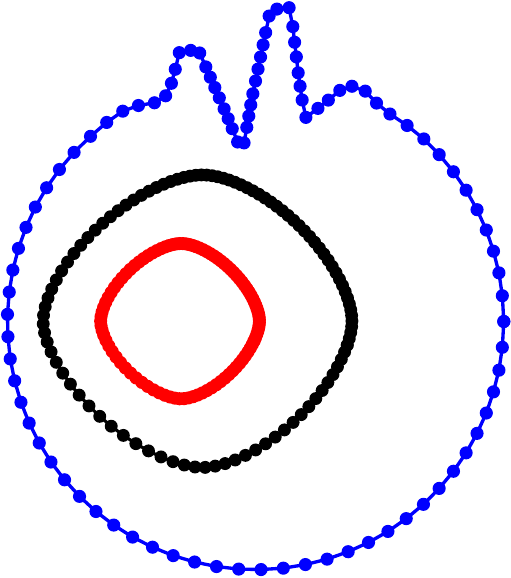}
		\caption{Curve evolution for the ANISO-BDF1-FDM and ANISO-A-BDF1-FDM, with  the initial curve 
			$x = 4 + 2\left[1 + 0.35\,\exp\!\left(-\left(\frac{\rho-0.25}{0.045}\right)^2\right)
			\sin\!\left(36\pi\rho\right)\right]\cos\!\left(2\pi\rho\right),
			y = 2 + 2\left[1 + 0.35\,\exp\!\left(-\left(\frac{\rho-0.25}{0.045}\right)^2\right)
			\sin\!\left(36\pi\rho\right)\right]\sin\!\left(2\pi\rho\right).$
			The final time is \(T = 2\) with time step \(\Delta t = 0.001\).
			Left: ANISO-BDF1-FDM; right: ANISO-A-BDF1-FDM.
			Blue curves denote the initial curve,
			while black and red curves correspond to\(t = 1.5\), and \(t = 2\), respectively.}
		\label{fig:ANISO_example2_evolution}
	\end{figure}
	\begin{figure}[htbp]
		\centering
		\includegraphics[width=0.4\linewidth]{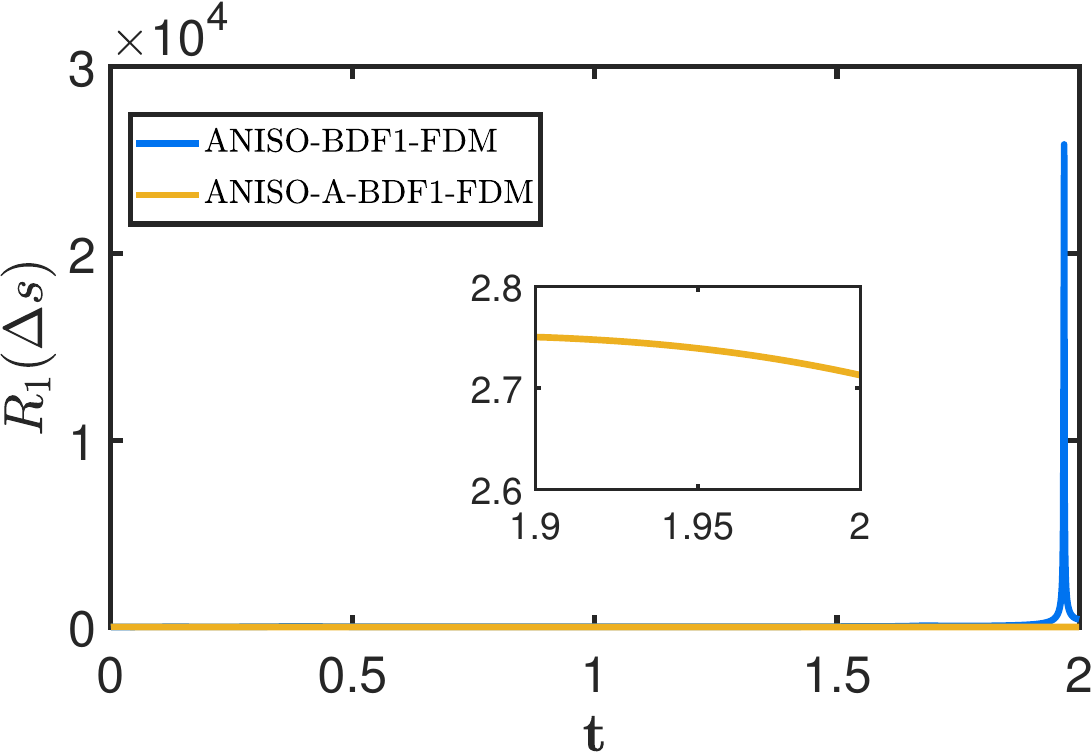}
		\hspace{0.08\linewidth} 
		\includegraphics[width=0.4\linewidth]{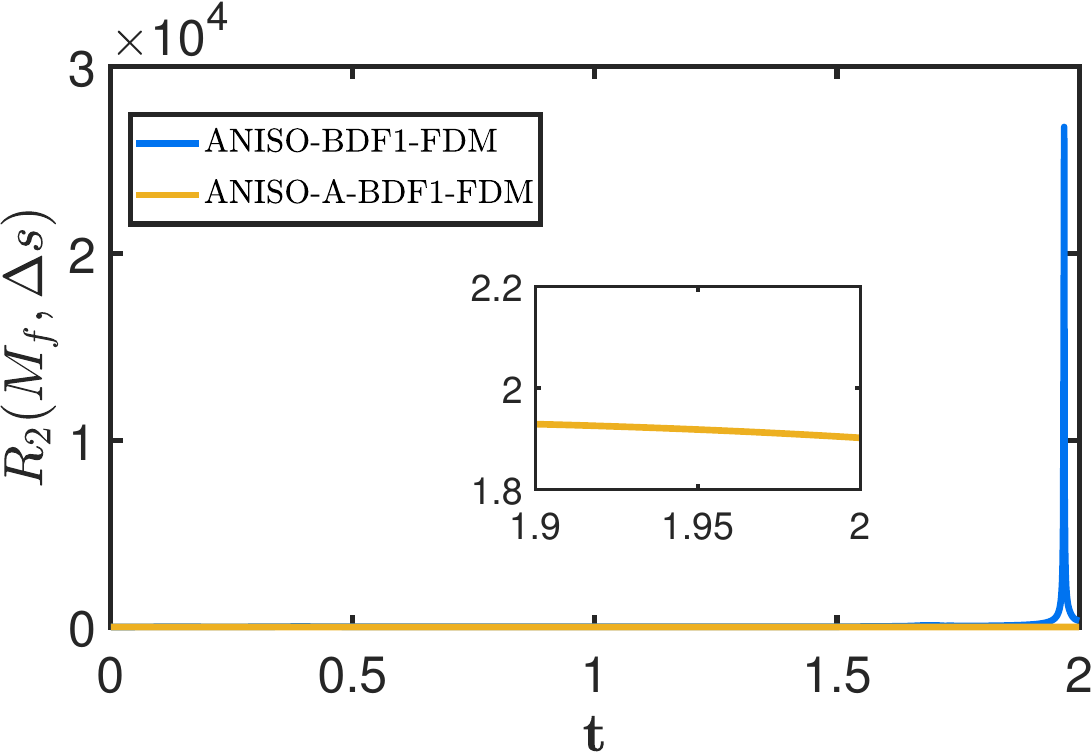}
		\caption{Time evolution of the mesh quality indicators
			$R_1(\Delta s)$ (left) and $R_2(M_f,\Delta s)$ (right)
			for the initial curve shown in Fig.~\ref{fig:ANISO_example2_evolution}.
			The results correspond to the ANISO-BDF1-FDM and the ANISO-A-BDF1-FDM.}
		\label{fig:ANISO_example2_mesh}
	\end{figure}
	
	\begin{figure}[htbp]
		\centering
		\includegraphics[width=0.3\linewidth]{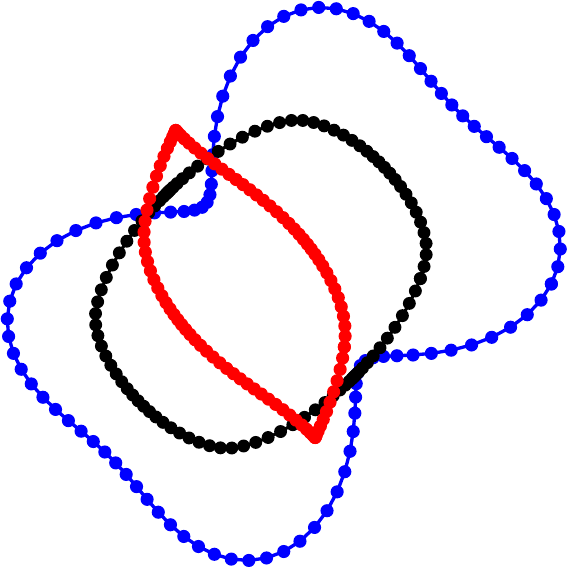}
		\hspace{0.2\linewidth} 
		\includegraphics[width=0.3\linewidth]{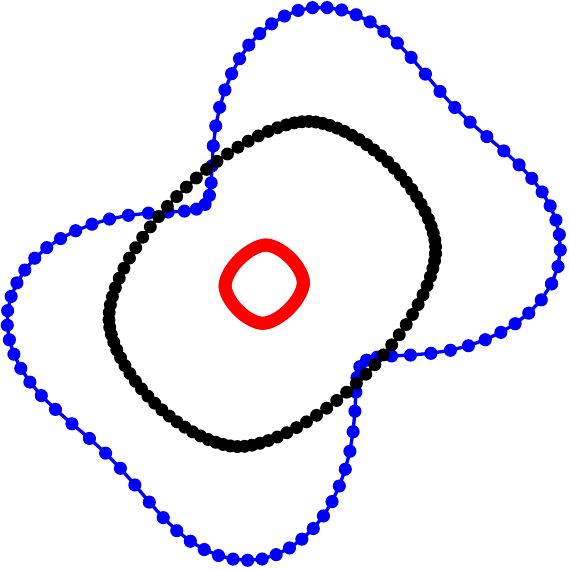}
		\caption{Curve evolution for the ANISO-BDF1-FDM and ANISO-A-BDF1-FDM, with  the initial curve 
			$x = 6 + \left[1 + 0.3\sin\!\left(4\pi\rho\right) + 0.2\cos\!\left(8\pi\rho\right)\right]
			\cos\!\left(2\pi\rho\right),$ $y = \left[1 + 0.3\sin\!\left(4\pi\rho\right) + 0.2\cos\!\left(8\pi\rho\right)\right]
			\sin\!\left(2\pi\rho\right).$
			The final time is \(T = 0.52\) with time step \(\Delta t = 0.001\).
			Left: ANISO-BDF1-FDM; right: ANISO-A-BDF1-FDM.
			Blue curves denote the initial curve,
			while black and red curves correspond to \(t = 0.3\), and \(t = 0.52\), respectively.}
		\label{fig:ANISO_example3_evolution}
	\end{figure}
	\begin{figure} [htbp]
		\centering
		\includegraphics[width=0.4\linewidth]{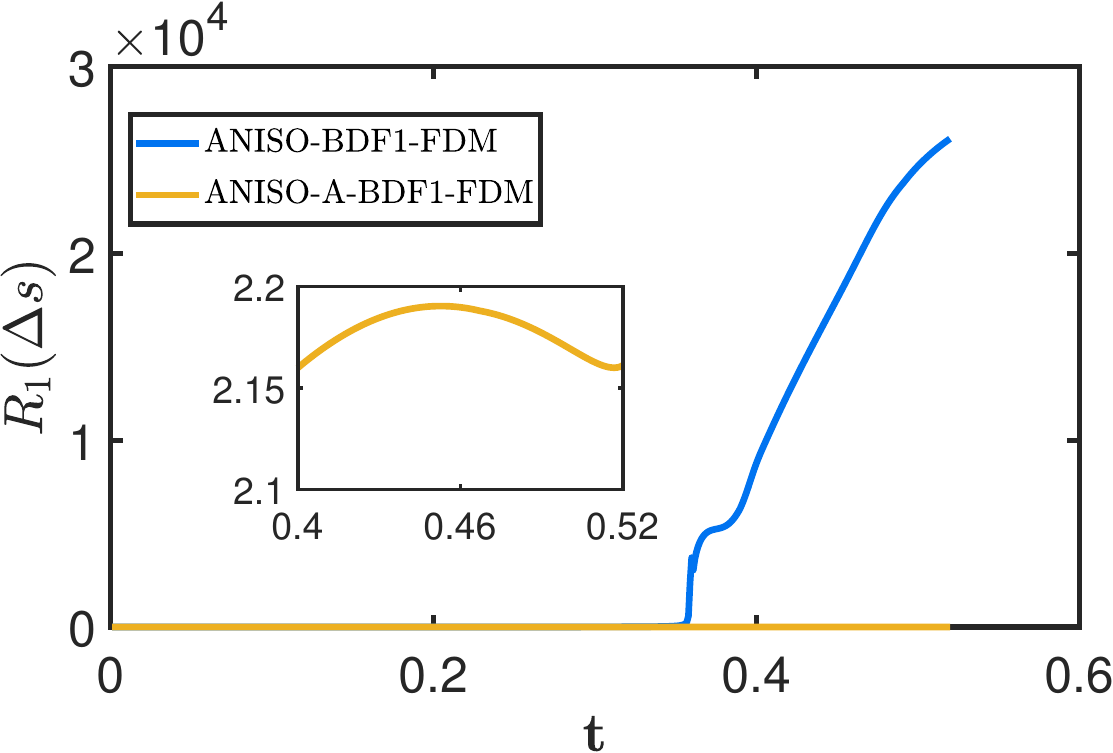}
		\hspace{0.08\linewidth} 
		\includegraphics[width=0.4\linewidth]{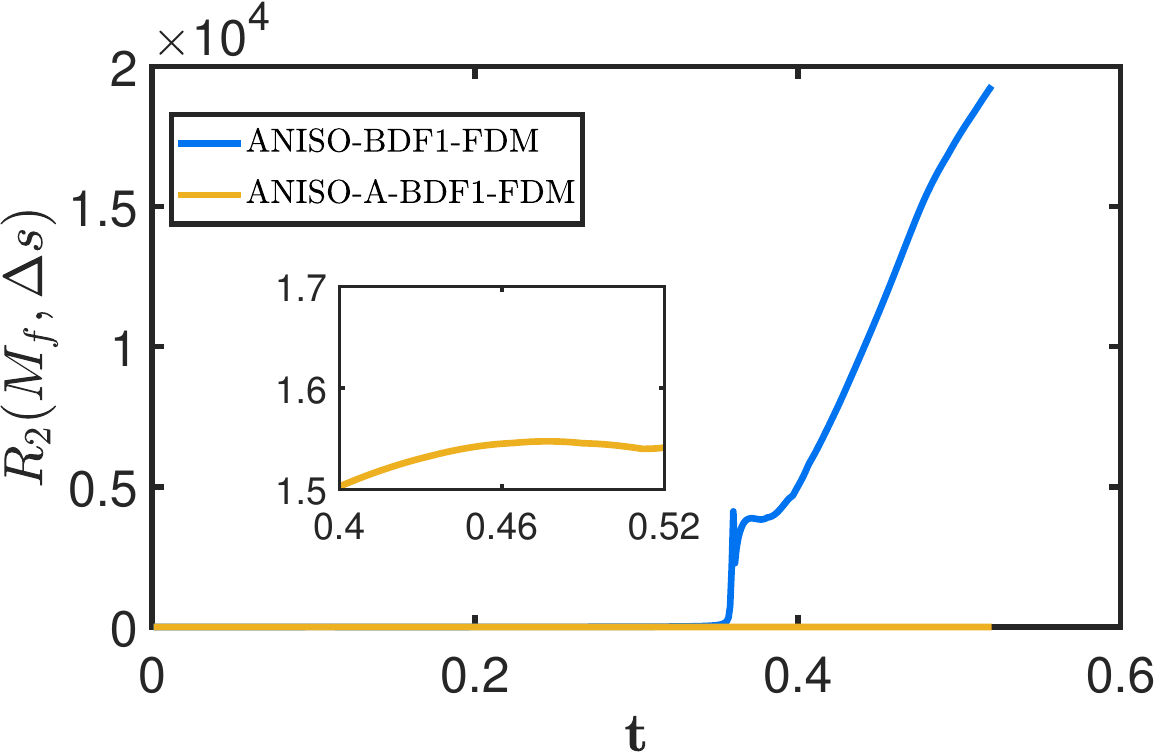}
		\caption{Time evolution of the mesh quality indicators
			$R_1(\Delta s)$ (left) and $R_2(M_f,\Delta s)$ (right)
			for the initial curve shown in Fig.~\ref{fig:ANISO_example3_evolution}.
			The results correspond to the ANISO-BDF1-FDM and the ANISO-A-BDF1-FDM.}
		\label{fig:ANISO_example3_mesh}
	\end{figure}
	\begin{figure} [htbp]
		\centering
		\includegraphics[width=0.31\linewidth]{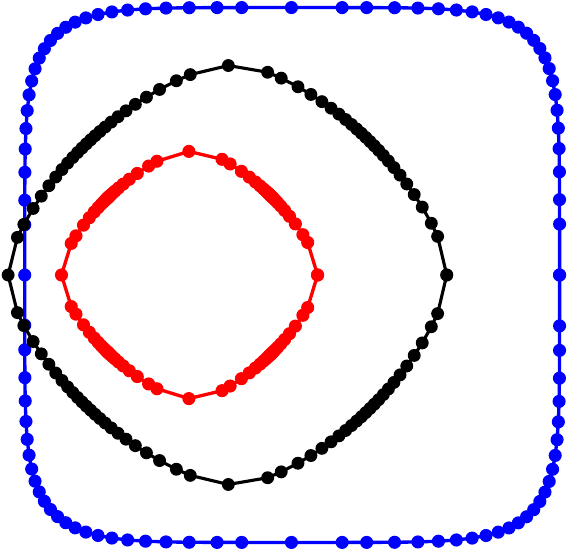}
		\hspace{0.2\linewidth} 
		\includegraphics[width=0.3\linewidth]{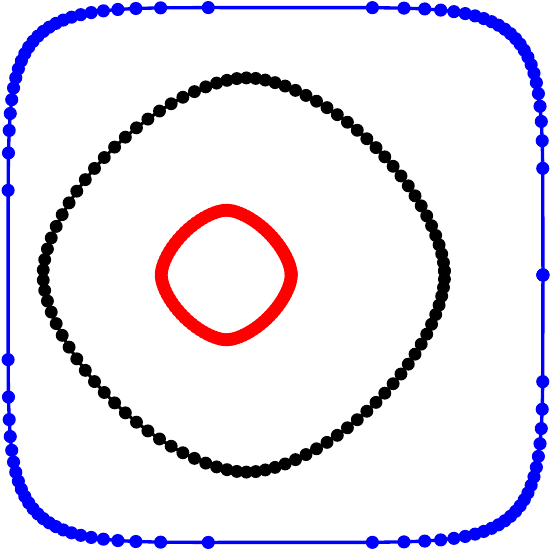}
		\caption{
			Curve evolution for the ANISO-BDF1-FDM and ANISO-A-BDF1-FDM, with  the initial curve 
			$x = 8 + 2.3\,\mathrm{sign}\!\left(\cos\left(2\pi\rho\right)\right)
			\left|\cos\left(2\pi\rho\right)\right|^{1/3},$ $y = 2 + 2.3\,\mathrm{sign}\!\left(\sin\left(2\pi\rho\right)\right)
			\left|\sin\left(2\pi\rho\right)\right|^{1/3}.$
			The final time is \(T = 3.2\) with time step \(\Delta t = 0.001\).
			Left: ANISO-BDF1-FDM; right: ANISO-A-BDF1-FDM.
			Blue curves denote the initial curve,
			while black and red curves correspond to \(t =0\), \(t = 2\), and \(t = 3.2\), respectively.}
		\label{fig:ANISO_example4_evolution}
	\end{figure}
	\begin{figure} [htbp]
		\centering
		\includegraphics[width=0.4\linewidth]{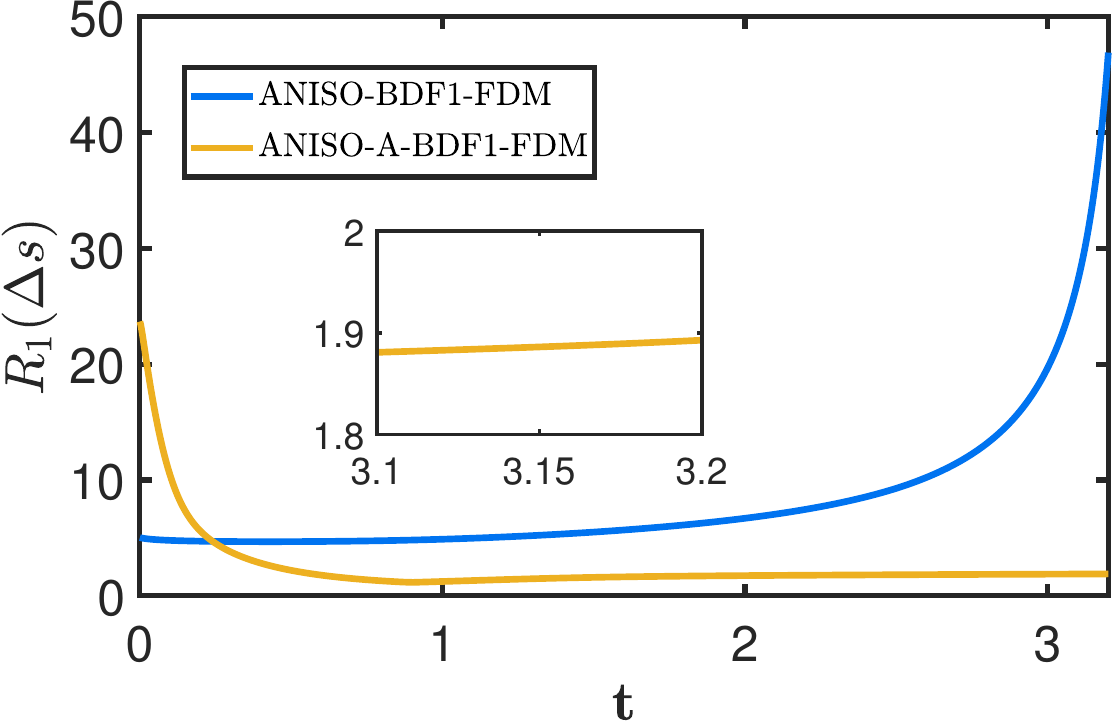}
		\hspace{0.08\linewidth} 
		\includegraphics[width=0.4\linewidth]{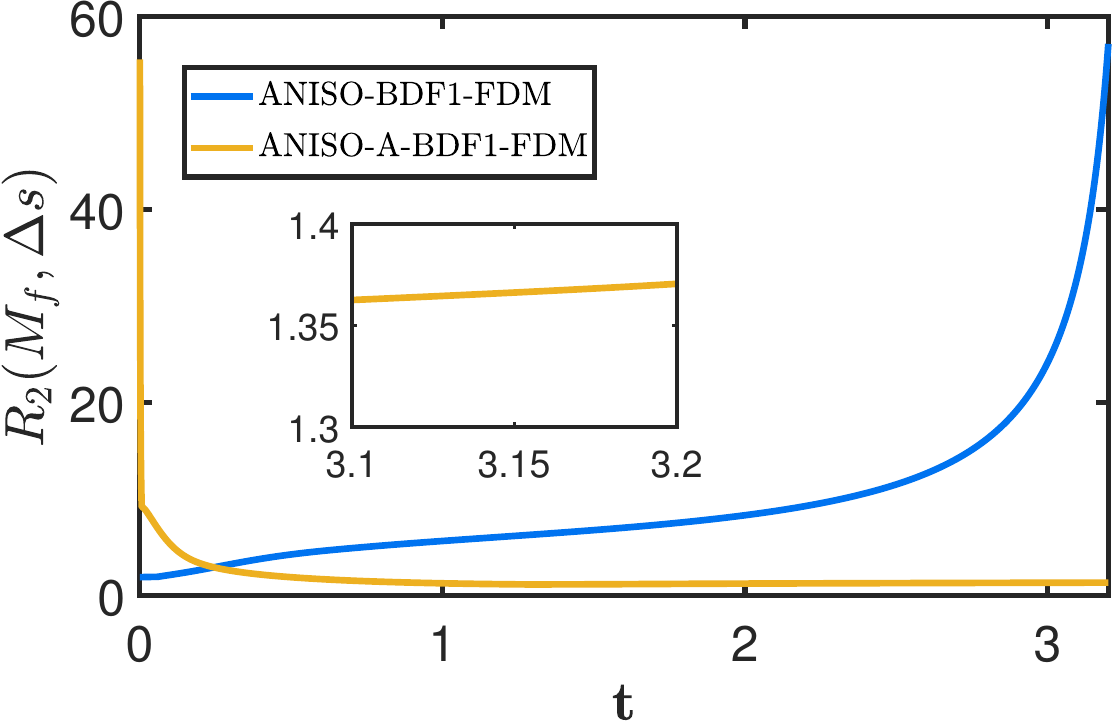}
		\caption{Time evolution of the mesh quality indicators
			$R_1(\Delta s)$ (left) and $R_2(M_f,\Delta s)$ (right)
			for the initial curve shown in Fig.~\ref{fig:ANISO_example4_evolution}.
			The results correspond to the ANISO-BDF1-FDM and the ANISO-A-BDF1-FDM.}
		\label{fig:ANISO_example4_mesh}
	\end{figure}
	
\end{example}

\begin{example}
	In this example, we compare the numerical performance of adaptive and non-adaptive schemes
	under different anisotropy strengths for the anisotropic axisymmetric MCF. 
	The same initial generating curve as in Fig.~\ref{fig:ANISO_example5_evolution} is employed,
	while the anisotropy parameter $\beta$ is varied to examine increasingly strong
	anisotropic effects.
	Fig. \ref{fig:ANISO_example5_evolution} shows the evolution computed by the ANISO-BDF1-FDM and ANISO-A-BDF1-FDM for the anisotropic system with 4-fold anisotropy. We select different anisotropy strengths, with \( \beta = 0.02, 0.04, \) and \( 0.06 \), respectively. As the anisotropy strength increases, the non-adaptive scheme exhibits a noticeable decline in mesh quality, characterized by an increasing nonuniformity in the mesh distribution throughout the evolution. The plots clearly illustrate that this mesh deterioration leads to an inaccurate representation of the evolving surface.
	This behavior is further corroborated by the mesh quality indicators presented in Fig. \ref{fig:ANISO_example5_mesh_quality}, where both \( R_1(\Delta s) \) and \( R_2(M_f, \Delta s) \) increase significantly as \( \beta \) rises. 
	In contrast, the adaptive scheme is able to maintain a well-distributed mesh for all tested
	anisotropy strengths.
	Even in the presence of relatively strong anisotropy, the mesh quality indicators remain
	well controlled and show no evident degradation over time.
	These results indicate that the proposed adaptive strategy is robust with respect to the
	anisotropy strength and can effectively preserve good mesh quality in anisotropic regimes.
	
	\begin{figure}[htbp]
		\centering
		\includegraphics[width=0.85\linewidth]{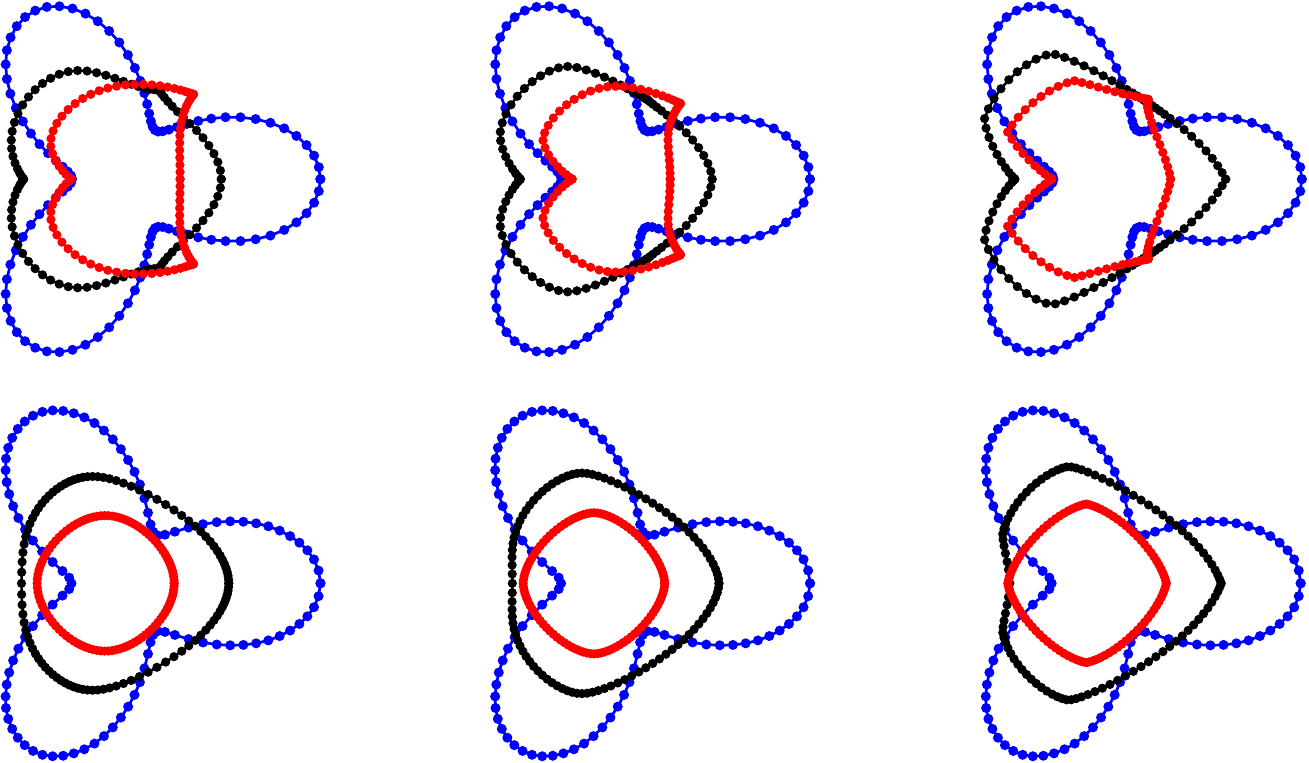}
		\caption{Curve evolution for the ANISO-BDF1-FDM and the ANISO-A-BDF1-FDM with different anisotropic strengths.
			The initial curve is selected as 
			$x = 6 + 2\left[1 + 0.55\cos\left(6\pi\rho\right)\right]\cos\left(2\pi\rho\right),$
			$y = 2\left[1 + 0.55\cos\left(6\pi\rho\right)\right]\sin\left(2\pi\rho\right).$
			The final time is $T = 1.8$ with time step $\Delta t = 0.01$.
			The top row shows the results obtained by ANISO-BDF1-FDM,
			while the bottom row corresponds to ANISO-A-BDF1-FDM,
			for $\beta = 0.02, 0.04,$ and $0.06$ from left to right.
			Blue curves denote the initial curve,
			while black and red curves correspond to $t = 1.0$ and $t = 1.8$, respectively.}
		\label{fig:ANISO_example5_evolution}
	\end{figure}

	\begin{figure}[htbp]
		\centering
		\includegraphics[width=0.94\linewidth]{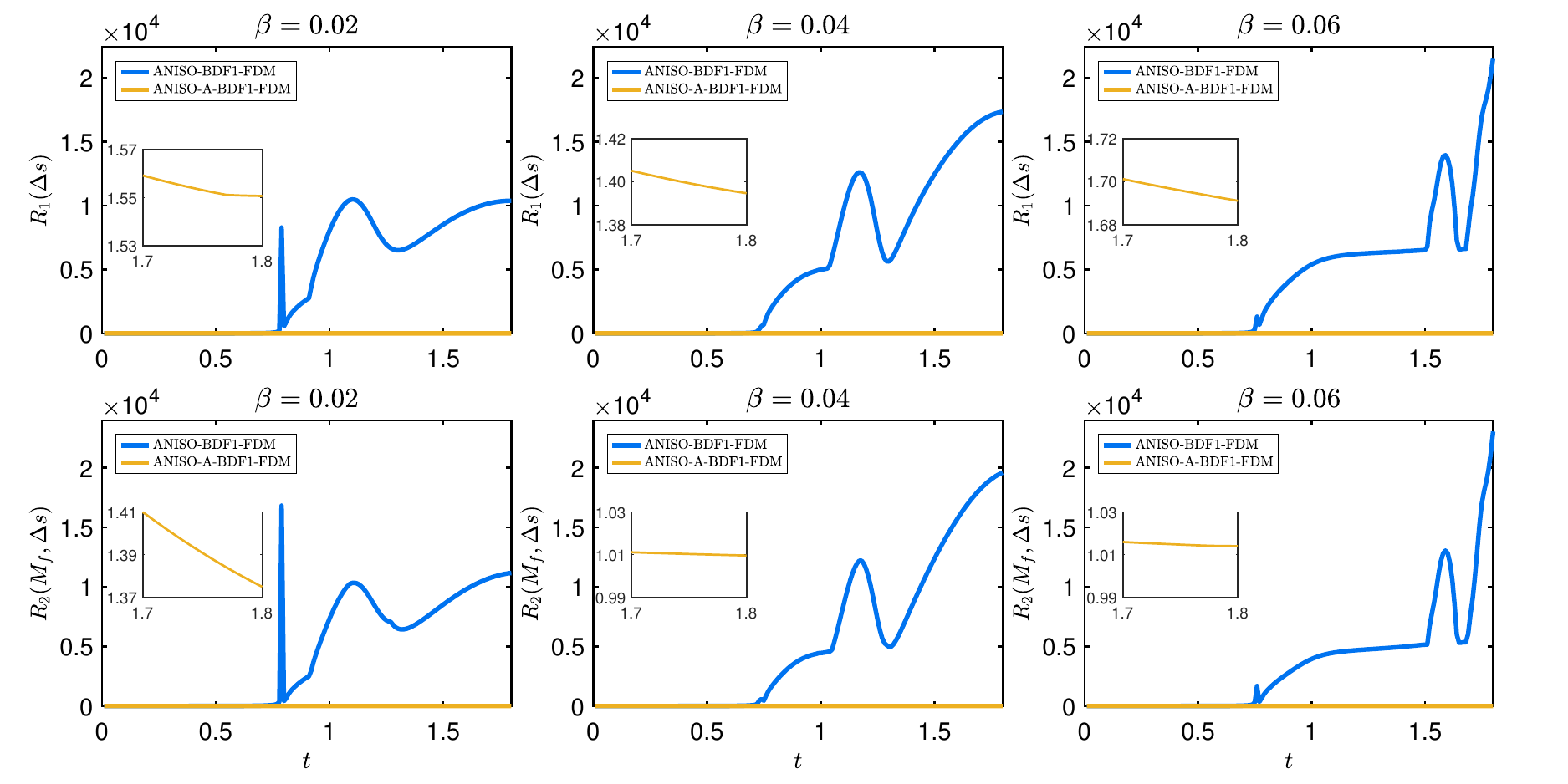}
		\caption{
			Time evolution of the mesh quality indicators \( R_1(\Delta s) \) (top row) and \( R_2(M_f,\Delta s) \) (bottom row) for the evolution shown in Fig.~\ref{fig:ANISO_example5_evolution}.
		}
		\label{fig:ANISO_example5_mesh_quality}
	\end{figure}
\end{example}
\begin{example}
	In this example, we check the energy stability of the proposed
	structure-preserving schemes, including the ANISO-A-LM-BDFk-FDMs and the
	ANISO-A-LM-CN-FDM, for the anisotropic axisymmetric MCF,
	which inherit the intrinsic structure of the continuous anisotropic model. 
	The initial generating curve is chosen as
	\[
	x(\rho,0) = 6 + 3\cos(2\pi\rho), \qquad
	y(\rho,0) = 2\sin(2\pi\rho), \qquad \rho \in [0,1].
	\]
	The anisotropic strength parameter is set to \( \beta = 0.04 \).
	We define \( W^h(t)\big|_{t=t_m} := W^m \) as the discrete anisotropic surface
	energy associated with the numerical generating curve, and we study the
	temporal evolution of the normalized energy \( W^h(t)/W^h(0) \).
	Fig.~\ref{fig:ANISO_energy_relative} displays the evolution of the normalized
	anisotropic energy \( W^h(t)/W^h(0) \) computed by the ANISO-A-LM-BDFk-FDMs and
	the ANISO-A-LM-CN-FDM. We observe that the discrete energy decreases
	monotonically for all schemes over the entire time interval, which is
	consistent with our theoretical results.
	
	\begin{figure} [htbp]
		\centering
		\includegraphics[width=0.6\linewidth]{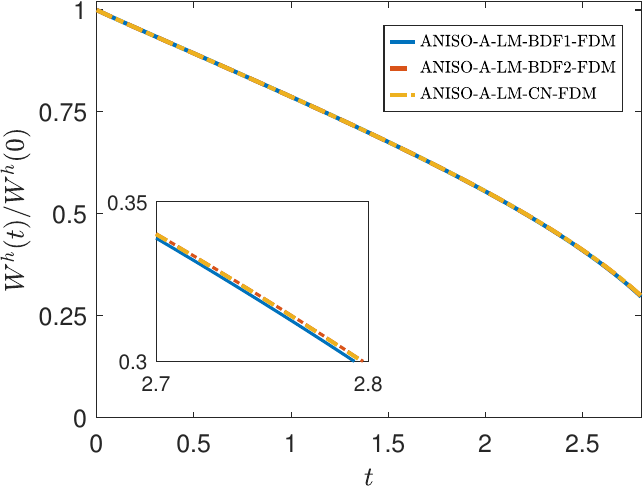}
		\caption{
			Time evolution of the normalized surface area $W^h(t)/W^h(0)$
			for the isotropic axisymmetric MCF computed by the
			ANISO-A-LM-BDFk-FDMs and ANISO-A-LM-CN-FDM.
			The initial generating curve is
			$x(\rho,0)=6+3\cos(2\pi\rho)$ and $y(\rho,0)=2\sin(2\pi\rho)$.
			The final time is $T=2.8$ with a uniform time step $\Delta t=0.01$.
		}
		\label{fig:ANISO_energy_relative}
	\end{figure}
\end{example}

\subsection{Numerical examples for axisymmetric surface evolution}
\begin{example}
	In this example, we illustrate the evolution of an axisymmetric surface under the
	isotropic axisymmetric MCF computed by the ISO-A-BDF1-FDM. 
		Fig.~\ref{fig:ISO_3D_example1}(a) shows the planar evolution of the generating curve
	at selected time instants \(t = 0,\,0.4,\,1.4,\) and \(2.15\).
	It can be observed that the curve evolves smoothly and shrinks inward.
	To quantitatively assess the mesh quality during the evolution,
	Fig.~\ref{fig:ISO_3D_example1}(b) presents the time histories of the mesh quality indicators
	\(R_1(\Delta s)\) and \(R_2(M\,\Delta s)\).
	The results demonstrate that both indicators remain bounded throughout the computation,
	indicating that the adaptive tangential redistribution effectively maintains a
	reasonable mesh distribution. 
	Finally, Fig.~\ref{fig:ISO_3D_example1}(c) displays the three-dimensional evolution of the
	axisymmetric surface obtained by revolving the generating curve about the rotation axis
	at the corresponding time instants.
	The numerical results confirm the robustness and stability of the proposed
	ISO-A-BDF1-FDM for simulating isotropic axisymmetric MCF.
	\begin{figure}[htbp]
		\begin{minipage}[t]{0.48\linewidth}
			\centering
			\includegraphics[width=\linewidth]{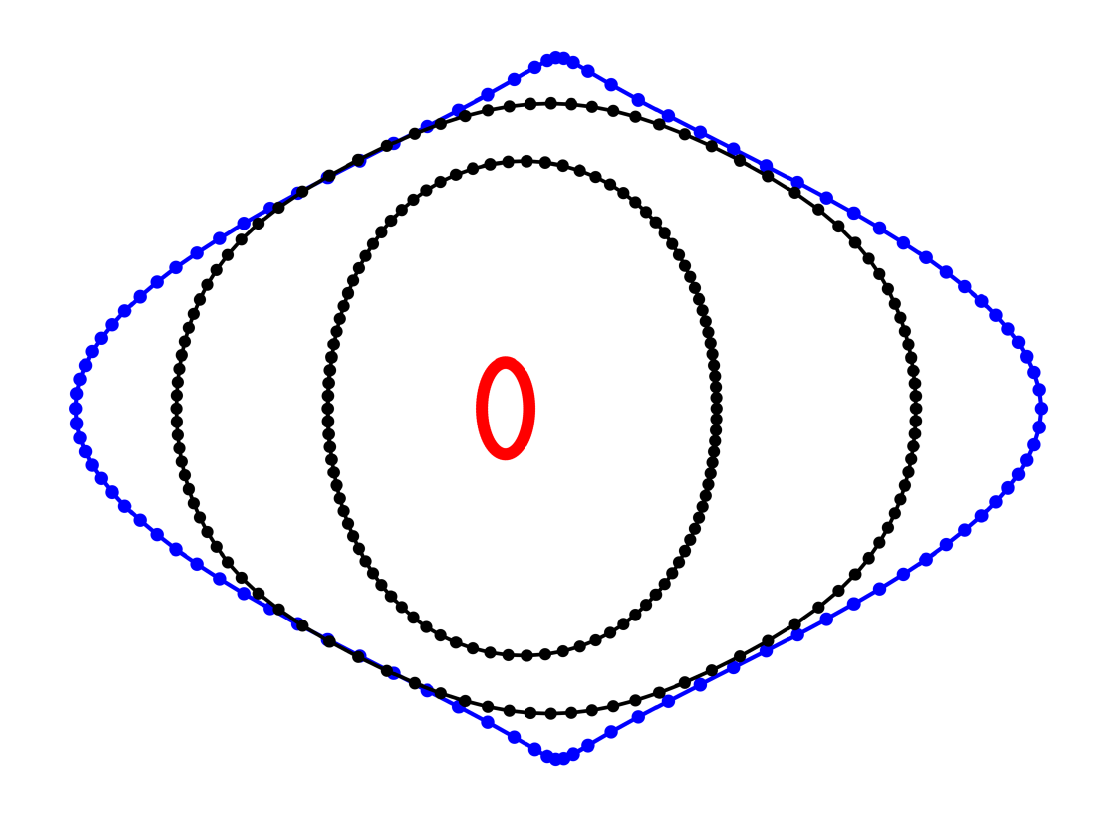}\\
			(a)
		\end{minipage}
		\hfill
		\begin{minipage}[t]{0.5\linewidth}
			\centering
			\includegraphics[width=\linewidth]{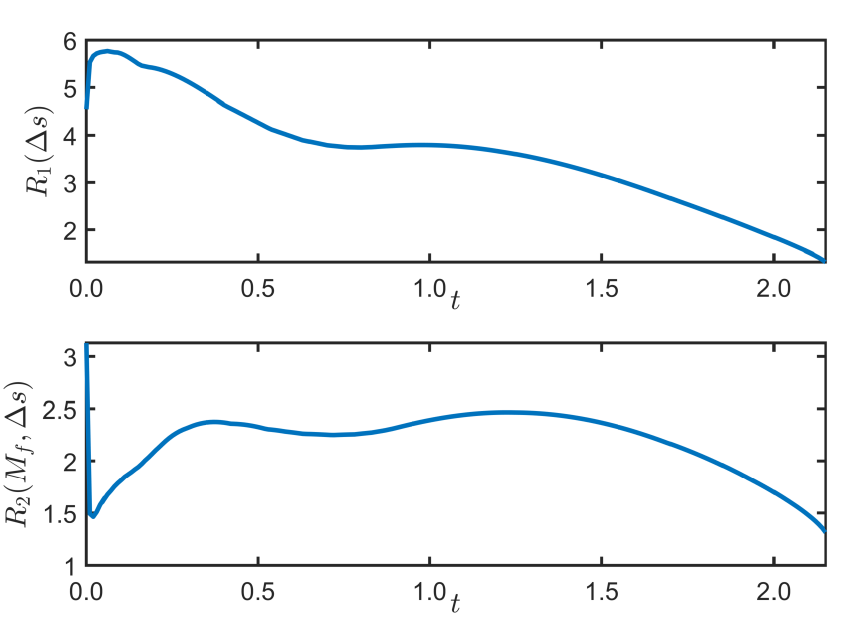}\\
			(b) 
		\end{minipage}
		
		\vspace{0.6em}
		
		\begin{minipage}[t]{1\linewidth}
			\centering
			\makebox[\linewidth][c]{%
				\hspace*{0.07\linewidth}%
				\includegraphics[width=\linewidth]{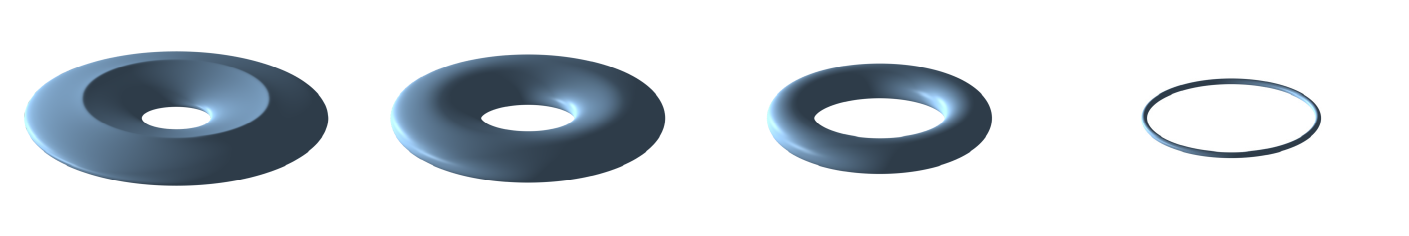}}\\
			(c) 
		\end{minipage}
		
		\caption{Evolution of an axisymmetric surface under the isotropic axisymmetric MCF
			computed by the ISO-A-BDF1-FDM.
			(a) Planar evolution of the generating curve at
			\(t = 0,\,0.4,\,1.4,\) and \(2.15\).
			The initial curve is given by
			$x(\rho,0) = 6 + 3\cos\left(2\pi\rho\right) + 0.8\cos\left(6\pi\rho\right),$ $y(\rho,0) = 1.4\sin\left(2\pi\rho\right),$
			and the final time is set to \(T = 2.15\) with time step \(\Delta t = 0.01\).
			(b) Time histories of the mesh quality indicators
			\(R_1(\Delta s)\) (top) and \(R_2(M\,\Delta s)\) (bottom).
			(c) Three-dimensional visualization of the evolving surface obtained by revolving
			the generating curve about the rotation axis at the corresponding time instants.}
		\label{fig:ISO_3D_example1}
	\end{figure}
	
\end{example}

\begin{example}
	In this example, we further investigate the evolution of an axisymmetric surface under the
	isotropic axisymmetric MCF using the ISO-A-BDF1-FDM,
	starting from a geometrically nonconvex generating curve.
	Fig.~\ref{fig:iso_3D_example2}(a) shows the planar evolution of the generating curve at
	the time instants \(t = 0,\,0.4,\,1.4,\) and \(2\).
	The initial curve exhibits pronounced nonconvex features and oscillatory geometric
	structures.
	As the evolution proceeds, these geometric oscillations and local shape variations
	are gradually attenuated. 
	The corresponding time histories of the mesh quality indicators
	\(R_1(\Delta s)\) and \(R_2(M\,\Delta s)\) are displayed in
	Fig.~\ref{fig:iso_3D_example2}(b).
	It can be observed that both indicators decrease rapidly during the initial stage
	and remain bounded for the remainder of the simulation,
	indicating that the adaptive tangential redistribution strategy effectively maintains
	a stable and well-distributed mesh throughout the evolution. 
	Finally, Fig.~\ref{fig:iso_3D_example2}(c) presents the three-dimensional evolution of the
	axisymmetric surface obtained by revolving the generating curve about the rotation axis
	at the corresponding time instants.
	The numerical results demonstrate that the ISO-A-BDF1-FDM is capable of robustly
	handling nonconvex initial geometries.
	\begin{figure}[htbp]
		\centering
		\begin{minipage}[t]{0.5\linewidth}
			\centering
			\includegraphics[width=\linewidth]{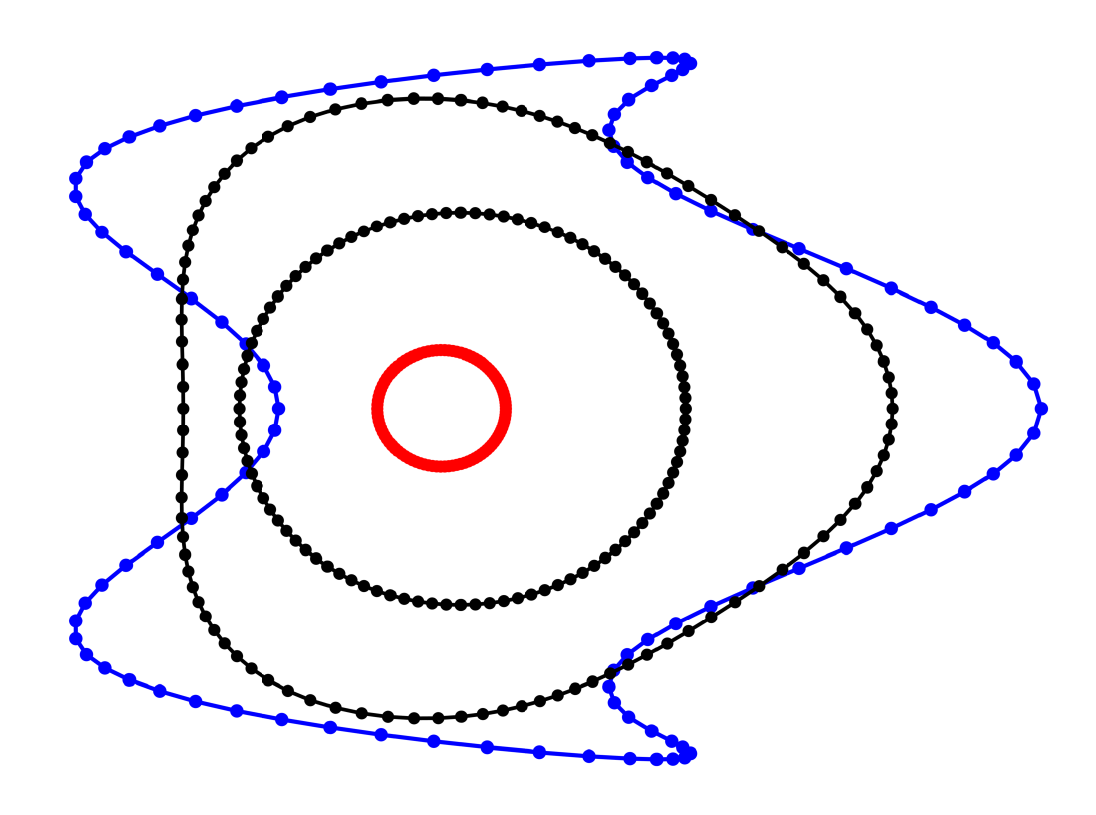}\\
			(a)
		\end{minipage}
		\hfill
		\begin{minipage}[t]{0.48\linewidth}
			\centering
			\includegraphics[width=\linewidth]{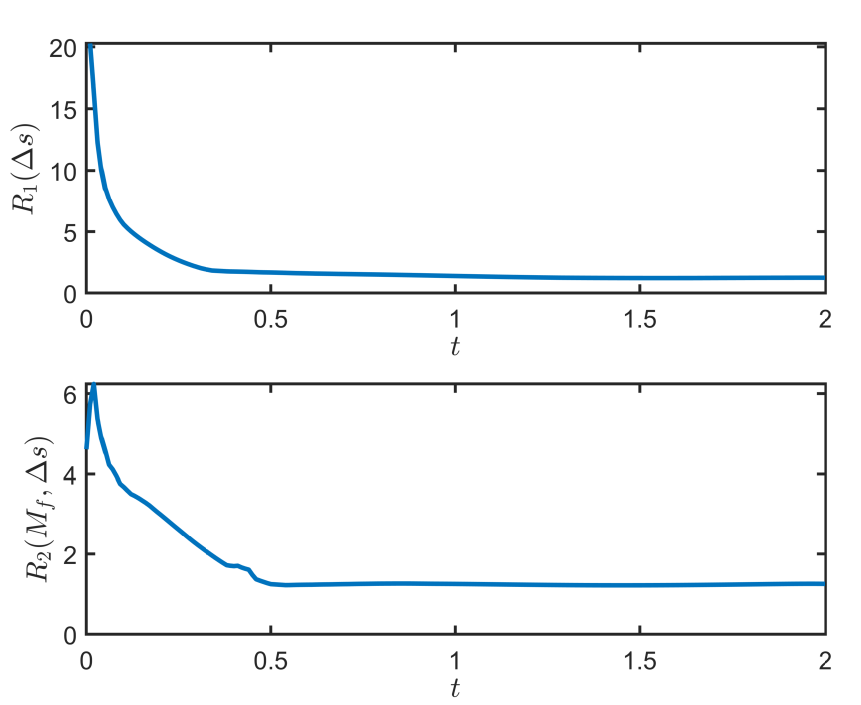}\\
			(b) 
		\end{minipage}
		
		\vspace{0.6em}
		
		\begin{minipage}[t]{1\linewidth}
			\centering
			\makebox[\linewidth][c]{%
				\hspace*{0.06\linewidth}%
				\includegraphics[width=\linewidth]{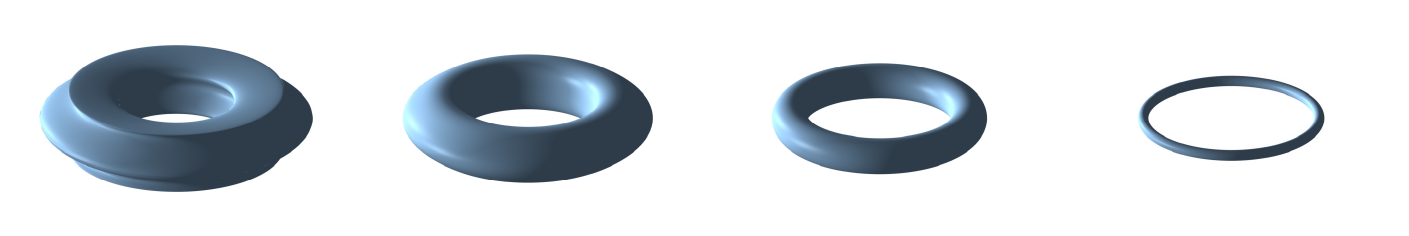}}\\
			(c) 
		\end{minipage}
		
		\caption{Evolution of an axisymmetric surface under the isotropic axisymmetric MCF
			computed by the ISO-A-BDF1-FDM.
			(a) Planar evolution of the generating curve at
			\(t = 0,\,0.4,\,1.4,\) and \(2\).
			The initial curve is given by
			$x(\rho,0) = 6 + 2\cos\!\left(2\pi\rho\right)
			+ 0.8\cos\!\left(8\pi\rho\right), $ $y(\rho,0) = 2\sin\!\left(2\pi\rho\right),$
			and the final time is set to \(T = 2\) with time step \(\Delta t = 0.02\).
			(b) Time histories of the mesh quality indicators
			\(R_1(\Delta s)\) (top) and \(R_2(M\,\Delta s)\) (bottom).
			(c) Three-dimensional visualization of the evolving surface obtained by revolving
			the generating curve about the rotation axis at the corresponding time instants.}
		\label{fig:iso_3D_example2}
	\end{figure}

\end{example}

\begin{example}
	In this example, we investigate the evolution of an axisymmetric surface under the
	four-fold anisotropic axisymmetric MCF using the
	ANISO-A-BDF1-FDM, with the anisotropy parameter set to \(\beta = 0.06\). 
	Fig.~\ref{fig:ANiso_3D_example1}(a) shows the planar evolution of the generating curve
	at the time instants \(t = 0,\,0.5,\,1.6,\) and \(2.1\).
	Due to the presence of four-fold anisotropy, the curve evolution exhibits
	direction-dependent geometric features that differ noticeably from the isotropic case. 
	The corresponding time histories of the mesh quality indicators
	\(R_1(\Delta s)\) and \(R_2(M\,\Delta s)\) are presented in
	Fig.~\ref{fig:ANiso_3D_example1}(b).
	Although the presence of anisotropy introduces additional geometric complexity,
	both indicators exhibit noticeable fluctuations during the early stage of the evolution,
	followed by an overall decreasing tendency, and eventually enter a regime with
	relatively mild temporal variations. 
	Finally, Fig.~\ref{fig:ANiso_3D_example1}(c) displays the three-dimensional evolution of
	the axisymmetric surface obtained by revolving the generating curve about the
	rotation axis at the corresponding time instants.
	The numerical results demonstrate that the proposed ANISO-A-BDF1-FDM
	can robustly simulate axisymmetric MCF with pronounced directional anisotropy.
	\begin{figure}[htbp]
		\centering
		
		\begin{minipage}[t]{0.48\linewidth}
			\centering
			\includegraphics[width=\linewidth]{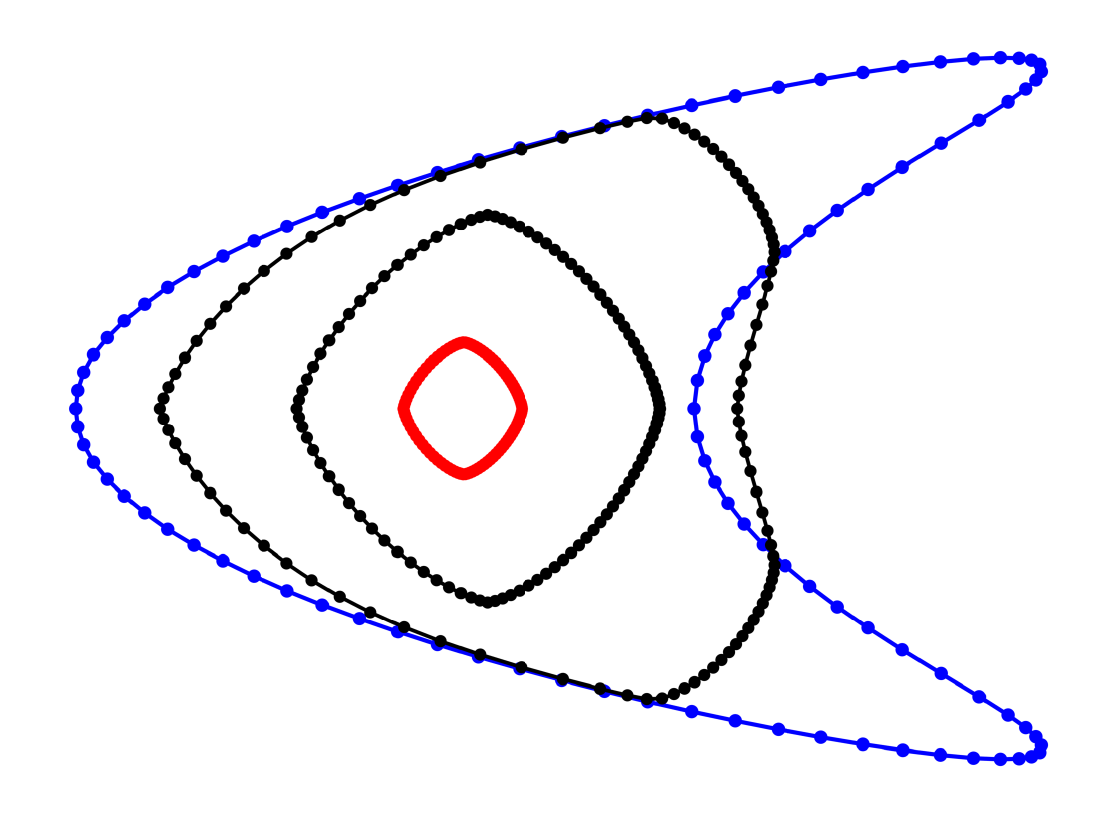}\\
			(a)
		\end{minipage}
		\hfill
		\begin{minipage}[t]{0.48\linewidth}
			\centering
			\includegraphics[width=\linewidth]{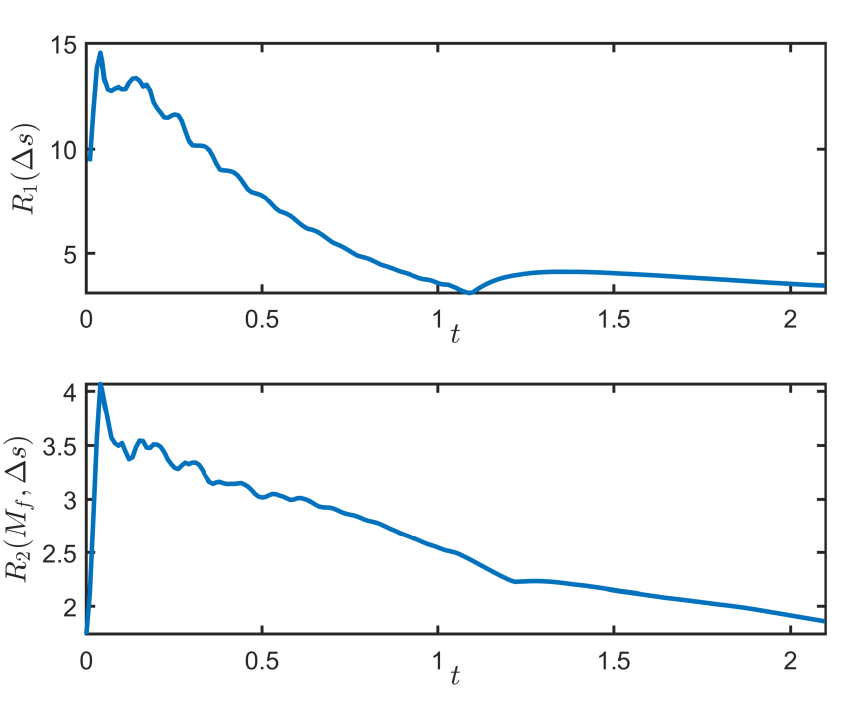}\\
			(b) 
		\end{minipage}
		
		\vspace{0.6em}
		
		\begin{minipage}[t]{1\linewidth}
			\centering
			\makebox[\linewidth][c]{%
				\hspace*{0.07\linewidth}%
				\includegraphics[width=\linewidth]{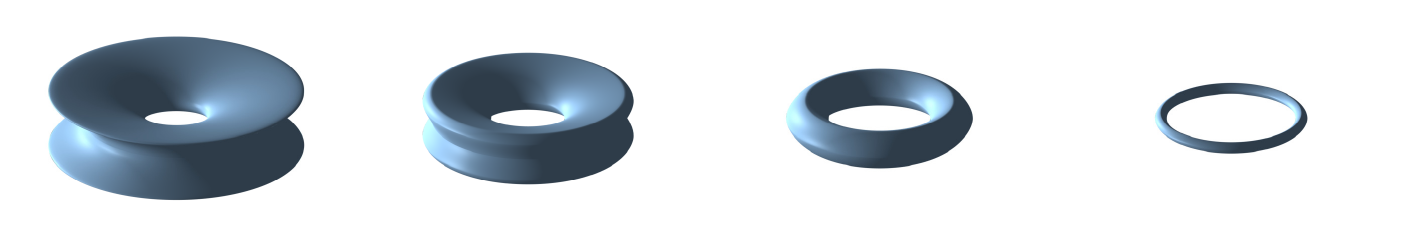}}\\
			(c) 
		\end{minipage}
		
		\caption{Evolution of an axisymmetric surface under the anisotropic axisymmetric MCF
			computed by the ANISO-A-BDF1-FDM.
			(a) Planar evolution of the generating curve at
			\(t = 0,\,0.6,\,1.6,\) and \(2.1\).
			The initial curve is given by
			$
			x(\rho,0) = 6 + 2\cos\!\left(2\pi\rho\right) - 2\cos\!\left(4\pi\rho\right),$ $y(\rho,0) = 2\sin\!\left(2\pi\rho\right),$
			and the final time is set to \(T = 2.1\) with time step \(\Delta t = 0.01\).
			(b) Time histories of the mesh quality indicators
			\(R_1(\Delta s)\) (top) and \(R_2(M\,\Delta s)\) (bottom).
			(c) Three-dimensional visualization of the evolving surface obtained by revolving
			the generating curve about the symmetry axis at the corresponding time instants.}
		\label{fig:ANiso_3D_example1}
	\end{figure}
\end{example}

\begin{example}
	In this example, we consider the evolution of an axisymmetric surface under the
	anisotropic axisymmetric MCF computed by the ANISO-A-BDF1-FDM.
	The anisotropy is characterized by a four-fold structure with the anisotropy parameter set to \(\beta = 0.06\).
	Fig.~\ref{fig:ANiso_3D_example2}(a) presents the planar evolution of the generating curve
	at the time instants \(t = 0,\,0.8,\,1.6,\) and \(2.1\).
	The initial curve contains multiple oscillatory modes and exhibits pronounced geometric
	undulations.
	During the evolution, these features interact with the anisotropic curvature effects,
	leading to a visibly direction-dependent deformation of the curve. 
	The corresponding time histories of the mesh quality indicators
	\(R_1(\Delta s)\) and \(R_2(M\,\Delta s)\) are shown in
	Fig.~\ref{fig:ANiso_3D_example2}(b).
	Both indicators display non-monotone behavior with noticeable fluctuations at the early
	stage, followed by a transition to a regime with comparatively mild temporal variations.
	This behavior reflects the interplay between geometric complexity and anisotropic effects
	during the evolution. 
	Finally, Fig.~\ref{fig:ANiso_3D_example2}(c) shows the three-dimensional evolution of the
	axisymmetric surface obtained by revolving the generating curve about the rotation axis
	at the corresponding time instants.
	The numerical results illustrate that the ANISO-A-BDF1-FDM is capable of robustly capturing
	anisotropy-induced geometric features for initial curves with multiple oscillatory modes.
	\begin{figure}[htbp]
		\centering
		
		\begin{minipage}[t]{0.48\linewidth}
			\centering
			\includegraphics[width=\linewidth]{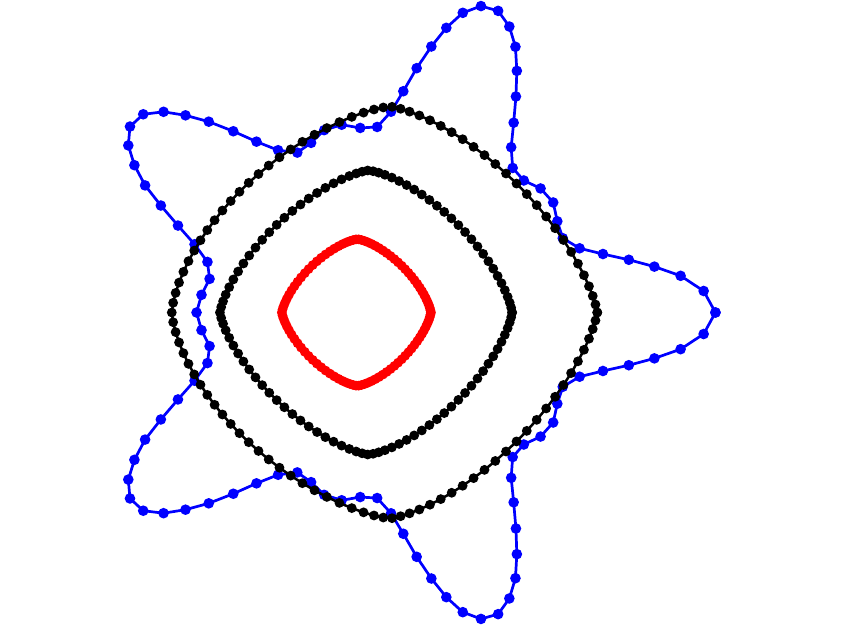}\\
			(a)
		\end{minipage}
		\hfill
		\begin{minipage}[t]{0.48\linewidth}
			\centering
			\includegraphics[width=\linewidth]{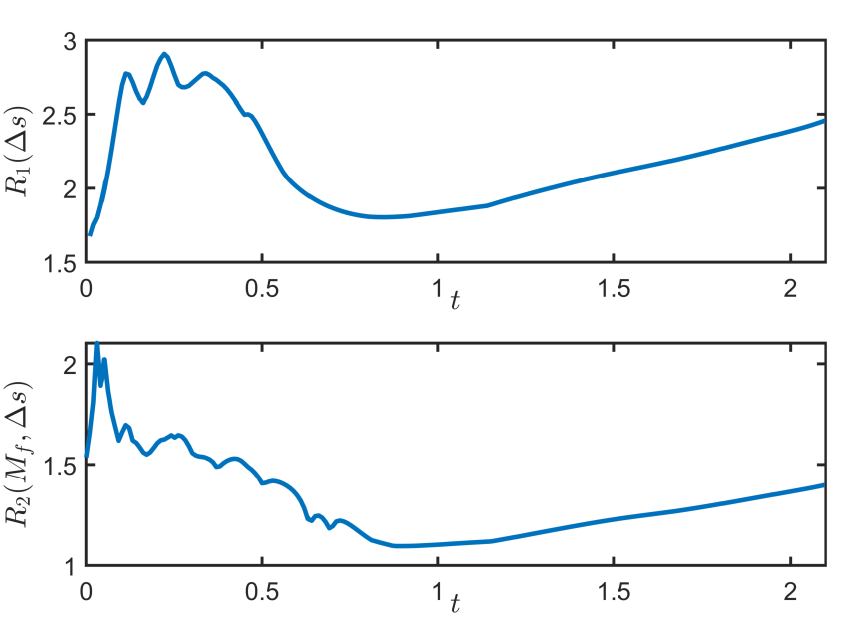}\\
			(b) 
		\end{minipage}
		
		\vspace{0.6em}
		
		\begin{minipage}[t]{1\linewidth}
			\centering
			\makebox[\linewidth][c]{%
				\hspace*{0.06\linewidth}%
				\includegraphics[width=\linewidth]{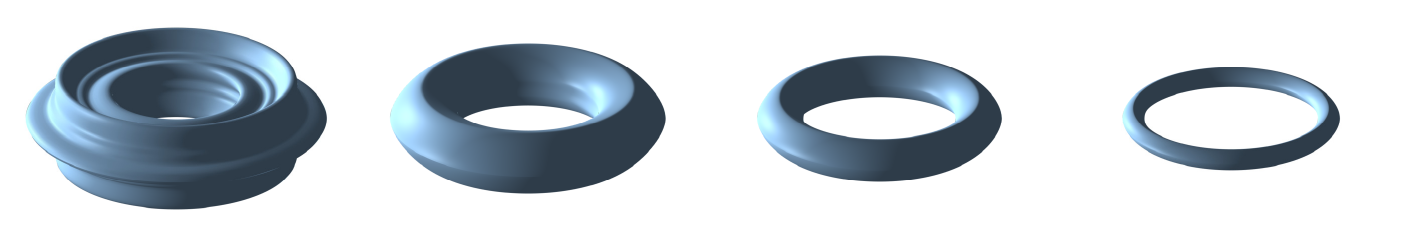}%
			}\\
			(c) 
		\end{minipage}
		
		\caption{Evolution of an axisymmetric surface under the anisotropic axisymmetric MCF
			computed by the ANISO-A-BDF1-FDM.
			(a) Planar evolution of the generating curve at
			\(t = 0,\,0.8,\,1.6,\) and \(2.1\).
			The initial curve is given by
			$x(\rho,0) = 6 + \left[2 + 0.55\cos\!\left(10\pi\rho\right)
			+ 0.25\cos\!\left(20\pi\rho\right)\right]\cos\!\left(2\pi\rho\right),$ 
			$y(\rho,0) = \left[2 + 0.55\cos\!\left(10\pi\rho\right)
			+ 0.25\cos\!\left(20\pi\rho\right)\right]\sin\!\left(2\pi\rho\right),$
			and the final time is set to \(T = 2.1\) with time step \(\Delta t = 0.01\).
			(b) Time histories of the mesh quality indicators
			\(R_1(\Delta s)\) (top) and \(R_2(M\,\Delta s)\) (bottom).
			(c) Three-dimensional visualization of the evolving surface obtained by revolving
			the generating curve about the symmetry axis at the corresponding time instants.}
		\label{fig:ANiso_3D_example2}
	\end{figure}
	
\end{example}

\section{Conclusions}\label{sec6}
In this work, we have developed adaptive moving mesh methods for the numerical
simulation of axisymmetric MCFs in both isotropic and anisotropic
settings.
The methods are formulated within the mesh equidistribution framework, in which
a tangential velocity driven by carefully designed monitor functions is introduced
to dynamically redistribute mesh points during the evolution.
The proposed monitor functions, incorporating curvature, its arc-length derivative,
and squared curvature, provide effective control of mesh quality and enable accurate
resolution of evolving geometric features.
The resulting numerical schemes are discretized in space using central finite
differences and in time by first- and second-order schemes, including the BDFk
($k=1,2$) and Crank-Nicolson methods.
By incorporating a Lagrange multiplier approach, the adaptive system preserves the
underlying geometric constraint and leads to energy-stable fully discrete schemes.
Numerical experiments confirm the convergence and energy stability of the proposed
methods.
More importantly, the results highlight the clear advantages of the adaptive strategy
in complex geometric evolutions.
Through dynamic mesh redistribution, the adaptive methods effectively capture
localized geometric features, prevent mesh degeneration, and significantly improve
numerical accuracy.
In particular, for anisotropic curvature evolution problems, the ability to maintain
good mesh quality enables stable and reliable long-time numerical simulations.
The proposed framework is flexible and can be extended to other geometric evolution
problems where anisotropy and strong curvature effects play a significant role.

\bibliographystyle{elsarticle-num}
\bibliography{ref}
\end{document}